\def\csname ver@fixltx2e.sty\endcsname{}
\newlength{\dhatheight}
\newcommand{\doublehat}[1]{%
    \settoheight{\dhatheight}{\ensuremath{\hat{#1}}}%
    \addtolength{\dhatheight}{-0.25ex}%
    \hat{\vphantom{\rule{1pt}{\dhatheight}}%
    \smash{\hat{#1}}}}
\newcolumntype{P}[1]{>{\centering\arraybackslash}p{#1}}
\newtheorem{theorem}{Theorem}
\newtheorem{definition}{Definition}
\newtheorem{lemma}{Lemma}
\newtheorem{remark}{Remark}
\def\BibTeX{{\rm B\kern-.05em{\sc i\kern-.025em b}\kern-.08em
    T\kern-.1667em\lower.7ex\hbox{E}\kern-.125emX}}
\begin{document}

\title{Inverse Extended Kalman Filter --- Part I: Fundamentals}

\author{Himali Singh, Arpan Chattopadhyay$^\ast$ and Kumar Vijay Mishra$^\ast$\vspace{-24pt}
\thanks{$^\ast$A. C. and K. V. M. have made equal contributions.}
\thanks{H. S. and A. C. are with the Electrical Engineering Department, Indian Institute of Technology (IIT) Delhi, India. A. C. is also associated with the Bharti School of Telecommunication Technology and Management, IIT Delhi. Email: \{eez208426, arpanc\}@ee.iitd.ac.in.} 
\thanks{K. V. M. is with the United States DEVCOM Army Research Laboratory, Adelphi, MD 20783 USA. E-mail: kvm@ieee.org.}
%\thanks{This paper has supplementary downloadable material available at \textit{http://ieeexplore.ieee.org.}, provided by the author. The material includes detailed derivation of forward EKF-without-DF recursions. This material is XXXX in size.}
\thanks{A. C. acknowledges support via the professional development fund and professional development  allowance from IIT Delhi, grant no. GP/2021/ISSC/022 from I-Hub Foundation for Cobotics and grant no. CRG/2022/003707 from Science and Engineering Research Board (SERB), India. H. S. acknowledges support via Prime Minister Research Fellowship. K. V. M. acknowledges support from the National Academies of Sciences, Engineering, and Medicine via Army Research Laboratory Harry Diamond Distinguished Fellowship.}
\thanks{The conference precursor of this work has been published in the 2022 Asilomar Conference on Signals, Systems, and Computers.}
}

\maketitle

\begin{abstract}
Recent advances in counter-adversarial systems have garnered significant research attention to inverse filtering from a Bayesian perspective. For example, interest in estimating the adversary’s Kalman filter tracked estimate with the purpose of predicting the adversary's future steps has led to recent formulations of \textit{inverse Kalman filter} (I-KF). In this context of inverse filtering, we address the key challenges of non-linear process dynamics and unknown input to the forward filter by proposing an \textit{inverse extended Kalman filter} (I-EKF). The purpose of this paper and the companion paper (Part II) is to develop the theory of I-EKF in detail. In this paper, we assume perfect system model information and derive I-EKF with and without an unknown input when both forward and inverse state-space models are non-linear. In the process, I-KF-with-unknown-input is also obtained. We then provide theoretical stability guarantees using both bounded non-linearity and unknown matrix approaches and prove the I-EKF's consistency. Numerical experiments validate our methods for various proposed inverse filters using the recursive Cram\'{e}r-Rao lower bound as a benchmark. In the companion paper (Part II), we further generalize these formulations to highly non-linear models and propose reproducing kernel Hilbert space-based EKF to handle incomplete system model information.
\end{abstract}

\begin{IEEEkeywords}
Bayesian filtering, counter-adversarial systems, extended Kalman filter, inverse filtering, non-linear processes.
\end{IEEEkeywords}

\vspace{-8pt}
\section{Introduction}
\label{sec:introduction}
In many engineering applications, it is desired to infer the parameters of a filtering system by observing its output. This \textit{inverse filtering} is useful in applications such as system identification, fault detection, image deblurring, and signal deconvolution \cite{idier2013bayesian,gustafsson2007statistical}. Conventional inverse filtering is limited to non-dynamic systems. However, applications such as cognitive and counter-adversarial systems \cite{haykin2006cognitive,bell2015cognitive,mattila2020inverse} have recently been shown to require designing the inverse of classical stochastic filters such as hidden Markov model (HMM) filter \cite{elliott2008hidden} and Kalman filter (KF) \cite{kalman1960new}. The cognitive systems are intelligent units that sense the environment, learn relevant information about it, and then adapt themselves in real-time to optimally enhance their performance. For example, a cognitive radar \cite{mishra2020toward} adapts both transmitter and receiver processing in order to achieve desired goals such as improved target detection \cite{mishra2017performance} and tracking \cite{bell2015cognitive,sharaga2015optimal}. In this context, \cite{krishnamurthy2019how} recently introduced \textit{inverse cognition}, in the form of inverse stochastic filters, to detect cognitive sensor and further estimate the information that the same sensor may have learnt. In this two-part paper, we focus on inverse stochastic filtering for such inverse cognition applications.

At the heart of inverse cognition are two agents: `defender' (e.g., an intelligent target) and an `adversary' (e.g., a sensor or radar) equipped with a Bayesian tracker. The adversary infers an estimate of the defender's kinematic state and cognitively adapts its actions based on this estimate. The defender observes adversary's actions with the goal to predict its future actions in a Bayesian sense. % in order to escape detection by a cognitive sensor. 
In particular, \cite{krishnamurthy2020identifying} developed stochastic revealed preferences-based algorithms to ascertain if the adversary's actions are consistent with optimizing a utility function; and if so, estimate that function. On the other hand, \cite{krishnamurthy2021adversarial,kang2023} deal with smart interference design to force an adversary to change its actions. The motivation for the problem lies in developing counter-adversarial systems. For instance, an intelligent target can sense the cognitive radar's waveform adaptations and employ an inverse filter to infer the radar's estimate of its state\cite{krishnamurthy2019how}. Similar examples abound in interactive learning\cite{krishnamurthy2019how}, fault diagnosis, cyber-physical security\cite{mattila2017inverse}, and inverse reinforcement learning\cite{ng2000algorithms}.

If the defender aims to guard against the adversary's future actions, it requires an estimate of the adversary's inference. This is precisely the objective of inverse Bayesian filtering. In (forward) Bayesian filtering, given noisy observations, a posterior distribution of the underlying state is obtained. An example is the KF, which provides optimal estimates of the underlying state in linear system dynamics with Gaussian measurement and process noises. The inverse filtering problem, on the other hand, is concerned with estimating this posterior distribution of a Bayesian filter given the noisy measurements of the posterior. An example of such a system is the recently introduced inverse Kalman filter (I-KF) \cite{krishnamurthy2019how}. Note that, historically, the Wiener filter -- a special case of KF when the process is stationary -- has long been used for frequency-domain inverse filtering for deblurring in image processing \cite{biemond1990iterative}. Further, some early works \cite{kalman1964linear} have investigated the inverse problem of finding cost criterion for a control policy. 

Although KF and its continuous-time variant Kalman-Bucy filter \cite{kalman1961new} are highly effective in many practical applications, they are optimal for only linear and Gaussian models. In practice, many engineering problems involve non-linear processes \cite{haykin2004kalman,simon2006optimal}. In these cases, a \textit{linearized KF} is used, wherein the states of a linear system represent the deviations from a nominal trajectory of a non-linear system. The KF estimates the deviations from the nominal trajectory and obtains an estimate of the states of the non-linear system. The linearized KF is extended to directly estimate the states of a non-linear system in the extended KF (EKF) \cite{schmidt1966application}. The linearization is %of the non-linear functions 
locally at the state estimates through Taylor series expansion. This is very similar to the Volterra series filters  \cite{zaknich2005principles} that are non-linear counterparts of adaptive linear filters. Besides the traditional state estimation applications, EKF has also been considered in learning applications like dual and joint estimation of state and parameters\cite{haykin2004kalman}, parameter optimization for fuzzy logic systems\cite{khanesar2011extended} and training neural networks\cite{simon2002training,wang2011convergence}. The EKF is further connected to the general approximate Bayesian inference approaches in machine learning, wherein the EKF may be viewed as a member of a general class of Gaussian filters that assume a Gaussian distributed conditional probability density. The mean and covariance of the assumed density are then updated recursively using the observations. These Gaussian filtering approaches are a special case of assumed density filtering (ADF)\cite{maybeck1982stochastic} or online Bayesian learning\cite{opper1999bayesian}, which sequentially computes the approximate posterior distribution of the underlying state. Expectation propagation is a further extension of ADF where new observations are also used to refine the previous approximations iteratively \cite{minka2013expectation}.

While inverse non-linear filters have been studied for adaptive systems in some previous works \cite{broomhead1996nonlinear,shen2001robust}, the inverse of non-linear stochastic filters such as EKF remain unexamined so far. To address the aforementioned non-linear inverse cognition scenarios, contrary to prior works which focus on only linear I-KF \cite{krishnamurthy2019how}, our goal is to derive and analyze inverse EKF (I-EKF). Note that the I-EKF is different from the \textit{inversion of EKF} \cite{zhengyu2021iterated}, which may not take the same form as EKF, is employed on the adversary's side, and is unrelated to our inverse cognition problem. Similarly, the non-linear extended information filter (EIF) proposed in \cite{mutambara1999information} used inverse of covariance matrix and was compared with KF for estimation of the same states. Our inverse EKF has a different formulation that is focused on estimating the inference of an adversary who is also using an EKF to estimate the defender's state. Further, the adversary does not attempt to hide its strategy from the defender, which is a more challenging problem recently addressed in \cite{lourencco2020protect,lourencco2021hidden}. If the adversary also guards itself against the defender, the adversary-defender interaction then requires an inverse-inverse reinforcement learning-based representation of the problem, which is not the focus of our current work and has been addressed in other recent works \cite{pattanayak2022inverse,pattanayak2022meta}.

Preliminary results of this work appeared in our conference publication \cite{singh2022inverse}, where only I-EKF-without-unknown-inputs was formulated. In this paper, we present inverses of many other EKF formulations for systems with unknown inputs and provide their stability analyses. The companion paper (Part II) \cite{singh2022inverse_part2} further develops the I-EKF theory for highly non-linear systems where first-order EKF does not sufficiently addresses the linear approximation. Our main contributions in this paper (Part I) are:\\
\textbf{1) I-KF and I-EKF with unknown inputs.} In the inverse cognition scenario, the target may introduce additional motion or jamming that is known to the target but not to the adversarial cognitive sensor. In this context, while deriving I-EKF, we consider a more general non-linear system model with unknown input. Unknown inputs refer to exogenous excitations to the system which affect the state transition and observations but are not known to the agent employing the stochastic filter. In the process, we also obtain I-KF-with-unknown-input that was not examined in the I-KF developed in \cite{krishnamurthy2019how}. Here, similar to the inverse cognition frameworks investigated in \cite{krishnamurthy2019how,mattila2020inverse}, we assume that the adversary's filter is known to the defender. In the companion paper (Part II) \cite{singh2022inverse_part2}, we consider the case when no prior information about the adversary's filter is available.\\
\textbf{2) Augmented states for I-EKF.} For systems with unknown inputs, the adversary's state estimate depends on its estimate of the unknown input. As a result, the adversary's forward filters vary with system models. We overcome this challenge by considering augmented states in the inverse filter so that the unknown input estimation is performed jointly with state estimation, including for KF with direct feed-through. For different inverse filters, separate augmented states are considered depending on the state transitions for the inverse filter.\\
\textbf{3) Stability of I-EKF.} The treatment of linear filters includes filter stability and model error sensitivity. But, in general, stability and convergence results for non-linear KFs, and more so for their inverses, are difficult to obtain. In this work, we show the stability of I-EKF using two techniques. The first approach is based on bounded non-linearities, which has been earlier employed for proving stochastic stability of discrete-time \cite{reif1999stochastic} and continuous-time \cite{reif2000continuousEKFstability} EKFs. Here, the estimation error was shown to be exponentially bounded in the mean-squared sense. The second method relaxes the bound on the initial estimation error by introducing unknown matrices to model the linearization errors \cite{xiong2006performance_ukf}. Besides providing the sufficient conditions for error boundedness, this approach also rigorously justifies the enlarging of the noise covariance matrices to stabilize the filter \cite{wu2007comments}. Since the I-EKF's error dynamics depends on the forward filter's recursive updates, the derivations of these theoretical guarantees are not straightforward. In the process, we also obtain novel stability results for forward EKF using the unknown matrix approach. We further show the consistency of I-EKF's estimates. We validate the estimation errors of all inverse filters through extensive numerical experiments with recursive Cram\'{e}r-Rao lower bound (RCRLB) \cite{tichavsky1998posterior} as the performance metric.

The rest of the paper is organized as follows. In the next section, we provide the background of inverse cognition model. The inverse EKF with unknown input is then derived in Section~\ref{sec:ekfunknown} for the case of the forward EKF with and without direct feed-through. Here, we also obtain the standard I-EKF in the absence of unknown input. Then, similar cases are considered for inverse KF with unknown input in Section~\ref{sec:kfunknown}. We then derive the stability conditions in Section~\ref{sec:stability}. In Section~\ref{sec:simulations}, we corroborate our results with numerical experiments before concluding in Section~\ref{sec:summary}.

Throughout the paper, we reserve boldface lowercase and uppercase letters for vectors (column vectors) and matrices, respectively. The transpose operation and $l_{2}$ norm (for a vector) are denoted by $(\cdot)^T$ and $||\cdot||_{2}$, respectively. The notation $\textrm{Tr}(\mathbf{A})$, $\textrm{rank}(\mathbf{A})$, and $||\mathbf{A}||$, respectively, denote the trace, rank, and spectral norm of $\mathbf{A}$. For matrices $\mathbf{A}$ and $\mathbf{B}$, the inequality $\mathbf{A}\preceq\mathbf{B}$ means that $\mathbf{B}-\mathbf{A}$ is a positive semidefinite (p.s.d.) matrix. For a function $f:\mathbb{R}^{n}\rightarrow\mathbb{R}^{m}$, $\nabla f$ denotes the $\mathbb{R}^{m \times n}$ Jacobian matrix. Similarly, for a function $f:\mathbb{R}^{n}\rightarrow\mathbb{R}$, $\nabla f$ denote the gradient vector ($\mathbb{R}^{n\times 1}$). A $n\times n$ identity matrix is denoted by $\mathbf{I}_{n}$ and a $n\times m$ all zero matrix is denoted by $\mathbf{0}_{n\times m}$. The notation $\lbrace a_{i}\rbrace_{i_{1}\leq i\leq i_{2}}$ denotes a set of elements indexed by integer $i$. %with $i_{1}\leq i\leq i_{2}$ defining the range of index $i$. %Furthermore, $\mathbf{I}$ and $\mathbf{0}$ denote an identity and all zeros matrices, respectively, of appropriate dimensions.
The notation $\mathbf{x} \sim \mathcal{N}(\boldsymbol{\mu},\mathbf{Q})$ and $x \sim \mathcal{U}[u_l,u_u]$, respectively, represent a random variable drawn from a normal distribution with mean  $\boldsymbol{\mu}$ and covariance matrix $\mathbf{Q}$, and the uniform distribution over $[u_l,u_u]$.

\vspace{-8pt}
\section{Desiderata for Inverse Cognition}
\label{sec:background}
Consider a discrete-time stochastic dynamical system as the defender's state evolution process $\{\mathbf{x}_k\}_{k \geq 0}$, where $\mathbf{x}_k \in \mathbb{R}^{n \times 1}$ is the state at the $k$-th time instant. The defender perfectly knows its current state $\mathbf{x}_{k}$. The control input $\mathbf{u}_k \in \mathbb{R}^{m \times 1}$ is known to the defender but not to the adversary. In a linear state-space model, we denote the state-transition and control input matrices by $\mathbf{F} \in \mathbb{R}^{n\times n}$ and $\mathbf{B} \in \mathbb{R}^{n\times m}$, respectively. The defender's state evolves as
\par\noindent\small
\begin{align}
\mathbf{x}_{k+1}=\mathbf{Fx}_{k}+\mathbf{Bu}_{k}+\mathbf{w}_{k},\label{eqn: linear x with input}
\end{align}
\normalsize
where $\mathbf{w}_{k}\sim\mathcal{N}(\mathbf{0}_{n\times 1},\mathbf{Q})$ is the process noise with covariance matrix $\mathbf{Q} \in \mathbb{R}^{n\times n}$. At the adversary, the observation and control input matrices are given by $\mathbf{H} \in \mathbb{R}^{p\times n}$ and $\mathbf{D} \in \mathbb{R}^{p\times m}$, respectively. The adversary makes a noisy observation $\mathbf{y}_k \in \mathbb{R}^{p \times 1}$ at time $k$ as
\par\noindent\small
\begin{align}
\mathbf{y}_{k}=\mathbf{Hx}_{k}+\mathbf{Du}_{k}+\mathbf{v}_{k},
\label{eqn: linear y withdf}
\end{align}
\normalsize
where $\mathbf{v}_{k}\sim\mathcal{N}(\mathbf{0}_{p\times 1},\mathbf{R})$ is the adversary's measurement noise with covariance matrix $\mathbf{R} \in \mathbb{R}^{p\times p}$.

The adversary uses $\{\mathbf{y}_j\}_{1 \leq j \leq k}$ to compute the estimate $\hat{\mathbf{x}}_k$ of the defender's state $\mathbf{x}_{k}$ using a (forward) stochastic filter. The adversary then uses this estimate to administer an action matrix $\mathbf{G} \in \mathbb{R}^{n_a \times n}$ on $\hat{\mathbf{x}}_{k}$. The defender makes noisy observations of this action as
\par\noindent\small
\begin{align}
\mathbf{a}_{k}=\mathbf{G}\hat{\mathbf{x}}_{k}+\bm{\epsilon}_{k}\;\; \in \mathbb{R}^{n_{a} \times 1},
\label{eqn: linear a}
\end{align}
\normalsize
where $\bm{\epsilon}_{k}\sim \mathcal{N}(\mathbf{0}_{n_{a}\times 1},\bm{\Sigma_{\epsilon}})$ is the defender's measurement noise with covariance matrix $\bm{\Sigma}_{\epsilon} \in \mathbb{R}^{n_{a}\times n_{a}}$. Finally, the defender uses $\{\mathbf{a}_j, \mathbf{x}_j,\mathbf{u}_{j}\}_{1 \leq j \leq k}$ to compute the estimate $\doublehat{\mathbf{x}}_k \in \mathbb{R}^{n \times 1}$ of $\hat{\mathbf{x}}_k $ in the (inverse) stochastic filter. Define $\hat{\mathbf{u}}_{k}$ to be the estimate of $\mathbf{u}_k$ as computed in the adversary's forward filter, while $\doublehat{\mathbf{u}}_{k}$ is an estimate of $\hat{\mathbf{u}}_{k}$ as computed by the defender's inverse filter. The noise processes $\{\mathbf{w}_{k}\}_{k \geq 0}$, $\{\mathbf{v}_{k}\}_{k \geq 1}$ and $\{\bm{\epsilon}_{k}\}_{k \geq 1}$ are mutually independent and i.i.d. across time. These noise distributions are known to the defender as well as the adversary. The adversary and defender are entirely different agents employing independent sensors to observe each other. Furthermore, in the inverse filtering problem, the adversary is unaware that the defender is observing the former. Hence, the defender's measurements noise $\bm{\epsilon}_{k}$ is independent of the adversary's state estimate and the administered action. When the unknown input is absent, either $\mathbf{B}=\mathbf{0}_{n \times m}$ or $\mathbf{D}=\mathbf{0}_{p \times m}$ or both vanish. Throughout the paper, we assume that both parties (adversary and defender) have perfect knowledge of the system model and parameters. Additionally, the defender is assumed to know the forward filter employed by the adversary. The companion paper (Part II) \cite{singh2022inverse_part2} considers the case when this perfect knowledge is not available and also, numerically analyzes the mismatched forward and inverse filters case. In particular, the proposed inverse filters provide reasonably accurate estimates even when the defender assumes an incorrect forward filter. In some cases, a sophisticated inverse filter may even provide better estimates.

When the system dynamics are non-linear, then the matrix pairs $\{\mathbf{F, B}\}$, $\{\mathbf{H, D}\}$, and the matrix $\mathbf{G}$ are replaced by non-linear functions $f(\cdot, \cdot)$, $h(\cdot,\cdot)$, and $g(\cdot)$, respectively, as
\par\noindent\small
\begin{align}
\mathbf{x}_{k+1}&=f(\mathbf{x}_{k},\mathbf{u}_{k})+\mathbf{w}_{k},\label{eqn: non x with input}\\
\mathbf{y}_{k}&=h(\mathbf{x}_{k},\mathbf{u}_{k})+\mathbf{v}_{k},\label{eqn: non y withdf}\\
\mathbf{a}_{k}&=g(\hat{\mathbf{x}}_{k})+\bm{\epsilon}_{k}.\label{eqn: non a}
\end{align}
\normalsize
This is a \textit{direct feed-through} (DF) model, wherein $\mathbf{y}_{k}$ depends on the unknown input. Without DF, observations \eqref{eqn: non y withdf} becomes
\par\noindent\small
\begin{align}
\mathbf{y}_{k}=h(\mathbf{x}_{k})+\mathbf{v}_{k}.\label{eqn: non y withoutdf}
\end{align}
\normalsize

We show in the following Section~\ref{sec:ekfunknown}, the presence or absence of the unknown input leads to different solution approaches towards forward  and inverse filters. For simplicity, the presence of known exogenous inputs is also ignored in state evolution and observations. However, it is trivial to extend the inverse filters developed in this paper for these modifications in the system model. Throughout the paper, we focus on discrete-time models.

\vspace{-8pt}
\section{I-EKF with Unknown Input}
\label{sec:ekfunknown} 
One of the earliest approaches to treat the unknown input was to model the inputs as a stochastic process with known evolution dynamics and jointly estimate the state and inputs. Relaxing the known input dynamics assumption, \cite{kitanidis1987unbiased,gillijns2007unknownkf,gillijns2007kfb,zhang2022boundedness} developed and analyzed unbiased minimum variance linear filters with unknown inputs. Recently, \cite{marco2022regularized,kong2021kalman} have also considered non-persistent and norm-constrained unknown input estimation in linear systems. Various EKF variants to handle unknown inputs in non-linear systems have also been proposed\cite{yang2007adaptive,pan2010applying,xiao2018adaptive,meyer2020unknown,kim2020simultaneous}. We consider a more general EKF with unknown inputs based on a weighted least squared error criterion in case of both without \cite{pan2010applying} and with \cite{yang2007adaptive} DF. We do not make any other assumption on the inputs.

The EKF linearizes the model about the nominal values of the state vector and control input. It is similar to the iterated least squares (ILS) method except that the former is for dynamical systems and the latter is not \cite{mendel1995lessons}.
\begin{remark}\label{remark:with and without diff}
Note that the optimal forward EKFs with and without DF are conceptually different. In the latter case, while the observation $\mathbf{y}_{k}$ is unaffected by the unknown input $\mathbf{u}_{k}$, it is still dependent on $\mathbf{u}_{k-1}$ through $\mathbf{x}_k$; this induces a one-step delay in the adversary's estimate of $\mathbf{u}_k$. On the other hand, with DF, % $\mathbf{y}_{k}$ contains information about   $\mathbf{u}_{k}$, and hence 
there is no such delay in estimating $\mathbf{u}_k$. %Consequently, input estimation in the two forward filters are different, which 
\end{remark}

We now show that this difference results in different inverse filters for these two cases.

\subsection{I-EKF-without-DF unknown input}
\label{subsec:ekfwithoutdf}
Consider the non-linear system without DF given by \eqref{eqn: non x with input} and \eqref{eqn: non y withoutdf}. Linearize the model functions as $\mathbf{F}_{k}\doteq\nabla_{\mathbf{x}}f(\mathbf{x},\hat{\mathbf{u}}_{k-1})|_{\mathbf{x}=\hat{\mathbf{x}}_{k}}$, $\mathbf{B}_{k} \doteq\nabla_{\mathbf{u}}f(\hat{\mathbf{x}}_{k},\mathbf{u})|_{\mathbf{u}=\hat{\mathbf{u}}_{k-1}}$ and $\mathbf{H}_{k+1}\doteq\nabla_{\mathbf{x}}h(\mathbf{x})|_{\mathbf{x}=\hat{\mathbf{x}}_{k+1|k}}$. 
\subsubsection{Forward filter}\label{subsubsec:forward EKF without DF}
The forward filter's recursive state estimation procedure first obtains the prediction $\hat{\mathbf{x}}_{k+1|k}$ of the current state using the previous state and input estimates, with $\bm{\Sigma}^{x}_{k+1|k}$ as the associated state prediction error covariance matrix of $\hat{\mathbf{x}}_{k+1|k}$. Then, the state and input gain matrices $\mathbf{K}^{x}_{k+1}$ and $\mathbf{K}^{u}_{k}$, respectively, are computed along with the input estimation (with delay) covariance matrix $\bm{\Sigma}^{u}_{k}$. Finally, the state $\hat{\mathbf{x}}_{k+1}$, input $\hat{\mathbf{u}}_{k}$, and covariance matrix $\bm{\Sigma}^{x}_{k+1}$ are updated using current observation $\mathbf{y}_{k+1}$, and gain matrices $\mathbf{K}^{x}_{k+1}$ and $\mathbf{K}^{u}_{k}$. Note that the current observation $\mathbf{y}_{k+1}$ provides an estimate $\hat{\mathbf{u}}_{k}$ of the input $\mathbf{u}_{k}$ at the previous time step. The adversary's forward EKF's recursions are\cite{pan2010applying}:
\par\noindent\small
\begin{align}
&\textit{Prediction:}\;\hat{\mathbf{x}}_{k+1|k}=f(\hat{\mathbf{x}}_{k},\hat{\mathbf{u}}_{k-1}),\label{eqn: ekfwithoutdf predict}\\
&\textit{Gain computation:}\;\bm{\Sigma}^{x}_{k+1|k}=\mathbf{F}_{k}\bm{\Sigma}^{x}_{k}\mathbf{F}_{k}^{T}+\mathbf{Q},\nonumber\\
&\mathbf{K}^{x}_{k+1}=\bm{\Sigma}^{x}_{k+1|k}\mathbf{H}_{k+1}^{T}\left(\mathbf{H}_{k+1}\bm{\Sigma}^{x}_{k+1|k}\mathbf{H}_{k+1}^{T}+\mathbf{R}\right)^{-1},\nonumber\\
&\bm{\Sigma}^{u}_{k}=\left(\mathbf{B}_{k}^{T}\mathbf{H}_{k+1}^{T}\mathbf{R}^{-1}(\mathbf{I}_{p\times p}-\mathbf{H}_{k+1}\mathbf{K}^{x}_{k+1})\mathbf{H}_{k+1}\mathbf{B}_{k}\right)^{-1},\nonumber\\
&\mathbf{K}^{u}_{k}=\bm{\Sigma}^{u}_{k}\mathbf{B}_{k}^{T}\mathbf{H}_{k+1}^{T}\mathbf{R}^{-1}(\mathbf{I}_{p\times p}-\mathbf{H}_{k+1}\mathbf{K}^{x}_{k+1}),\nonumber\\
&\textit{Update:}\;\hat{\mathbf{x}}_{k+1}=\hat{\mathbf{x}}_{k+1|k}+\mathbf{K}^{x}_{k+1}(\mathbf{y}_{k+1}-h(\hat{\mathbf{x}}_{k+1|k})),\label{eqn: ekfwithoutdf update x}\\
&\hat{\mathbf{u}}_{k}=\mathbf{K}^{u}_{k}(\mathbf{y}_{k+1}-h(\hat{\mathbf{x}}_{k+1|k})+\mathbf{H}_{k+1}\mathbf{B}_{k}\hat{\mathbf{u}}_{k-1}),\label{eqn: ekfwithoutdf update u}\\
&\textit{Covariance matrix update:}\;\bm{\Sigma}^{x}_{k+1}=\nonumber\\
&\hspace{-0.25cm}(\mathbf{I}_{n\times n}-\mathbf{K}^{x}_{k+1}\mathbf{H}_{k+1})\left(\bm{\Sigma}^{x}_{k+1|k}+\mathbf{B}_{k}\bm{\Sigma}^{u}_{k}\mathbf{B}_{k}^{T}(\mathbf{I}_{n\times n}-\mathbf{K}^{x}_{k+1}\mathbf{H}_{k+1})^{T}\right).\nonumber
\end{align}
\normalsize
Forward filter exists if $\textrm{rank}(\bm{\Sigma}^{u}_{k})=m$, for all $k\geq 0$, and $p \geq m$ \cite{pan2010applying}. We provide a detailed derivation of the forward EKF-without-DF recursions (omitted in \cite{pan2010applying}) in Appendix~\ref{App-forward-EKF-without-DF-recursions}.
\subsubsection{Inverse filter}
Consider an augmented state vector $\mathbf{z}_{k}=\begin{bmatrix}
\hat{\mathbf{x}}_{k}^{T} & \hat{\mathbf{u}}_{k-2}^{T}
\end{bmatrix}^{T}$. The defender's observation $\mathbf{a}_{k}$ in \eqref{eqn: non a} is the first observation that contains the information about unknown input estimate $\hat{\mathbf{u}}_{k-2}$, because of the delay in forward filter input estimate. Hence, the delayed estimate $\hat{\mathbf{u}}_{k-2}$ is considered in the augmented state $\mathbf{z}_{k}$. Define $\widetilde{\phi}_{k}(\hat{\mathbf{x}}_{k},\hat{\mathbf{u}}_{k-1},\mathbf{x}_{k+1},\mathbf{v}_{k+1})=f(\hat{\mathbf{x}}_{k},\hat{\mathbf{u}}_{k-1})-\mathbf{K}^{x}_{k+1}h(f(\hat{\mathbf{x}}_{k},\hat{\mathbf{u}}_{k-1}))+\mathbf{K}^{x}_{k+1}h(\mathbf{x}_{k+1})+\mathbf{K}^{x}_{k+1}\mathbf{v}_{k+1}$.
From \eqref{eqn: non y withoutdf}-\eqref{eqn: ekfwithoutdf update u}, state transition equations of augmented state vector are $\hat{\mathbf{x}}_{k+1}=\widetilde{f}_{k}(\hat{\mathbf{x}}_{k},\hat{\mathbf{u}}_{k-2},\hat{\mathbf{x}}_{k-1},\mathbf{x}_{k},\mathbf{x}_{k+1},\mathbf{v}_{k},\mathbf{v}_{k+1})$ and %,\\
$\hat{\mathbf{u}}_{k-1}=\widetilde{h}_{k}(\hat{\mathbf{u}}_{k-2},\hat{\mathbf{x}}_{k-1},\mathbf{x}_{k},\mathbf{v}_{k})$, where
\par\noindent\small
\begin{align}
&\widetilde{h}_{k}(\hat{\mathbf{u}}_{k-2},\hat{\mathbf{x}}_{k-1},\mathbf{x}_{k},\mathbf{v}_{k})\nonumber\\
&\;\;\;\;=\mathbf{K}^{u}_{k-1}(\mathbf{H}_{k}\mathbf{B}_{k-1}\hat{\mathbf{u}}_{k-2}-h(f(\hat{\mathbf{x}}_{k-1},\hat{\mathbf{u}}_{k-2}))+h(\mathbf{x}_{k})+\mathbf{v}_{k}),\label{eqn: state transition ekf without df input}\\
&\widetilde{f}_{k}(\hat{\mathbf{x}}_{k},\hat{\mathbf{u}}_{k-2},\hat{\mathbf{x}}_{k-1},\mathbf{x}_{k},\mathbf{x}_{k+1},\mathbf{v}_{k},\mathbf{v}_{k+1})\nonumber\\
&\;\;\;\;=\widetilde{\phi}_{k}(\hat{\mathbf{x}}_{k},\widetilde{h}_{k}(\hat{\mathbf{u}}_{k-2},\hat{\mathbf{x}}_{k-1},\mathbf{x}_{k},\mathbf{v}_{k}),\mathbf{x}_{k+1},\mathbf{v}_{k+1}).\label{eqn: state transition ekf without df state}
\end{align}
\normalsize

In these state transition equations, the actual states $\mathbf{x}_{k}$ and $\mathbf{x}_{k+1}$ are perfectly known to the defender and henceforth treated as known exogenous inputs. Note that, unlike the forward filter, the process noise terms $\mathbf{v}_{k}$ and $\mathbf{v}_{k+1}$ are non-additive because the filter gains $\mathbf{K}^{x}_{k+1}$ and $\mathbf{K}^{u}_{k-1}$ depend on the previous estimates (through the Jacobians).

Denote $\hat{\mathbf{z}}_{k+1} \doteq \begin{bmatrix}
\doublehat{\mathbf{x}}_{k+1}^{T} & \doublehat{\mathbf{u}}_{k-1}^{T}
\end{bmatrix}^{T}$. The state transition of the augmented state $\mathbf{z}_{k+1}$ depends on the estimate $\hat{\mathbf{x}}_{k-1}$ which the defender approximates by its previous estimate $\doublehat{\mathbf{x}}_{k-1}$. With this approximation, $\hat{\mathbf{x}}_{k-1}$ is treated as a known exogenous input for the inverse filter while the augmented process noise vector is $\begin{bmatrix}
\mathbf{v}_{k}^{T} & \mathbf{v}_{k+1}^{T}
\end{bmatrix}^{T}$. Define the Jacobians $\widetilde{\mathbf{F}}^{z}_{k}\doteq \begin{bmatrix}
\nabla_{\doublehat{\mathbf{x}}_{k}}\widetilde{f}_{k} & \nabla_{\doublehat{\mathbf{u}}_{k-2}}\widetilde{f}_{k}\\
\mathbf{0}_{m\times n} & \nabla_{\doublehat{\mathbf{u}}_{k-2}}\widetilde{h}_{k}
\end{bmatrix}$, and $\mathbf{G}_{k+1}\doteq\begin{bmatrix}
\nabla_{\doublehat{\mathbf{x}}_{k+1|k}}g & \mathbf{0}_{n_{a}\times m}\end{bmatrix}$ with respect to the augmented state; Jacobian $\widetilde{\mathbf{F}}^{v}_{k}\doteq\begin{bmatrix}
\nabla_{\mathbf{v}_{k}}\widetilde{f}_{k} & \nabla_{\mathbf{v}_{k+1}}\widetilde{f}_{k}\\
\nabla_{\mathbf{v}_{k}}\widetilde{h}_{k} & \mathbf{0}_{m\times p}
\end{bmatrix}$ with respect to the augmented process noise vector; and $\overline{\mathbf{Q}}_{k}=\widetilde{\mathbf{F}}^{v}_{k}\begin{bmatrix}
\mathbf{R} & \mathbf{0}_{p\times p}\\ \mathbf{0}_{p\times p} & \mathbf{R}
\end{bmatrix}(\widetilde{\mathbf{F}}^{v}_{k})^{T}$. Then, the I-EKF-without-DF's recursions yield the estimate $\hat{\mathbf{z}}_{k}$ of the augmented state and the associated covariance matrix $\overline{\bm{\Sigma}}_{k}$ as:
\par\noindent\small
\begin{align}
&\textit{Prediction:}\;\doublehat{\mathbf{x}}_{k+1|k}=\widetilde{f}_{k}(\doublehat{\mathbf{x}}_{k},\doublehat{\mathbf{u}}_{k-2},\doublehat{\mathbf{x}}_{k-1},\mathbf{x}_{k},\mathbf{x}_{k+1},\mathbf{0}_{p\times 1},\mathbf{0}_{p\times 1}),\nonumber\\
&\doublehat{\mathbf{u}}_{k-1|k}=\widetilde{h}_{k}(\doublehat{\mathbf{u}}_{k-2},\doublehat{\mathbf{x}}_{k-1},\mathbf{x}_{k},\mathbf{0}_{p\times 1}),\nonumber\\
&\hat{\mathbf{z}}_{k+1|k}=\begin{bmatrix}
\doublehat{\mathbf{x}}_{k+1|k}^{T} & \doublehat{\mathbf{u}}_{k-1|k}^{T}
\end{bmatrix}^{T},\nonumber\\
&\overline{\bm{\Sigma}}_{k+1|k}=\widetilde{\mathbf{F}}^{z}_{k}\overline{\bm{\Sigma}}_{k}(\widetilde{\mathbf{F}}^{z}_{k})^{T}+\overline{\mathbf{Q}}_{k},\label{eqn: I-EKF without DF covariance predict}\\
&\textit{Update:}\;\overline{\mathbf{S}}_{k+1}=\mathbf{G}_{k+1}\overline{\bm{\Sigma}}_{k+1|k}\mathbf{G}_{k+1}^{T}+\bm{\Sigma}_{\epsilon},\label{eqn: I-EKF without DF S compute}\\
&\hat{\mathbf{z}}_{k+1}=\hat{\mathbf{z}}_{k+1|k}+\overline{\bm{\Sigma}}_{k+1|k}\mathbf{G}_{k+1}^{T}\overline{\mathbf{S}}_{k+1}^{-1}\left(\mathbf{a}_{k+1}-g(\doublehat{\mathbf{x}}_{k+1|k})\right),\label{eqn: I-EKF without DF a predict}\\
&\overline{\bm{\Sigma}}_{k+1}=\overline{\bm{\Sigma}}_{k+1|k}-\overline{\bm{\Sigma}}_{k+1|k}\mathbf{G}_{k+1}^{T}\overline{\mathbf{S}}_{k+1}^{-1}\mathbf{G}_{k+1}\overline{\bm{\Sigma}}_{k+1|k}.\label{eqn: I-EKF without DF covariance update}
\end{align}
\normalsize

Fig.~\ref{fig:I-EKF-without-DF} provides a schematic diagram for  these updates. The I-EKF-without-DF's recursions take the same form as that of the standard EKF \cite{anderson2012optimal} but with modified system matrices. In particular, the former employs an augmented state such that the Jacobian of the state transition function with respect to the state is computed as $\widetilde{\mathbf{F}}^{z}_{k}$ while for the latter, it is simply $\mathbf{F}_{k}\doteq\nabla_{\mathbf{x}}f(\mathbf{x})\vert_{\mathbf{x}=\hat{\mathbf{x}}_{k}}$. Further, unlike standard KF or EKF, the noise terms, i.e., {$\mathbf{v}_{k}$ and $\mathbf{v}_{k+1}$ in \eqref{eqn: state transition ekf without df input} and \eqref{eqn: state transition ekf without df state} are non-additive such that linearization $\widetilde{\mathbf{F}}^{v}_{k}$ of the state transition function with respect to the noise terms yields the process noise covariance matrix approximation $\overline{\mathbf{Q}}_{k}$.
%---------------------------------------------------------------------
\begin{figure}
  \centering
  \includegraphics[width = 1.0\columnwidth]{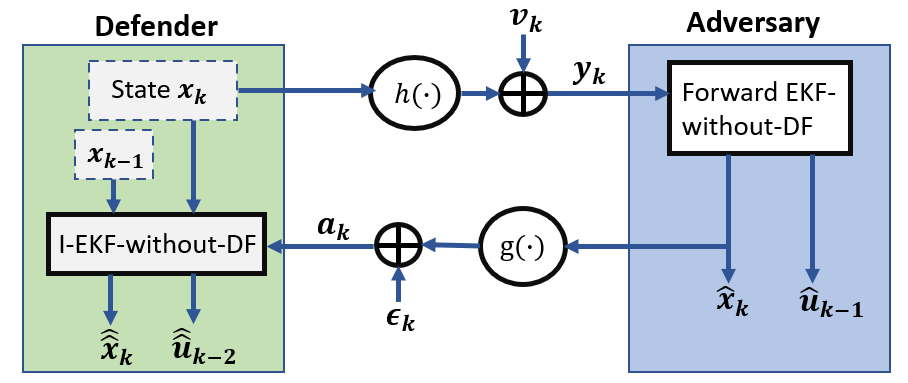}
  \caption{Graphical representation of I-EKF-without-DF recursion at $k$-th time step. The defender's true state at $k$-th and $(k-1)$-th time step are, respectively, $\mathbf{x}_{k}$ and $\mathbf{x}_{k-1}$. The adversary observes $\mathbf{x}_{k}$ as $\mathbf{y}_{k}$ through observation function $h(\cdot)$ with measurement noise $\mathbf{v}_{k}$. With $\mathbf{y}_{k}$ as input, adversary's forward EKF-without-DF computes state estimate $\hat{\mathbf{x}}_{k}$ and (one-step delayed) input estimate $\hat{\mathbf{u}}_{k-1}$. Defender observes $\hat{\mathbf{x}}_{k}$ as $\mathbf{a}_{k}$ through observation function $g(\cdot)$ with measurement noise $\bm{\epsilon}_{k}$. Finally, I-EKF-without-DF computes estimates $\doublehat{\mathbf{x}}_{k}$ and $\doublehat{\mathbf{u}}_{k-2}$ with $\mathbf{x}_{k-1}$, $\mathbf{x}_{k}$ and $\mathbf{a}_{k}$ as inputs.}
 \label{fig:I-EKF-without-DF}
\end{figure}
%-----------------------------------------------------

\begin{remark}
The forward filter gains $\mathbf{K}^{x}_{k+1}$ and $\mathbf{K}^{u}_{k-1}$ are treated as time-varying parameters of the state transition equation and not as a function of the state and input estimates ($\hat{\mathbf{x}}_{k}$ and $\hat{\mathbf{u}}_{k-1}$) in the inverse filter. The inverse filter approximates them by evaluating their values at its own estimates ($\doublehat{\mathbf{x}}_{k}$ and $\doublehat{\mathbf{u}}_{k-1}$) recursively in the similar manner as the forward filter evaluates them using its own estimates. On the contrary, in I-KF formulation introduced in \cite{krishnamurthy2019how}, the forward Kalman gain $\mathbf{K}_{k+1}$ is deterministic, fully determined by the model parameters for a given initial covariance estimate $\bm{\Sigma}_{0}$, and computed offline independent of the current I-KF's estimate.
\end{remark}

\subsection{I-EKF-with-DF unknown input}
\label{subsec:ekfwithdf}
Consider the non-linear system with DF given by \eqref{eqn: non x with input} and \eqref{eqn: non y withdf}. Linearize the functions as $\mathbf{F}_{k}\doteq\nabla_{\mathbf{x}}f(\mathbf{x},\hat{\mathbf{u}}_{k})|_{\mathbf{x}=\hat{\mathbf{x}}_{k}}$, $\mathbf{H}_{k+1}\doteq\nabla_{\mathbf{x}}h(\mathbf{x},\hat{\mathbf{u}}_{k})|_{\mathbf{x}=\hat{\mathbf{x}}_{k+1|k}}$ and $\mathbf{D}_{k}\doteq\nabla_{\mathbf{u}}h(\hat{\mathbf{x}}_{k+1|k},\mathbf{u})|_{\mathbf{u}=\hat{\mathbf{u}}_{k}}$.
\subsubsection{Forward filter}
Denote the state and input estimation covariance and gain matrices identical to Section~\ref{subsec:ekfwithoutdf}. Here, the current observation $\mathbf{y}_{k+1}$ depends on the current unknown input $\mathbf{u}_{k+1}$ such that the forward filter infers $\hat{\mathbf{u}}_{k+1}$ without any delay. For input estimation covariance without delay, we use $\bm{\Sigma}^{u}_{k+1}$. Then, the forward EKF-with-DF's recursions are \cite{yang2007adaptive}
\par\noindent\small
\begin{align}
&\textit{Prediction:}\;\hat{\mathbf{x}}_{k+1|k}=f(\hat{\mathbf{x}}_{k},\hat{\mathbf{u}}_{k}),\;\bm{\Sigma}^{x}_{k+1|k}=\mathbf{F}_{k}\bm{\Sigma}^{x}_{k}\mathbf{F}_{k}^{T}+\mathbf{Q},\label{eqn: ekfwithdf predict}\\
&\mathbf{K}^{x}_{k+1}=\bm{\Sigma}^{x}_{k+1|k}\mathbf{H}_{k+1}^{T}(\mathbf{H}_{k+1}\bm{\Sigma}^{x}_{k+1|k}\mathbf{H}_{k+1}^{T}+\mathbf{R})^{-1},\nonumber\\
&\bm{\Sigma}^{u}_{k+1}=\left(\mathbf{D}_{k}^{T}\mathbf{R}^{-1}(\mathbf{I}_{p\times p}-\mathbf{H}_{k+1}\mathbf{K}^{x}_{k+1})\mathbf{D}_{k}\right)^{-1},\nonumber\\
&\mathbf{K}^{u}_{k+1}=\bm{\Sigma}^{u}_{k+1}\mathbf{D}_{k}^{T}\mathbf{R}^{-1}(\mathbf{I}_{p\times p}-\mathbf{H}_{k+1}\mathbf{K}^{x}_{k+1}),\nonumber\\
&\textit{Update:}\;\hat{\mathbf{u}}_{k+1}=\mathbf{K}^{u}_{k+1}\left(\mathbf{y}_{k+1}-h(\hat{\mathbf{x}}_{k+1|k},\hat{\mathbf{u}}_{k})+\mathbf{D}_{k}\hat{\mathbf{u}}_{k}\right),\label{eqn: ekfwithdf update u}\\
&\hspace{-0.8mm}\hat{\mathbf{x}}_{k+1}=\hat{\mathbf{x}}_{k+1|k}+\mathbf{K}^{x}_{k+1}\left(\mathbf{y}_{k+1}-h(\hat{\mathbf{x}}_{k+1|k},\hat{\mathbf{u}}_{k})-\mathbf{D}_{k}(\hat{\mathbf{u}}_{k+1}-\hat{\mathbf{u}}_{k})\right),\label{eqn: ekfwithdf update x}\\
&\textit{Covariance matrix update:}\;\bm{\Sigma}^{x}_{k+1}=\bm{\Sigma}^{x}_{k+1|k}\nonumber\\
&\hspace{-0.8mm}\times(\mathbf{I}_{n\times n}+\mathbf{K}^{x}_{k+1}\mathbf{D}_{k}\bm{\Sigma}^{u}_{k+1}\mathbf{D}_{k}^{T}\mathbf{R}^{-1}\mathbf{H}_{k+1})(\mathbf{I}_{n\times n}-\mathbf{K}^{x}_{k+1}\mathbf{H}_{k+1}).\nonumber
\end{align}
\normalsize
The forward filter exists if $\textrm{rank}(\mathbf{D}_{k})=m$ for all $k\geq 0$, which implies $p\geq m$\cite{yang2007adaptive}.
\subsubsection{Inverse filter}
Consider an augmented state vector $\mathbf{z}_{k}=\begin{bmatrix}
\hat{\mathbf{x}}_{k}^{T} & \hat{\mathbf{u}}_{k}^{T}
\end{bmatrix}^{T}$  (note the absence of delay in the input estimate). Define $\widetilde{\phi}_{k}(\hat{\mathbf{x}}_{k},\hat{\mathbf{u}}_{k},\hat{\mathbf{u}}_{k+1},\mathbf{x}_{k+1},\mathbf{u}_{k+1},\mathbf{v}_{k+1})=f(\hat{\mathbf{x}}_{k},\hat{\mathbf{u}}_{k})-\mathbf{K}^{x}_{k+1}h(f(\hat{\mathbf{x}}_{k},\hat{\mathbf{u}}_{k}),\hat{\mathbf{u}}_{k})-\mathbf{K}^{x}_{k+1}\mathbf{D}_{k}(\hat{\mathbf{u}}_{k+1}-\hat{\mathbf{u}}_{k})+\mathbf{K}^{x}_{k+1}h(\mathbf{x}_{k+1},\mathbf{u}_{k+1})+\mathbf{K}^{x}_{k+1}\mathbf{v}_{k+1}$.
From \eqref{eqn: non y withdf} and \eqref{eqn: ekfwithdf predict}-\eqref{eqn: ekfwithdf update x}, state transitions for inverse filter are $\hat{\mathbf{x}}_{k+1}=\widetilde{f}_{k}(\hat{\mathbf{x}}_{k},\hat{\mathbf{u}}_{k},\mathbf{x}_{k+1},\mathbf{u}_{k+1},\mathbf{v}_{k+1})$ and %,\\
$\hat{\mathbf{u}}_{k+1}=\widetilde{h}_{k}(\hat{\mathbf{x}}_{k},\hat{\mathbf{u}}_{k},\mathbf{x}_{k+1},\mathbf{u}_{k+1},\mathbf{v}_{k+1})$, 
where 
\par\noindent\small
\begin{align}
%&\textit{State transition functions for I-EKF-with-DF:}\nonumber\\
&\widetilde{h}_{k}(\hat{\mathbf{x}}_{k},\hat{\mathbf{u}}_{k},\mathbf{x}_{k+1},\mathbf{u}_{k+1},\mathbf{v}_{k+1})\nonumber\\
&=\mathbf{K}^{u}_{k+1}(h(\mathbf{x}_{k+1},\mathbf{u}_{k+1})+\mathbf{v}_{k+1}-h(f(\hat{\mathbf{x}}_{k},\hat{\mathbf{u}}_{k}),\hat{\mathbf{u}}_{k})+\mathbf{D}_{k}\hat{\mathbf{u}}_{k})\nonumber\\
&\widetilde{f}_{k}(\hat{\mathbf{x}}_{k},\hat{\mathbf{u}}_{k},\mathbf{x}_{k+1},\mathbf{u}_{k+1},\mathbf{v}_{k+1})\nonumber\\
&=\widetilde{\phi}_{k}(\hat{\mathbf{x}}_{k},\hat{\mathbf{u}}_{k},\widetilde{h}_{k}(\hat{\mathbf{x}}_{k},\hat{\mathbf{u}}_{k},\mathbf{x}_{k+1},\mathbf{u}_{k+1},\mathbf{v}_{k+1}),\mathbf{x}_{k+1},\mathbf{u}_{k+1},\mathbf{v}_{k+1}).\label{eqn:state transition ekf with df}
\end{align}
%\hrule
\normalsize
%\end{figure*}
%\end{widetext}

Then, \textit{ceteris paribus}, following similar steps as in I-EKF-without-DF, %the I-EKF-with-DF %employs these state transition equations and the augmented state $\mathbf{z}_{k}$ to 
the I-EKF-with-DF estimate $\hat{\mathbf{z}}_{k}=\begin{bmatrix}
\doublehat{\mathbf{x}}_{k}^{T} & \doublehat{\mathbf{u}}_{k}^{T}
\end{bmatrix}^{T}$ from observations \eqref{eqn: non a} is computed recursively. The predicted augmented state is $\hat{\mathbf{z}}_{k+1|k}=\begin{bmatrix}
\doublehat{\mathbf{x}}_{k+1|k}^{T} & \doublehat{\mathbf{u}}_{k+1|k}^{T}
\end{bmatrix}^{T}$, where
%\par\noindent\small
%\begin{align*}
%&
$\doublehat{\mathbf{x}}_{k+1|k}=\widetilde{f}_{k}(\doublehat{\mathbf{x}}_{k},\doublehat{\mathbf{u}}_{k},\mathbf{x}_{k+1},\mathbf{u}_{k+1},\mathbf{0}_{p\times 1})$ and %\\&
$\doublehat{\mathbf{u}}_{k+1|k}=\widetilde{h}_{k}(\doublehat{\mathbf{x}}_{k},\doublehat{\mathbf{u}}_{k},\mathbf{x}_{k+1},\mathbf{u}_{k+1},\mathbf{0}_{p\times 1})$. %\\&
%\end{align*}
%\normalsize
Hereafter, the remaining steps are as in \eqref{eqn: I-EKF without DF covariance predict}-\eqref{eqn: I-EKF without DF covariance update}. For I-EKF-with-DF, the Jacobians with respect to the augmented state are $\widetilde{\mathbf{F}}^{z}_{k}\doteq\begin{bmatrix}
\nabla_{\doublehat{\mathbf{x}}_{k}}\widetilde{f}_{k} & \nabla_{\doublehat{\mathbf{u}}_{k}}\widetilde{f}_{k} \\
\nabla_{\doublehat{\mathbf{x}}_{k}}\widetilde{h}_{k} & \nabla_{\doublehat{\mathbf{u}}_{k}}\widetilde{h}_{k}
\end{bmatrix}$ and $\mathbf{G}_{k+1}\doteq\begin{bmatrix}
\nabla_{\doublehat{\mathbf{x}}_{k+1|k}}g & \mathbf{0}_{n_{a}\times m}\end{bmatrix}$; the Jacobian with respect to the process noise term is $\widetilde{\mathbf{F}}^{v}_{k}\doteq\begin{bmatrix}
\nabla_{\mathbf{v}_{k+1}}\widetilde{f}_{k}\\
\nabla_{\mathbf{v}_{k+1}}\widetilde{h}_{k}
\end{bmatrix}$; and $\overline{\mathbf{Q}}_{k}=\widetilde{\mathbf{F}}^{v}_{k}\mathbf{R}(\widetilde{\mathbf{F}}^{v}_{k})^{T}$. Fig.~\ref{fig:I-EKF-with-DF} shows these updates graphically. Note that unlike I-EKF-without-DF, I-EKF-with-DF requires the true input $\mathbf{u}_{k}$ information. Here, unlike I-EKF-without-DF, the inverse filter's prediction dispenses with any approximation of $\hat{\mathbf{x}}_{k-1}$. The absence of delay in input estimation also results in a simplified process noise term $\mathbf{v}_{k+1}$, in place of I-EKF-without-DF's augmented noise vector.

Examples of EKF with unknown inputs include fault detection with unknown excitations \cite{yang2007adaptive} %dynamic state estimation in a power system with unmeasured input signals\cite{ghahremani2011simultaneous}, 
and missile-target interception with unknown target acceleration \cite{pan2010applying}. The inverse cognition in these applications would then resort to the I-EKFs described until now.
%---------------------------------------------------------------------
\begin{figure}
  \centering
  \includegraphics[width = 1.0\columnwidth]{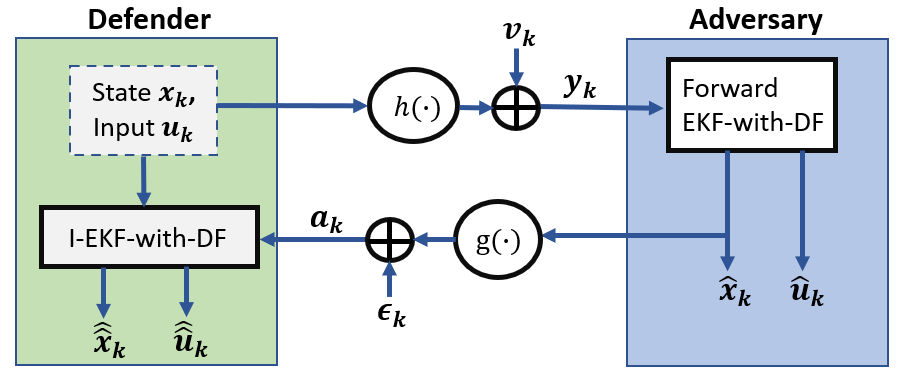}
  \caption{Graphical representation of I-EKF-with-DF recursion at $k$-th time step. The defender's true state and input (unknown to adversary) at $k$-th time step are $\mathbf{x}_{k}$ and $\mathbf{u}_{k}$, respectively. The adversary observes $(\mathbf{x}_{k},\mathbf{u}_{k})$ as $\mathbf{y}_{k}$ through observation function $h(\cdot)$ with measurement noise $\mathbf{v}_{k}$. With $\mathbf{y}_{k}$ as input, adversary's forward EKF-with-DF computes state estimate $\hat{\mathbf{x}}_{k}$ and input estimate $\hat{\mathbf{u}}_{k}$. Defender observes $\hat{\mathbf{x}}_{k}$ as $\mathbf{a}_{k}$ through observation function $g(\cdot)$ with measurement noise $\bm{\epsilon}_{k}$. Finally, I-EKF-with-DF computes estimates $\doublehat{\mathbf{x}}_{k}$ and $\doublehat{\mathbf{u}}_{k}$ with $\mathbf{x}_{k}$, $\mathbf{u}_{k}$ and $\mathbf{a}_{k}$ as inputs.}
 \label{fig:I-EKF-with-DF}
\end{figure}
%-----------------------------------------------------

\subsection{I-EKF without any unknown inputs}
\label{subsec:ekf}
Consider a non-linear system model without unknown inputs in the system equations \eqref{eqn: non x with input} and \eqref{eqn: non y withoutdf}, i.e.,% reducing respectively to
\par\noindent\small
\begin{align}
\mathbf{x}_{k+1}=f(\mathbf{x}_{k})+\mathbf{w}_{k}.\label{eqn: ekf x}
\end{align}
\normalsize
Linearize the functions as $\mathbf{F}_{k}\doteq\nabla_{\mathbf{x}}f(\mathbf{x})\vert_{\mathbf{x}=\hat{\mathbf{x}}_{k}}$ and 
$\mathbf{H}_{k+1}\doteq\nabla_{\mathbf{x}}h(\mathbf{x})\vert_{\mathbf{x}=\hat{\mathbf{x}}_{k+1|k}}$. %The forward EKF is obtained from forward EKF-without-DF by neglecting the unknown input estimation (computation of $\bm{\Sigma}^{u}_{k}$, $\mathbf{K}^{u}_{k}$ and $\hat{\mathbf{u}}_{k}$ and with $\mathbf{B}_{k}=\mathbf{0}_{n\times p}$). 
Then, \textit{ceteris paribus}, setting $\mathbf{B}_{k}=\mathbf{0}_{n\times p}$ and neglecting computation of $\bm{\Sigma}^{u}_{k}$, $\mathbf{K}^{u}_{k}$ and $\hat{\mathbf{u}}_{k}$ in forward EKF-without-DF yields forward EKF-without-unknown-input whose state prediction and updates are
\par\noindent\small
\begin{align}
&\hat{\mathbf{x}}_{k+1|k}=f(\hat{\mathbf{x}}_{k}),\label{eqn: ekf predict}\\
&\hat{\mathbf{x}}_{k+1}=\hat{\mathbf{x}}_{k+1|k}+\mathbf{K}_{k+1}(\mathbf{y}_{k+1}-h(\hat{\mathbf{x}}_{k+1|k})),\label{eqn: ekf update}
\end{align}
\normalsize
with $\mathbf{K}_{k+1}=\bm{\Sigma}_{k+1|k}\mathbf{H}_{k+1}^{T}\left(\mathbf{H}_{k+1}\bm{\Sigma}_{k+1|k}\mathbf{H}_{k+1}^{T}+\mathbf{R}\right)^{-1}$. Here, we have dropped the superscript in the covariance matrix $\bm{\Sigma}^{x}_{k+1|k}$ and gain $\mathbf{K}^{x}_{k+1}$ to replace with $\bm{\Sigma}_{k+1|k}$ and $\mathbf{K}_{k+1}$, respectively (because only the state estimation covariances and gains are computed here). Thence, the I-EKF-without-DF's state transition equations and recursions yield I-EKF-without-unknown-input. Dropping the input estimate term in the augmented state $\mathbf{z}_{k}$, the state transition equations become
\par\noindent\small
\begin{align}
&\hat{\mathbf{x}}_{k+1}=\widetilde{f}_{k}(\hat{\mathbf{x}}_{k},\mathbf{x}_{k+1},\mathbf{v}_{k+1})\nonumber\\
&=f(\hat{\mathbf{x}}_{k})-\mathbf{K}_{k+1}h(f(\hat{\mathbf{x}}_{k}))+\mathbf{K}_{k+1}h(\mathbf{x}_{k+1})+\mathbf{K}_{k+1}\mathbf{v}_{k+1}.\label{eqn: inverse ekf state transition}
\end{align}
\normalsize
Denote $\widetilde{\mathbf{F}}^{x}_{k}\doteq\nabla_{\mathbf{x}}\widetilde{f}_{k}(\mathbf{x},\mathbf{x}_{k+1},\mathbf{0}_{p\times 1})|_{\mathbf{x}=\doublehat{\mathbf{x}}_{k}}$, $\mathbf{G}_{k+1}\doteq\nabla_{\mathbf{x}}g(\mathbf{x})_{\mathbf{x}=\doublehat{\mathbf{x}}_{k+1|k}}$, $\widetilde{\mathbf{F}}^{v}_{k}\doteq\nabla_{\mathbf{v}}\widetilde{f}_{k}(\doublehat{\mathbf{x}}_{k},\mathbf{x}_{k+1},\mathbf{v})|_{\mathbf{v}=\mathbf{0}_{p\times 1}}$, and $\overline{\mathbf{Q}}_{k}=\widetilde{\mathbf{F}}^{v}_{k}\mathbf{R}(\widetilde{\mathbf{F}}^{v}_{k})^{T}$. Then, the I-EKF's recursions are similar to I-EKF-without-DF except that the I-EKF's predicted state estimate and the associated prediction covariance matrix are computed, respectively, as
%\par\noindent\small
%\begin{align*}
%&
$\doublehat{\mathbf{x}}_{k+1|k}=\widetilde{f}_{k}(\doublehat{\mathbf{x}}_{k},\mathbf{x}_{k+1},\mathbf{0}_{p\times 1})$ and %,\\&
$\overline{\bm{\Sigma}}_{k+1|k}=\widetilde{\mathbf{F}}^{x}_{k}\overline{\bm{\Sigma}}_{k}(\widetilde{\mathbf{F}}^{x}_{k})^{T}+\overline{\mathbf{Q}}_{k}$,
%\end{align*}
%\normalsize
followed by the update procedure in \eqref{eqn: I-EKF without DF S compute}-\eqref{eqn: I-EKF without DF covariance update}. 

Unlike I-KF \cite{krishnamurthy2019how}, the I-EKF approximates the forward gain $\mathbf{K}_{k+1}$ online at its own estimates recursively and is sensitive to the initial estimate of forward EKF's initial covariance matrix. I-EKF could be applied in various non-linear target tracking applications, where EKF is a popular forward filter\cite{ristic2003beyond}.

\begin{remark}\label{remark:non Gaussian}
So far, our system model considered the Gaussian process and measurement noises. To tackle the non-Gaussianity of the noise, Gaussian-sum EKF \cite{anderson2012optimal} and its inverse developed in our companion paper (Part II) \cite{singh2022inverse_part2} may be considered. Alternatively, one may employ the maximum correntropy criterion (MCC)-based filters\cite{cinar2012hidden}. For instance, the forward MCC-EKF in \cite{yang2019map} introduces a scalar ratio $d_{k+1}=G_{\sigma}(\|\mathbf{y}_{k+1}-h(\hat{\mathbf{x}}_{k+1|k})\|_{R^{-1}})/G_{\sigma}(\|\hat{\mathbf{x}}_{k+1|k}-f(\hat{\mathbf{x}}_{k})\|_{\bm{\Sigma}_{k+1|k}^{-1}})$, where $G_{\sigma}(\cdot)$ is the Gaussian kernel. The forward gain matrix $\mathbf{K}_{k+1}$ then becomes $\mathbf{K}_{k+1}=\bm{\Sigma}_{k+1|k}\mathbf{H}^{T}_{k+1}(\mathbf{H}_{k+1}\bm{\Sigma}_{k+1|k}\mathbf{H}^{T}_{k+1}+d_{k+1}^{-1}\mathbf{R})^{-1}$. The state prediction and update steps are the same as in forward EKF. While formulating the inverse filter, these modifications need to be taken into account in the inverse filter's state-transition equation. Also, I-EKF's gain matrix $\overline{\mathbf{K}}_{k+1}$ is similarly modified using $\overline{d}_{k+1}$ which is the counterpart of $d_{k+1}$ for the inverse filter's dynamics.
\end{remark}
\begin{remark}\label{remark:computation complexity}
Note that the proposed I-EKFs' recursions are obtained from that of a standard EKF, but with the state transition equation representing the evolution of the corresponding forward filter's state estimate. Hence, these I-EKFs have similar computational complexity as a standard EKF, i.e., $\mathcal{O}(d^3)$ where $d$ is the dimension of the estimated state vector\cite{daum2005nonlinear}. However, I-EKF with and without DF would estimate the augmented states $\mathbf{z}_{k}=[\hat{\mathbf{x}}_{k}^{T},\hat{\mathbf{u}}_{k-2}^{T}]^{T}$ and $\mathbf{z}_{k}=[\hat{\mathbf{x}}_{k}^{T},\hat{\mathbf{u}}_{k}^{T}]^{T}$, respectively. Hence, the overall computational complexity of these filters depends on the dimension of state $\mathbf{x}_{k}$ as well as the dimension of unknown inputs $\mathbf{u}_{k}$ in the system.
\end{remark}

%For the stability analysis of I-EKF of Section~\ref{subsec:ekf stable Reif}, we also consider the one-step prediction formulation of the filter. 
%\textcolor{magenta}{Sir, we have not mentioned here why we are deriving the one-step formulation. How can we do that without referring to the stability section?}
The two-step prediction-update formulation (as discussed for EKF and I-EKF so far) infers an estimate of the current state. However, often for stability analyses, the one-step prediction formulation is analytically more useful. In this formulation, the estimate $\hat{\mathbf{x}}_{k}$ is  the one-step prediction estimate, i.e., an estimate of state $\mathbf{x}_{k}$ at $k$-th instant given the observations $\lbrace\mathbf{y}_{j}\rbrace_{1\leq j\leq k-1}$ up to time instant $k-1$ with $\bm{\Sigma}_{k}$ as the corresponding prediction covariance matrix. The forward one-step prediction EKF formulation\cite{reif1999stochastic} for the same system but with $\mathbf{F}_{k}\doteq\nabla_{\mathbf{x}}f(\mathbf{x})\vert_{\mathbf{x}=\hat{\mathbf{x}}_{k}}$ and $\mathbf{H}_{k}\doteq\nabla_{\mathbf{x}}h(\mathbf{x})\vert_{\mathbf{x}=\hat{\mathbf{x}}_{k}}$ is
\par\noindent\small
\begin{align}
&\mathbf{K}_{k}=\mathbf{F}_{k}\bm{\Sigma}_{k}\mathbf{H}_{k}^{T}(\mathbf{H}_{k}\bm{\Sigma}_{k}\mathbf{H}_{k}^{T}+\mathbf{R})^{-1},\label{eqn: one step forward ekf gain}\\
&\hat{\mathbf{x}}_{k+1}=f(\hat{\mathbf{x}}_{k})+\mathbf{K}_{k}(\mathbf{y}_{k}-h(\hat{\mathbf{x}}_{k})),\label{eqn: one step forward ekf update}\\
&\bm{\Sigma}_{k+1}=\mathbf{F}_{k}\bm{\Sigma}_{k}\mathbf{F}_{k}^{T}+\mathbf{Q}-\mathbf{K}_{k}(\mathbf{H}_{k}\bm{\Sigma}_{k}\mathbf{H}_{k}^{T}+\mathbf{R})\mathbf{K}_{k}^{T}.\label{eqn: one step forward ekf covariance}
\end{align}
\normalsize

From \eqref{eqn: non y withoutdf} and \eqref{eqn: one step forward ekf update}, the state transition equation for one-step formulation of I-EKF is $\hat{\mathbf{x}}_{k+1}=\widetilde{f}_{k}(\hat{\mathbf{x}}_{k},\mathbf{x}_{k},\mathbf{v}_{k})\doteq f(\hat{\mathbf{x}}_{k})-\mathbf{K}_{k}h(\hat{\mathbf{x}}_{k})+\mathbf{K}_{k}h(\mathbf{x}_{k})+\mathbf{K}_{k}\mathbf{v}_{k}$.
%\par\noindent\small
%\begin{align*}
%\hat{\mathbf{x}}_{k+1}&=\widetilde{f}_{k}(\hat{\mathbf{x}}_{k},\mathbf{x}_{k},\mathbf{v}_{k})\\
%&\doteq f(\hat{\mathbf{x}}_{k})-\mathbf{K}_{k}h(\hat{\mathbf{x}}_{k})+\mathbf{K}_{k}h(\mathbf{x}_{k})+\mathbf{K}_{k}\mathbf{v}_{k}.
%\end{align*}
%\normalsize
With this state transition, the I-EKF one-step prediction formulation follows directly from EKF's one-step prediction formulation treating $\mathbf{a}_{k}$ as the observation with the Jacobians with respect to state estimate $\widetilde{\mathbf{F}}^{x}_{k}=\nabla_{\mathbf{x}}\widetilde{f}_{k}(\mathbf{x},\mathbf{x}_{k},\mathbf{0})\vert_{\mathbf{x}=\doublehat{\mathbf{x}}_{k}}=\mathbf{F}_{k}-\mathbf{K}_{k}\mathbf{H}_{k}$ and $\mathbf{G}_{k}=\nabla_{\mathbf{x}}g(\mathbf{x})\vert_{\mathbf{x}=\doublehat{\mathbf{x}}_{k}}$, and the process noise covariance matrix $\overline{\mathbf{Q}}_{k}=\mathbf{K}_{k}\mathbf{R}\mathbf{K}_{k}^{T}$.

\vspace{-8pt}
\section{Inverse KF with unknown input}
\label{sec:kfunknown}
%Here, we consider linearly evolving system models with unknown inputs. In Section \ref{subsec:kfwithoutdf}, the system without DF is considered, while in Section \ref{subsec:kfwithdf}, the system with DF is considered. 
%Similar to Section \ref{sec:ekfunknown}, t
For linear Gaussian state-space models, our methods developed in the previous section are useful in extending the I-KF mentioned in \cite{krishnamurthy2019how} to unknown input. Again, the forward KFs employed by the adversary with and without DF are conceptually different \cite{gillijns2007kfb} because of the delay involved in input estimation. The forward KFs with unknown input provide unbiased minimum variance state and input estimates.
\subsection{I-KF-without-DF}
\label{subsec:kfwithoutdf}
Consider the %linear discrete-time 
system in \eqref{eqn: linear x with input} and \eqref{eqn: linear y withdf} with $\mathbf{D}=\mathbf{0}_{p\times m}$. 
\subsubsection{Forward filter}
Unlike EKF-without-DF, the forward KF-without-DF considers an intermediate state update step using the estimated unknown input before the final state updates. In this step, the unknown input is first estimated (with one-step delay) using the current observation $\mathbf{y}_{k+1}$ and input estimation gain matrix $\mathbf{M}_{k+1}$.  %The input estimate $\hat{\mathbf{u}}_{k}$ is then used to obtain an updated state estimate $\widetilde{\mathbf{x}}_{k+1|k+1}$ and the associated state estimation covariance matrix $\widetilde{\bm{\Sigma}}_{k+1|k+1}$.
In the update step, the current state estimate $\hat{\mathbf{x}}_{k+1}$ is computed by again considering the current observation $\mathbf{y}_{k+1}$ as\cite{gillijns2007unknownkf}
\par\noindent\small
\begin{align}
&\textit{Prediction:}\;\hat{\mathbf{x}}_{k+1|k}=\mathbf{F}\hat{\mathbf{x}}_{k},\;\bm{\Sigma}_{k+1|k}=\mathbf{F}\bm{\Sigma}_{k}\mathbf{F}^{T}+\mathbf{Q},\label{eqn: kfwithoutdf predict}\\
&\textit{Unknown input estimation:}\;\mathbf{S}_{k+1}=\mathbf{H}\bm{\Sigma}_{k+1|k}\mathbf{H}^{T}+\mathbf{R},\\
&\bm{M}_{k+1}=(\mathbf{B}^{T}\mathbf{H}^{T}\mathbf{S}_{k+1}^{-1}\mathbf{HB})^{-1}\mathbf{B}^{T}\mathbf{H}^{T}\mathbf{S}_{k+1}^{-1},\\
&\hat{\mathbf{u}}_{k}=\mathbf{M}_{k+1}(\mathbf{y}_{k+1}-\mathbf{H}\hat{\mathbf{x}}_{k+1|k}),\label{eqn: kfwithoutdf update u}\\
&\widetilde{\mathbf{x}}_{k+1|k+1}=\hat{\mathbf{x}}_{k+1|k}+\mathbf{B}\hat{\mathbf{u}}_{k},\label{eqn: kfwithoutdf update x with u}\\
&\widetilde{\bm{\Sigma}}_{k+1|k+1}=(\mathbf{I}_{n\times n}-\mathbf{BM}_{k+1}\mathbf{H})\bm{\Sigma}_{k+1|k}(\mathbf{I}_{n\times n}-\mathbf{BM}_{k+1}\mathbf{H})^{T}\nonumber\\
&\hspace{1.5cm}+\mathbf{BM}_{k+1}\mathbf{RM}_{k+1}^{T}\mathbf{B}^{T},\\
&\textit{Update:}\;\mathbf{K}_{k+1}=\bm{\Sigma}_{k+1|k}\mathbf{H}^{T}\mathbf{S}_{k+1}^{-1},\\
&\hat{\mathbf{x}}_{k+1}=\widetilde{\mathbf{x}}_{k+1|k+1}+\mathbf{K}_{k+1}(\mathbf{y}_{k+1}-\mathbf{H}\widetilde{\mathbf{x}}_{k+1|k+1}),\label{eqn: kfwithoutdf update x}\\
&\bm{\Sigma}_{k+1}=\widetilde{\bm{\Sigma}}_{k+1|k+1}-\mathbf{K}_{k+1}(\widetilde{\bm{\Sigma}}_{k+1|k+1}\mathbf{H}^{T}-\mathbf{BM}_{k+1}\mathbf{R})^{T}.\label{eqn: kfwithoutdf sigma update}
\end{align}
\normalsize
The forward filter exists if $\textrm{rank}(\mathbf{HB})=\textrm{rank}(\mathbf{B})=m$ which implies $n\geq m$ and $p\geq m$\cite{gillijns2007unknownkf}. Here, unlike I-EKFs, the gain matrices $\mathbf{K}_{k+1}$ and $\mathbf{M}_{k+1}$, are deterministic and completely determined by the model parameters and the initial covariance matrix similar to I-KF\cite{krishnamurthy2019how}.

\subsubsection{Inverse filter}
Denote $\widetilde{\mathbf{F}}_{k}=(\mathbf{I}_{n\times n}-\mathbf{K}_{k+1}\mathbf{H})(\mathbf{I}_{n\times n}-\mathbf{BM}_{k+1}\mathbf{H})\mathbf{F}$ and $\mathbf{E}_{k}=\mathbf{BM}_{k+1}-\mathbf{K}_{k+1}\mathbf{HBM}_{k+1}+\mathbf{K}_{k+1}$. From \eqref{eqn: linear y withdf} with $\mathbf{D}=\mathbf{0}_{p\times m}$, and \eqref{eqn: kfwithoutdf predict}-\eqref{eqn: kfwithoutdf update x}, the state transition equation for I-KF-without-DF is
\par\noindent\small
\begin{align}
\label{eqn: state for kfwithoutdf}
\hat{\mathbf{x}}_{k+1}=\widetilde{\mathbf{F}}_{k}\hat{\mathbf{x}}_{k}+\mathbf{E}_{k}\mathbf{Hx}_{k+1}+\mathbf{E}_{k}\mathbf{v}_{k+1}.
\end{align}
\normalsize
Unlike the state transition \eqref{eqn: state transition ekf without df input} and \eqref{eqn: state transition ekf without df state} of I-EKF-without-DF, the state transition for I-KF-without-DF is not an explicit function of the forward filter input estimate and hence, an augmented state is not needed. The difference arises from the forward EKF-without-DF, where the current input estimate explicitly depends on the previous input estimates as observed in \eqref{eqn: ekfwithoutdf update u}, which is not the case in KF-without-DF. The I-KF-without-DF's recursions with observation \eqref{eqn: linear a} %to obtain the estimate $\doublehat{\mathbf{x}}_{k+1}$ using the observation \eqref{eqn: linear a} 
are:
\par\noindent\small
\begin{align}
&\textit{Prediction:} \;
\doublehat{\mathbf{x}}_{k+1|k}=\widetilde{\mathbf{F}}_{k}\doublehat{\mathbf{x}}_{k}+\mathbf{E}_{k}\mathbf{Hx}_{k+1},\label{eqn: inverse kfwithoutdf state predict}\\
&\overline{\bm{\Sigma}}_{k+1|k}=\widetilde{\mathbf{F}}_{k}\overline{\bm{\Sigma}}_{k}\widetilde{\mathbf{F}}_{k}^{T}+\overline{\mathbf{Q}}_{k},\label{eqn: inverse kfwithoutdf covariance predict}\\
&\textit{Update:}\;\overline{\mathbf{S}}_{k+1}=\mathbf{G}\overline{\bm{\Sigma}}_{k+1|k}\mathbf{G}^{T}+\bm{\Sigma}_{\epsilon},\label{eqn: inverse kfwithoutdf gain}\\
&\doublehat{\mathbf{x}}_{k+1}=\doublehat{\mathbf{x}}_{k+1|k}+\overline{\bm{\Sigma}}_{k+1|k}\mathbf{G}^{T}\overline{\mathbf{S}}_{k+1}^{-1}(\mathbf{a}_{k+1}-\mathbf{G}\doublehat{\mathbf{x}}_{k+1|k}),\label{eqn: inverse kfwithoutdf state update}\\
&\overline{\bm{\Sigma}}_{k+1}=\overline{\bm{\Sigma}}_{k+1|k}-\overline{\bm{\Sigma}}_{k+1|k}\mathbf{G}^{T}\overline{\mathbf{S}}_{k+1}^{-1}\mathbf{G}\overline{\bm{\Sigma}}_{k+1|k},\label{eqn: inverse kfwithoutdf covariance update}
\end{align}
\normalsize
where (inverse) process noise covariance matrix $\overline{\mathbf{Q}}_{k}=\mathbf{E}_{k}\mathbf{R}\mathbf{E}_{k}^{T}$.

\subsection{I-KF-with-DF}
\label{subsec:kfwithdf}
Consider the linear system model with DF given by \eqref{eqn: linear x with input} and \eqref{eqn: linear y withdf}. 
\subsubsection{Forward filter}
Denote the state estimation covariance, input estimation (without delay) covariance, and cross-covariance of state and input estimates by $\bm{\Sigma}^{x}_{k}$, $\bm{\Sigma}^{u}_{k}$ and $\bm{\Sigma}^{xu}_{k}$, respectively. The forward KF-with-DF is  \cite{gillijns2007kfb}:
\par\noindent\small
\begin{align}
&\textit{Prediction:}\;\hat{\mathbf{x}}_{k+1|k}=\mathbf{F}\hat{\mathbf{x}}_{k}+\mathbf{B}\hat{\mathbf{u}}_{k},\label{eqn: kfwithdf predict}\\
&\bm{\Sigma}^{x}_{k+1|k}=\begin{bmatrix}
\mathbf{F} & \mathbf{B}
\end{bmatrix}\begin{bmatrix}
\bm{\Sigma}^{x}_{k} & \bm{\Sigma}^{xu}_{k}\\
\bm{\Sigma}^{ux}_{k} & \bm{\Sigma}^{u}_{k}
\end{bmatrix}\begin{bmatrix}
\mathbf{F}^{T}\\
\mathbf{B}^{T}
\end{bmatrix}+\mathbf{Q},\nonumber\\
&\textit{Gain computation:}\;\mathbf{S}_{k+1}=\mathbf{H}\bm{\Sigma}^{x}_{k+1|k}\mathbf{H}^{T}+\mathbf{R},\nonumber\\
&\mathbf{M}_{k+1}=(\mathbf{D}^{T}\mathbf{S}_{k+1}^{-1}\mathbf{D})^{-1}\mathbf{D}^{T}\mathbf{S}_{k+1}^{-1},\;\;\;\mathbf{K}_{k+1}=\bm{\Sigma}^{x}_{k+1|k}\mathbf{H}^{T}\mathbf{S}_{k+1}^{-1},\nonumber\\
&\textit{Update:}\;\hat{\mathbf{u}}_{k+1}=\mathbf{M}_{k+1}(\mathbf{y}_{k+1}-\mathbf{H}\hat{\mathbf{x}}_{k+1|k}),\label{eqn: kfwithdf update u}\\
&\hat{\mathbf{x}}_{k+1}=\hat{\mathbf{x}}_{k+1|k}+\mathbf{K}_{k+1}(\mathbf{y}_{k+1}-\mathbf{H}\hat{\mathbf{x}}_{k+1|k}-\mathbf{D}\hat{\mathbf{u}}_{k+1}),\label{eqn: kfwithdf update x}\\
&\textit{Covariance updates:}\;\bm{\Sigma}^{u}_{k+1}=(\mathbf{D}^{T}\mathbf{S}_{k+1}^{-1}\mathbf{D})^{-1},\nonumber\\
&\bm{\Sigma}^{x}_{k+1}=\bm{\Sigma}^{x}_{k+1|k}-\mathbf{K}_{k+1}(\mathbf{S}_{k+1}-\mathbf{D}\bm{\Sigma}^{u}_{k+1}\mathbf{D}^{T})\mathbf{K}_{k+1}^{T},\nonumber\\
&\bm{\Sigma}^{xu}_{k+1}=(\bm{\Sigma}^{ux}_{k+1})^{T}=-\mathbf{K}_{k+1}\mathbf{D}\bm{\Sigma}^{u}_{k+1}.\nonumber
\end{align}
\normalsize
The forward filter exists if $\textrm{rank}(\mathbf{D})=m$ (which implies $p\geq m$). %Again, the gain matrices $\mathbf{K}_{k+1}$ and $\mathbf{M}_{k+1}$ are deterministic and completely determined by the system model and initial covariance matrices estimates.
\subsubsection{Inverse filter}
Consider an augmented state vector $\mathbf{z}_{k}=\begin{bmatrix}
\hat{\mathbf{x}}_{k}^{T} & \hat{\mathbf{u}}_{k}^{T}
\end{bmatrix}^{T}$. Denote $\widetilde{\mathbf{F}}_{k}=(\mathbf{I}_{n\times n}-\mathbf{K}_{k+1}\mathbf{H}+\mathbf{K}_{k+1}\mathbf{DM}_{k+1}\mathbf{H})\mathbf{F}$, $\widetilde{\mathbf{B}}_{k}=(\mathbf{I}_{n\times n}-\mathbf{K}_{k+1}\mathbf{H}+\mathbf{K}_{k+1}\mathbf{DM}_{k+1}\mathbf{H})\mathbf{B}$, $\mathbf{E}_{k}=\mathbf{K}_{k+1}(\mathbf{I}_{p\times p}-\mathbf{DM}_{k+1})$, $\widetilde{\mathbf{H}}_{k}=-\mathbf{M}_{k+1}\mathbf{HF}$ and $\widetilde{\mathbf{D}}_{k}=-\mathbf{M}_{k+1}\mathbf{HB}$. From \eqref{eqn: linear y withdf}, and \eqref{eqn: kfwithdf predict}-% \eqref{eqn: kfwithdf update u}, and 
\eqref{eqn: kfwithdf update x}, the  state transition equations for I-KF-with-DF are
\par\noindent\small
\begin{align*}
&\hat{\mathbf{x}}_{k+1}=\widetilde{\mathbf{F}}_{k}\hat{\mathbf{x}}_{k}+\widetilde{\mathbf{B}}_{k}\hat{\mathbf{u}}_{k}+\mathbf{E}_{k}\mathbf{H}\mathbf{x}_{k+1}+\mathbf{E}_{k}\mathbf{D}\mathbf{u}_{k+1}+\mathbf{E}_{k}\mathbf{v}_{k+1},
\end{align*}\normalsize
and\par\noindent\small
\begin{align*}
&\hat{\mathbf{u}}_{k+1}\nonumber\\
&=\widetilde{\mathbf{H}}_{k}\hat{\mathbf{x}}_{k}+\widetilde{\mathbf{D}}_{k}\hat{\mathbf{u}}_{k}+\mathbf{M}_{k+1}\mathbf{H}\mathbf{x}_{k+1}+\mathbf{M}_{k+1}\mathbf{D}\mathbf{u}_{k+1}+\mathbf{M}_{k+1}\mathbf{v}_{k+1}.
\end{align*}
\normalsize
Also, $\begin{bmatrix}
(\mathbf{E}_{k}\mathbf{v}_{k+1})^{T} & (\mathbf{M}_{k+1}\mathbf{v}_{k+1})^{T}
\end{bmatrix}^{T}$ is the augmented noise vector involved in this state transition with noise covariance matrix $\overline{\mathbf{Q}}_{k}=\begin{bmatrix}
\mathbf{E}_{k}\mathbf{R}\mathbf{E}_{k}^{T} & \mathbf{E}_{k}\mathbf{R}\mathbf{M}_{k+1}^{T}\\
\mathbf{M}_{k+1}\mathbf{R}\mathbf{E}_{k}^{T} & \mathbf{M}_{k+1}\mathbf{R}\mathbf{M}_{k+1}^{T}
\end{bmatrix}$. Then, \textit{ceteris paribus}, following similar steps as in I-KF-without-DF, the I-KF-with-DF computes the estimate $\hat{\mathbf{z}}_{k}=\begin{bmatrix}
\doublehat{\mathbf{x}}_{k}^{T} & \doublehat{\mathbf{u}}_{k}^{T}
\end{bmatrix}^{T}$ of the augmented state vector using the observation $\mathbf{a}_{k}$ given by \eqref{eqn: linear a}. The system matrices for the augmented state are $\widetilde{\mathbf{F}}^{z}_{k}=\begin{bmatrix}
\widetilde{\mathbf{F}}_{k} & \widetilde{\mathbf{B}}_{k}\\
\widetilde{\mathbf{H}}_{k} & \widetilde{\mathbf{D}}_{k}
\end{bmatrix}$ and $\overline{\mathbf{G}}=\begin{bmatrix}
\mathbf{G} & \mathbf{0}_{n_{a}\times m}
\end{bmatrix}$. The I-KF-with-DF predicts the augmented state as
\par\noindent\small
\begin{align*}
&\doublehat{\mathbf{x}}_{k+1|k}=\widetilde{\mathbf{F}}_{k}\doublehat{\mathbf{x}}_{k}+\widetilde{\mathbf{B}}_{k}\doublehat{\mathbf{u}}_{k}+\mathbf{E}_{k}\mathbf{Hx}_{k+1}+\mathbf{E}_{k}\mathbf{Du}_{k+1},\\
&\doublehat{\mathbf{u}}_{k+1|k}=\widetilde{\mathbf{H}}_{k}\doublehat{\mathbf{x}}_{k}+\widetilde{\mathbf{D}}_{k}\doublehat{\mathbf{u}}_{k}+\mathbf{M}_{k+1}\mathbf{Hx}_{k+1}+\mathbf{M}_{k+1}\mathbf{Du}_{k+1},\\
&\hat{\mathbf{z}}_{k+1|k}=\begin{bmatrix}
\doublehat{\mathbf{x}}_{k+1|k}^{T} & \doublehat{\mathbf{u}}_{k+1|k}^{T}
\end{bmatrix}^{T},\;
\overline{\bm{\Sigma}}_{k+1|k}=\widetilde{\mathbf{F}}^{z}_{k}\overline{\bm{\Sigma}}_{k}(\widetilde{\mathbf{F}}^{z}_{k})^{T}+\overline{\mathbf{Q}}_{k},
\end{align*}
\normalsize
followed by the update procedure \eqref{eqn: inverse kfwithoutdf gain}-\eqref{eqn: inverse kfwithoutdf covariance update} with $\mathbf{G}$ and $\doublehat{x}_{k+1}$ replaced by $\overline{\mathbf{G}}$ and $\hat{\mathbf{z}}_{k+1}$, respectively.

\begin{remark}
Since the observation $\mathbf{y}_{k}$ explicitly depends on the unknown input $\mathbf{u}_{k}$ for a system with DF, I-KF-with-DF and I-EKF-with-DF require perfect knowledge of the current input $\mathbf{u}_{k}$ as a known exogenous input to obtain their state and input estimates, which is not the case in I-KF-without-DF and I-EKF-without-DF.
\end{remark}
\begin{remark}\label{remark:unknown input diff}
Note that the I-KFs with unknown inputs for linear system models are not special cases of I-EKFs with unknown inputs for non-linear system models. In I-KF-with-unknown-inputs, the adversary employs a forward KF which provides unbiased minimum variance estimates of the state and the unknown inputs\cite{gillijns2007unknownkf,gillijns2007kfb}. On the other hand, in I-EKF-with-unknown-inputs, the forward filter's state and unknown inputs estimates are computed based on a weighted least squared error criterion\cite{pan2010applying,yang2007adaptive}. The different forward filters employed by the adversary results in different inverse filters for the defender to estimate the adversary's state estimate.
\end{remark}
\begin{remark}\label{remark:linear complexity}
As mentioned in Remark~\ref{remark:computation complexity}, the I-KFs with unknown inputs are also derived from standard KF recursions and have similar computational complexity. However, I-KF-without-DF is not formulated using an augmented state and hence, is computationally less complex than I-KF-with-DF.
\end{remark}

\vspace{-8pt}
\section{Performance Analyses}
\label{sec:stability}
For continuous-time non-linear Kalman filtering, some convergence results were mentioned in \cite{krener2003convergenceOfEKF}. In case of EKF, sufficient conditions for stability of non-linear systems with linear output map were described in \cite{la1995conditionsforEKFforfreq}. Recently, the stability of deterministic EKF was studied based on contraction theory in \cite{bonnabel2014contraction}. The asymptotic convergence of EKF for a special class of systems, where EKF is applied for joint state and parameter estimation of linear stochastic systems, was studied in \cite{ljung1979asymptotic,ursin1980asymptotic}. If the non-linearities have known bounds, then the Riccati equation is slightly modified %in a simple way 
to guarantee stability for the continuous-time EKF \cite{reif1998ekf}.

To derive the sufficient conditions for stochastic stability of non-linear filters, % can be obtained by extending the stability results of the corresponding forward filter for the inverse filter dynamics. In Section \ref{subsec:kfstable}, the stability and convergence results of the standard KF\cite{bertsekas1995dynamic} are extended to the inverse filters developed for KF with unknown inputs in Section \ref{sec:kfunknown}. 
%recall the following definitions. %The two definitions for boundedness of a stochastic process, relevant for non-linear filtering problems, are restated here along with a lemma providing sufficient conditions for boundedness of the stochastic process.
one of the common approaches is to introduce unknown instrumental matrices to account for the linearization errors \cite{xiong2006performance_ukf}. It does not assume any bound on the estimation error, but its sufficient conditions for stability, especially the bounds assumed on the unknown matrices, are difficult to verify for practical systems.

Alternatively, \cite{reif1999stochastic} considers the one-step prediction formulation of the filter and provides sufficient conditions under which the state prediction error is \textit{exponentially bounded in mean-squared} sense. We restate some definitions and a useful Lemma from \cite{reif1999stochastic}.

\begin{definition}[Exponential mean-squared boundedness \cite{reif1999stochastic}]\label{defn:exponential boundedness} A stochastic process $\{\bm{\zeta}_{k} \}_{k \geq 0}$ is defined to be exponentially bounded in mean-squared sense if there are real numbers $\eta,\nu>0$ and $0<\lambda<1$ such that 
%\par\noindent\small
%\begin{align*}
$\mathbb{E}\left[\|\bm{\zeta}_{k}\|_{2}^{2}\right]\leq \eta\mathbb{E}\left[\|\bm{\zeta}_{0}\|_{2}^{2}\right]\lambda^{k}+\nu$  
%\end{align*}
%\normalsize
holds for every $k\geq 0$.
\end{definition}
\begin{definition}[Boundedness with probability one \cite{reif1999stochastic}] A stochastic process $\{\bm{\zeta}_{k} \}_{k \geq 0}$  is defined to be bounded with probability one if 
%\par\noindent\small
%\begin{align*}
$\sup_{k\geq 0}\|\bm{\zeta}_{k}\|_{2} < \infty$ 
%\end{align*}
%\normalsize
holds with probability one.  
\end{definition}
\begin{lemma}[Boundedness of stochastic process {\cite[Lemma 2.1]{reif1999stochastic}}]
\label{lemma:exponential boundedness}
Consider a function $V_{k}(\bm{\zeta}_{k})$ of the stochastic process $\bm{\zeta}_{k}$ and real numbers $v_{\textrm{min}}$, $v_{\textrm{max}}$, $\mu>0$, and $0<\lambda\leq 1$ such that for all $k\geq 0$
\par\noindent\small
\begin{align*}
v_{\textrm{min}}\|\bm{\zeta}_{k}\|_{2}^{2}\leq V_{k}(\bm{\zeta}_{k})\leq v_{\textrm{max}}\|\bm{\zeta}_{k}\|_{2}^{2},
\end{align*}
and
\begin{align*}
\mathbb{E}\left[ V_{k+1}(\bm{\zeta}_{k+1})|\bm{\zeta}_{k}\right]-V_{k}(\bm{\zeta}_{k})\leq\mu-\lambda V_{k}(\bm{\zeta}_{k}).
\end{align*}
\normalsize
Then, the stochastic process $\{\bm{\zeta}_{k}\}_{k \geq 0}$ is exponentially bounded in mean-squared sense, i.e.,
\par\noindent\small
\begin{align*}
\mathbb{E}\left[\|\bm{\zeta}_{k}\|_{2}^{2}\right]\leq\frac{v_{\textrm{max}}}{v_{\textrm{min}}}\mathbb{E}\left[\|\bm{\zeta}_{0}\|_{2}^{2}\right](1-\lambda)^{k}+\frac{\mu}{v_{\textrm{min}}}\sum_{i=1}^{k-1}(1-\lambda)^{i},
\end{align*}
\normalsize
for every $k\geq 0$. Further, $\{\bm{\zeta}_{k}\}_{k \geq 0}$ is also bounded with probability one.
\end{lemma}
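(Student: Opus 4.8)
Write $\mathcal{F}_k$ for the natural filtration of $\{\bm{\zeta}_k\}$ and abbreviate $V_k := V_k(\bm{\zeta}_k)$. The plan is to reduce the mean-squared claim to a scalar linear recursion, which is essentially immediate, and then to handle the almost-sure claim by exhibiting a supermartingale; the latter is where all the difficulty lies. For the mean-squared part, take total expectations in the drift hypothesis $\mathbb{E}[V_{k+1}\mid\bm{\zeta}_k] - V_k \le \mu - \lambda V_k$ and use the tower property to obtain the deterministic recursion $\mathbb{E}[V_{k+1}] \le (1-\lambda)\mathbb{E}[V_k] + \mu$ for all $k \ge 0$, with $1-\lambda \in [0,1)$ since $0 < \lambda \le 1$. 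Unrolling this recursion gives $\mathbb{E}[V_k] \le (1-\lambda)^k\mathbb{E}[V_0] + \mu\sum_{i=0}^{k-1}(1-\lambda)^i$, where the geometric sum is bounded by $1/\lambda$ uniformly in $k$. Finally, the sandwich $v_{\textrm{min}}\|\bm{\zeta}_k\|_2^2 \le V_k \le v_{\textrm{max}}\|\bm{\zeta}_k\|_2^2$ gives $\mathbb{E}[\|\bm{\zeta}_k\|_2^2] \le \mathbb{E}[V_k]/v_{\textrm{min}}$ and $\mathbb{E}[V_0] \le v_{\textrm{max}}\mathbb{E}[\|\bm{\zeta}_0\|_2^2]$, and substituting into the previous line produces the displayed estimate.

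Because $\sum_{i\ge 0}(1-\lambda)^i = 1/\lambda$, the bound just obtained has the shape $\eta\,\mathbb{E}[\|\bm{\zeta}_0\|_2^2]\,\lambda_\ast^{\,k} + \nu$ with $\eta = v_{\textrm{max}}/v_{\textrm{min}}$, $\nu = \mu/(v_{\textrm{min}}\lambda)$, and decay rate $\lambda_\ast = 1-\lambda$ (replaced by any number in $(0,1)$ in the degenerate case $\lambda = 1$), so $\{\bm{\zeta}_k\}$ is exponentially bounded in the mean-squared sense in the sense of Definition~1. This half of the lemma uses nothing beyond the two stated hypotheses and amounts to a two-line computation.

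The almost-sure boundedness is the main obstacle. A direct computation from the drift hypothesis shows that $M_k := (1-\lambda)^{-k}\bigl(V_k - \mu/\lambda\bigr)$ is a supermartingale, i.e.\ $\mathbb{E}[M_{k+1}\mid\mathcal{F}_k] \le M_k$. The trouble is that $M_k$ is in general neither bounded below (it is negative on $\{V_k < \mu/\lambda\}$) nor bounded in $L^1$ (the weight $(1-\lambda)^{-k}$ diverges), so Doob's supermartingale convergence theorem does not apply off the shelf. The intended route is therefore an excursion argument: on any time interval during which $V_k > \mu/\lambda$ the stopped version of $M_k$ is a nonnegative supermartingale, so a maximal inequality controls the probability that $V$ climbs to a high level within that excursion, while between excursions $V_k \le \mu/\lambda$ is trivially bounded; aggregating these estimates over all excursions together with a Borel--Cantelli step yields $\sup_k V_k < \infty$ almost surely, and then $v_{\textrm{min}}\|\bm{\zeta}_k\|_2^2 \le V_k$ transfers the conclusion to $\{\bm{\zeta}_k\}$. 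Carrying this stopping-time bookkeeping through rigorously is precisely the delicate step performed in \cite{reif1999stochastic}; it is the one worth scrutinizing, since some adaptedness/structure on how $V_{k+1}$ is generated from $\bm{\zeta}_k$ is what prevents the drift inequality from being compatible with rare but ever-growing excursions.
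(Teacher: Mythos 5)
First, a framing point: the paper does not prove this lemma at all --- it is restated, with explicit attribution, from \cite[Lemma 2.1]{reif1999stochastic}, so there is no in-paper proof to compare your argument against. Your treatment of the first assertion is correct and is the standard argument: total expectation and the tower property turn the drift condition into $\mathbb{E}[V_{k+1}]\le(1-\lambda)\mathbb{E}[V_k]+\mu$, unrolling gives $\mathbb{E}[V_k]\le(1-\lambda)^k\mathbb{E}[V_0]+\mu\sum_{i=0}^{k-1}(1-\lambda)^i$, and the sandwich bounds $v_{\textrm{min}}\|\bm{\zeta}_k\|_2^2\le V_k\le v_{\textrm{max}}\|\bm{\zeta}_k\|_2^2$ convert this into the displayed estimate. (The only discrepancy is cosmetic: your geometric sum correctly starts at $i=0$, whereas the lemma as printed starts at $i=1$; this indexing quirk is inherited from the source and does not affect the conclusion of exponential mean-squared boundedness.)

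The second assertion --- boundedness with probability one --- is where your proposal has a genuine gap. You correctly exhibit the supermartingale $M_k=(1-\lambda)^{-k}\bigl(V_k-\mu/\lambda\bigr)$ and correctly observe that Doob's convergence theorem does not apply directly, but the excursion-plus-Borel--Cantelli repair you outline does not close as stated. For the $j$-th excursion above level $\mu/\lambda$, starting at time $k_j$, the maximal inequality for the stopped nonnegative supermartingale bounds the probability of $V$ reaching level $N$ during that excursion by $M_{k_j}$ divided by the threshold $(1-\lambda)^{-k_j}(N-\mu/\lambda)$; the exponential weights cancel, leaving a bound of order $(V_{k_j}-\mu/\lambda)/(N-\mu/\lambda)$. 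These per-excursion bounds do not decay in $j$ and need not be summable over the (possibly infinitely many) excursions unless the entry overshoots $V_{k_j}$ are themselves controlled --- which is essentially what you are trying to prove, since the drift hypothesis constrains only conditional means and cannot rule out large jumps out of $\{V_k\le\mu/\lambda\}$. You flag this step yourself and defer it to \cite{reif1999stochastic} (which in turn invokes a classical stochastic-stability theorem of Kushner type rather than a bare Borel--Cantelli argument). Deferring is exactly what the paper does by citing the lemma, so this is acceptable in context, but read as a standalone proof your proposal establishes only the first of the two assertions.
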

\begin{remark}\label{remark:reif result remark}
In the bounded mean-squared sense, \cite[Sec. III]{reif1999stochastic} showed that, while the two-step prediction and update recursion (described in previous sections) and one-step formulation of (forward) filters may differ in their performance and transient behaviour, they have similar convergence properties. %\textcolor{red}{what does this mean?}\textcolor{blue}{Sir, it means that if the one-step formulation of the filter is not diverging for some system, then the two-step one will also not. Hence, they work with one-step formulation which is easier to analyse. It is mentioned in the Reif paper (not shown), but has been used in other stability papers as well that we had earlier mentioned in the introduction.}
However, the conditions of Lemma \ref{lemma:exponential boundedness} were proved to hold when the error remained within suitable bounds; the guarantees fail if the error exceeds this bound at any instant. However, it was numerically shown \cite[Sec. V]{reif1999stochastic} that the bound on the error was only of theoretical interest and, in practice, the filter remained stable for much larger estimation errors.
\end{remark}

In the following, we first derive stability conditions for I-KF-without-DF in which we rely on the stability of the forward KF-without-DF as proved in \cite{fang2012on}. The procedure is similar for the stability of I-KF-with-DF and I-KF-without-unknown-input \cite{krishnamurthy2019how} and hence, we omit the details for these filters. For I-EKF stability, we employ both unknown matrix and bounded non-linearity approaches. In the process, we also derive the forward EKF stability conditions using unknown matrix approach; note that the same was obtained using bounded non-linearity method in \cite{reif1999stochastic}. Finally, we provide conditions for the consistency of the I-EKF’s estimates. The procedure is similar for the consistency of other proposed filters considering their respective augmented states and hence, we omit the details here.
 %It is possible to extend these results to a general class of Gaussian filters \cite{wu2007comments}, of which EKF is a special case, whose estimation error dynamics are represented by \cite[eq. (2)]{xiong2007authorreply}. %The Gaussian filters are those Bayesian filters which approximates the probability densities involved as Gaussian; this class includes EKF.

\subsection{I-KF-with-unknown-input}
\label{subsec:kfstable}
Consider I-KF-without-DF of Section \ref{subsec:kfwithoutdf}, where the forward filter is asymptotically stable under the sufficient conditions provided by \cite{fang2012on}. The following Theorem~\ref{theorem: inverse kf without DF} states conditions for stability of the inverse filter.
\begin{theorem} [Stability of I-KF-without-DF]
\label{theorem: inverse kf without DF}
Consider an asymptotically stable forward KF-without-DF \eqref{eqn: kfwithoutdf predict}-\eqref{eqn: kfwithoutdf sigma update} such that the gain matrices $\mathbf{M}_{k}$ and $\mathbf{K}_{k}$ asymptotically approach to limiting gain matrices $\overline{\mathbf{M}}$ and $\overline{\mathbf{K}}$, respectively. The measurement noise covariance matrix $\bm{\Sigma}_{\epsilon}$ is positive definite (p.d.). Denote the limiting matrices %$\overline{\mathbf{F}}$ and $\overline{\mathbf{Q}}$ are the limiting matrices given as
$\overline{\mathbf{F}}=(\mathbf{I}-\overline{\mathbf{K}}\mathbf{H})(\mathbf{I}-\mathbf{B}\overline{\mathbf{M}}\mathbf{H})\mathbf{F}$ and $\overline{\mathbf{Q}}=\overline{\mathbf{E}}\mathbf{R}\overline{\mathbf{E}}^{T}$, where $\overline{\mathbf{E}}=\mathbf{B}\overline{\mathbf{M}}-\overline{\mathbf{K}}\mathbf{HB}\overline{\mathbf{M}}+\overline{\mathbf{K}}$. Then, the I-KF-without-DF \eqref{eqn: inverse kfwithoutdf state predict}-\eqref{eqn: inverse kfwithoutdf covariance update} is asymptotically stable under the assumption that pair ($\overline{\mathbf{F}}$,$\mathbf{G}$) is observable and the pair ($\overline{\mathbf{F}}$,$\mathbf{C}$) is controllable for the system given by \eqref{eqn: linear a} and \eqref{eqn: state for kfwithoutdf}, where $\mathbf{C}$ is such that $\overline{\mathbf{Q}}=\mathbf{C}^{T}\mathbf{C}$. 
\end{theorem}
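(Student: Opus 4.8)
The plan is to recognize the I-KF-without-DF as an ordinary linear time-varying Kalman filter and then to import the classical stability theory of such filters, using the hypothesized asymptotic stability of the forward KF-without-DF to control the transient behaviour. Treating $\hat{\mathbf{x}}_k$ as the state to be estimated, the pair \eqref{eqn: linear a}, \eqref{eqn: state for kfwithoutdf} is a linear model with transition matrix $\widetilde{\mathbf{F}}_k$, the perfectly known term $\mathbf{E}_k\mathbf{Hx}_{k+1}$ acting as a known exogenous input, process noise $\mathbf{E}_k\mathbf{v}_{k+1}$ of covariance $\overline{\mathbf{Q}}_k=\mathbf{E}_k\mathbf{R}\mathbf{E}_k^T$, observation matrix $\mathbf{G}$, and measurement noise of covariance $\bm{\Sigma}_\epsilon$; recursions \eqref{eqn: inverse kfwithoutdf state predict}--\eqref{eqn: inverse kfwithoutdf covariance update} are precisely the Kalman recursions for this model. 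First I would form the error $\mathbf{e}_k\doteq\doublehat{\mathbf{x}}_k-\hat{\mathbf{x}}_k$; the exogenous term and the dependence on the true state $\mathbf{x}_{k+1}$ cancel, leaving $\mathbf{e}_{k+1}=(\mathbf{I}-\mathbf{L}_{k+1}\mathbf{G})\widetilde{\mathbf{F}}_k\,\mathbf{e}_k+\mathbf{n}_{k+1}$ with gain $\mathbf{L}_{k+1}=\overline{\bm{\Sigma}}_{k+1|k}\mathbf{G}^T\overline{\mathbf{S}}_{k+1}^{-1}$ and a zero-mean noise $\mathbf{n}_{k+1}$ of uniformly bounded variance. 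Thus asymptotic stability of the inverse filter reduces to uniform exponential stability of the homogeneous recursion driven by $\mathbf{A}_k\doteq(\mathbf{I}-\mathbf{L}_{k+1}\mathbf{G})\widetilde{\mathbf{F}}_k$ together with uniform boundedness of $\overline{\bm{\Sigma}}_k$.

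Next I would pass to the limiting model. Asymptotic stability of the forward filter gives $\mathbf{M}_k\to\overline{\mathbf{M}}$ and $\mathbf{K}_k\to\overline{\mathbf{K}}$, whence $\widetilde{\mathbf{F}}_k\to\overline{\mathbf{F}}$, $\mathbf{E}_k\to\overline{\mathbf{E}}$ and $\overline{\mathbf{Q}}_k\to\overline{\mathbf{Q}}$, while $\mathbf{G}$ and $\bm{\Sigma}_\epsilon$ are already constant, so the limiting system is the time-invariant triple $(\overline{\mathbf{F}},\mathbf{G},\overline{\mathbf{Q}})$ with positive-definite measurement noise. Under the assumed observability of $(\overline{\mathbf{F}},\mathbf{G})$ and controllability of $(\overline{\mathbf{F}},\mathbf{C})$ with $\overline{\mathbf{Q}}=\mathbf{C}^T\mathbf{C}$, the corresponding discrete-time algebraic Riccati equation has a unique positive-definite stabilizing solution $\overline{\bm{\Sigma}}$, and the limiting error matrix $\overline{\mathbf{A}}\doteq(\mathbf{I}-\overline{\mathbf{L}}\,\mathbf{G})\overline{\mathbf{F}}$, with $\overline{\mathbf{L}}$ the limiting Kalman gain, is Schur stable; this is the classical discrete-time Kalman filter stability result \cite{anderson2012optimal}. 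The convergence of the parameters, combined with the limiting observability/controllability and the continuity of the observability and controllability Gramians over a fixed-length window, then yields uniform detectability and uniform stabilizability of the inverse model for all $k$ past some $k_0$; by the monotonicity and comparison properties of the Riccati recursion this keeps $\overline{\bm{\Sigma}}_k$ uniformly bounded above and below by positive-definite matrices and forces $\overline{\bm{\Sigma}}_k\to\overline{\bm{\Sigma}}$, hence $\mathbf{L}_k\to\overline{\mathbf{L}}$ and $\mathbf{A}_k\to\overline{\mathbf{A}}$. Since a linear time-varying recursion whose transition matrix converges to a Schur-stable constant matrix is uniformly exponentially stable, the homogeneous error dynamics are exponentially stable, and together with the two-sided bound on $\overline{\bm{\Sigma}}_k$ this establishes the asserted asymptotic stability of the I-KF-without-DF. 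The identical scheme, \emph{mutatis mutandis}, handles I-KF-with-DF (with the augmented $\widetilde{\mathbf{F}}^{z}_k\to\overline{\mathbf{F}}^{z}$ in place of $\widetilde{\mathbf{F}}_k\to\overline{\mathbf{F}}$) and I-KF-without-unknown-input \cite{krishnamurthy2019how}, which is why those proofs can be suppressed.

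I expect the main obstacle to be precisely the transition in the second paragraph from the merely asymptotic convergence of the forward gains to the \emph{uniform} detectability and stabilizability --- and hence the uniform two-sided bounds on $\overline{\bm{\Sigma}}_k$ and its convergence --- needed to stabilize the time-varying inverse filter, because the window bounds must be made uniform in $k$. This is what blocks a one-line appeal to the time-invariant theory and forces the Gramian-continuity-plus-Riccati-comparison route; everything else is routine Kalman-filter bookkeeping.
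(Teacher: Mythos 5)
Your proposal is correct and follows essentially the same route as the paper's own proof: identify the inverse filter as a standard Kalman filter for the linear system \eqref{eqn: state for kfwithoutdf}, \eqref{eqn: linear a}, pass to the limiting time-invariant triple $(\overline{\mathbf{F}},\mathbf{G},\overline{\mathbf{Q}})$, invoke the classical observability/controllability conditions to get a unique p.d. DARE solution $\overline{\bm{\Sigma}}$ with Schur-stable closed-loop matrix $\overline{\mathbf{F}}-\overline{\mathbf{F}}\overline{\bm{\Sigma}}\mathbf{G}^{T}(\mathbf{G}\overline{\bm{\Sigma}}\mathbf{G}^{T}+\overline{\mathbf{R}})^{-1}\mathbf{G}$, and read off asymptotic stability of the error dynamics. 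The only difference is that the paper simply substitutes the limiting matrices and argues directly in the limiting case (citing the standard time-invariant result), whereas you additionally spell out the uniform detectability/stabilizability and Riccati-comparison argument needed to justify that passage rigorously --- a refinement of, not a departure from, the paper's argument.
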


\begin{proof}
See Appendix~\ref{App-thm-inverse kf without DF}.
\end{proof}

Note that, for I-KF-with-DF's stability, the stability conditions of basic KF need to hold for the augmented state considered in inverse filter formulation of Section \ref{subsec:kfwithdf}. For forward KF-with-DF's stability conditions, we refer the reader to \cite{fang2012on}.

\subsection{I-EKF-without-unknown-input: Unknown matrix approach}
\label{subsec:ekf stable unknown}

%\subsubsection{Unknown matrix approach}\label{subsubsec: ekf stable unknown}
%We first derive the sufficient conditions for stochastic stability of the forward EKF using the unknown matrix approach and then, extend the results to I-EKF's dynamics. 
Consider the I-EKF's two-step prediction and update formulation of Section~\ref{subsec:ekf}, with forward filter as EKF-without-unknown-input.

%\textbf{Forward EKF stability:} 
\subsubsection{Forward EKF stability} 
Denote the forward EKF's state prediction, state estimation and measurement prediction errors by $\widetilde{\mathbf{x}}_{k+1|k}\doteq\mathbf{x}_{k+1}-\hat{\mathbf{x}}_{k+1|k}$, $\widetilde{\mathbf{x}}_{k}\doteq\mathbf{x}_{k}-\hat{\mathbf{x}}_{k}$ and $\widetilde{\mathbf{y}}_{k}\doteq\mathbf{y}_{k}-\hat{\mathbf{y}}_{k}$, with $\hat{\mathbf{y}}_{k}=h(\hat{\mathbf{x}}_{k|k-1})$, respectively. Using \eqref{eqn: ekf x}, \eqref{eqn: ekf predict} and the Taylor series expansion of $f(\cdot)$ at $\hat{\mathbf{x}}_{k}$, we get
\par\noindent\small
\begin{align*}
    \widetilde{\mathbf{x}}_{k+1|k}=\mathbf{F}_{k}(\mathbf{x}_{k}-\hat{\mathbf{x}}_{k})+\mathbf{w}_{k}+\mathcal{O}(\|\mathbf{x}_{k}-\hat{\mathbf{x}}_{k}\|_{2}^{2})\approx\mathbf{F}_{k}\widetilde{\mathbf{x}}_{k}+\mathbf{w}_{k}.
\end{align*}
\normalsize
We consider the general case of time-varying process and measurement noise covariances and denote $\mathbf{Q}$, $\mathbf{R}$ and $\bm{\Sigma}_{\epsilon}$ by $\mathbf{Q}_{k}$, $\mathbf{R}_{k}$ and $\overline{\mathbf{R}}_{k}$, respectively. To account for the residuals and obtain an exact equality, we introduce an unknown instrumental diagonal matrix $\mathbf{U}^{x}_{k} \in\mathbb{R}^{n\times n}$\cite{xiong2006performance_ukf,li2012stochastic_ukf} as
\par\noindent\small
\begin{align}
    \widetilde{\mathbf{x}}_{k+1|k}=\mathbf{U}^{x}_{k}\mathbf{F}_{k}\widetilde{\mathbf{x}}_{k}+\mathbf{w}_{k}\label{eqn:forward EKF predict error with alpha}.
\end{align}
\normalsize
However, using \eqref{eqn: ekf update}, we have $\widetilde{\mathbf{x}}_{k}=\widetilde{\mathbf{x}}_{k|k-1}-\mathbf{K}_{k}\widetilde{\mathbf{y}}_{k}$, which when substituted in \eqref{eqn:forward EKF predict error with alpha} yields $\widetilde{\mathbf{x}}_{k+1|k}=\mathbf{U}^{x}_{k}\mathbf{F}_{k}\widetilde{\mathbf{x}}_{k|k-1}-\mathbf{U}^{x}_{k}\mathbf{F}_{k}\mathbf{K}_{k}\widetilde{\mathbf{y}}_{k}+\mathbf{w}_{k}$.
%\par\noindent\small
%\begin{align*}
 %   \widetilde{\mathbf{x}}_{k+1|k}=\mathbf{U}^{x}_{k}\mathbf{F}_{k}\widetilde{\mathbf{x}}_{k|k-1}-\mathbf{U}^{x}_{k}\mathbf{F}_{k}\mathbf{K}_{k}\widetilde{\mathbf{y}}_{k}+\mathbf{w}_{k}.
%\end{align*}
%\normalsize
Similarly, using Taylor series expansion of $h(\cdot)$ at $\hat{\mathbf{x}}_{k+1|k}$ in \eqref{eqn: non y withoutdf} and introducing an unknown diagonal matrix $\mathbf{U}^{y}_{k+1}\in\mathbb{R}^{p\times p}$ gives $\widetilde{\mathbf{y}}_{k+1}=\mathbf{U}^{y}_{k+1}\mathbf{H}_{k+1}\widetilde{\mathbf{x}}_{k+1|k}+\mathbf{v}_{k+1}$.
%\par\noindent\small
%\begin{align*}
 %   \widetilde{\mathbf{y}}_{k+1}=\mathbf{U}^{y}_{k+1}\mathbf{H}_{k+1}\widetilde{\mathbf{x}}_{k+1|k}+\mathbf{v}_{k+1}.
%\end{align*}
%\normalsize 
The prediction error dynamics of the forward EKF becomes
\par\noindent\small
\begin{align}
    \widetilde{\mathbf{x}}_{k+1|k}=\mathbf{U}^{x}_{k}\mathbf{F}_{k}(\mathbf{I}-\mathbf{K}_{k}\mathbf{U}^{y}_{k}\mathbf{H}_{k})\widetilde{\mathbf{x}}_{k|k-1}-\mathbf{U}^{x}_{k}\mathbf{F}_{k}\mathbf{K}_{k}\mathbf{v}_{k}+\mathbf{w}_{k}.\label{eqn:forward EKF prediction error dynamics}
\end{align}
\normalsize

Denote the true prediction covariance by  $\mathbf{P}_{k+1|k}=\mathbb{E}\left[\widetilde{\mathbf{x}}_{k+1|k}\widetilde{\mathbf{x}}_{k+1|k}^{T}\right]$. Define $\delta\mathbf{P}_{k+1|k}$ as the difference of estimated prediction covariance $\bm{\Sigma}_{k+1|k}$ and the true prediction covariance $\mathbf{P}_{k+1|k}$ while $\Delta\mathbf{P}_{k+1|k}$ as the error in the approximation of the expectation
\small
$\mathbb{E}\left[\mathbf{U}^{x}_{k}\mathbf{F}_{k}(\mathbf{I}-\mathbf{K}_{k}\mathbf{U}^{y}_{k}\mathbf{H}_{k})\widetilde{\mathbf{x}}_{k|k-1}\widetilde{\mathbf{x}}_{k|k-1}^{T}(\mathbf{I}-\mathbf{K}_{k}\mathbf{U}^{y}_{k}\mathbf{H}_{k})^{T}\mathbf{F}_{k}^{T}\mathbf{U}^{x}_{k}\right]$
\normalsize
by $\mathbf{U}^{x}_{k}\mathbf{F}_{k}(\mathbf{I}-\mathbf{K}_{k}\mathbf{U}^{y}_{k}\mathbf{H}_{k})\bm{\Sigma}_{k|k-1}(\mathbf{I}-\mathbf{K}_{k}\mathbf{U}^{y}_{k}\mathbf{H}_{k})^{T}\mathbf{F}_{k}^{T}\mathbf{U}^{x}_{k}$. Denoting $\hat{\mathbf{Q}}_{k}=\mathbf{Q}_{k}+\mathbf{U}^{x}_{k}\mathbf{F}_{k}\mathbf{K}_{k}\mathbf{R}_{k}\mathbf{K}_{k}^{T}\mathbf{F}_{k}^{T}\mathbf{U}^{x}_{k}+\delta\mathbf{P}_{k+1|k}+\Delta\mathbf{P}_{k+1|k}$ and following similar steps as in \cite{xiong2006performance_ukf, li2012stochastic_ukf}, we have
\par\noindent\small
\begin{align*}
    &\bm{\Sigma}_{k+1|k}=\\
    &\mathbf{U}^{x}_{k}\mathbf{F}_{k}(\mathbf{I}-\mathbf{K}_{k}\mathbf{U}^{y}_{k}\mathbf{H}_{k})\bm{\Sigma}_{k|k-1}(\mathbf{I}-\mathbf{K}_{k}\mathbf{U}^{y}_{k}\mathbf{H}_{k})^{T}\mathbf{F}_{k}^{T}\mathbf{U}^{x}_{k}+\hat{\mathbf{Q}}_{k}.
\end{align*}
\normalsize

Similarly, denoting the true measurement prediction covariance and true cross-covariance by $\mathbf{P}^{yy}_{k+1}$ and $\mathbf{P}^{xy}_{k+1}$, respectively, we obtain
\par\noindent\small
\begin{align*}
    \mathbf{S}_{k+1}&=\mathbf{U}^{y}_{k+1}\mathbf{H}_{k+1}\bm{\Sigma}_{k+1|k}\mathbf{H}_{k+1}^{T}\mathbf{U}^{y}_{k+1}+\hat{\mathbf{R}}_{k+1},\\
    \bm{\Sigma}^{xy}_{k+1}&=\begin{cases}\bm{\Sigma}_{k+1|k}\mathbf{U}^{xy}_{k+1}\mathbf{H}_{k+1}^{T}\mathbf{U}^{y}_{k+1}, & n\geq p\\
    \bm{\Sigma}_{k+1|k}\mathbf{H}_{k+1}^{T}\mathbf{U}^{y}_{k+1}\mathbf{U}^{xy}_{k+1}, & n<p\end{cases},
\end{align*}
\normalsize
where $\hat{\mathbf{R}}_{k+1}=\mathbf{R}_{k+1}+\Delta\mathbf{P}^{yy}_{k+1}+\delta\mathbf{P}^{yy}_{k+1}$ and $\mathbf{U}^{xy}_{k+1}$ is an unknown instrumental matrix introduced to account for errors in the estimated cross-covariance $\bm{\Sigma}_{k+1}^{xy}$\cite{xiong2007authorreply}.

The following Theorem~\ref{theorem:Forward ekf stable unknown matrix} provides stability conditions for the forward EKF using the unknown matrices $\mathbf{U}^{x}_{k}$, $\mathbf{U}^{y}_{k}$ and $\mathbf{U}^{xy}_{k}$.
\begin{theorem}[Stochastic stability of forward EKF]
\label{theorem:Forward ekf stable unknown matrix}
Consider the non-linear stochastic system in \eqref{eqn: ekf x} and \eqref{eqn: non y withoutdf}. The two-step forward EKF formulation is as in Section~\ref{subsec:ekf}. Let the following assumptions hold true:
\begin{enumerate}
    \item There exist positive real numbers $\overline{f}$, $\overline{h}$, $\overline{\alpha}$, $\overline{\beta}$, $\overline{\gamma}$, $\underline{\sigma}$, $\overline{\sigma}$, $\overline{q}$, $\overline{r}$, $\hat{q}$ and $\hat{r}$ such that the following bounds are fulfilled for all $k\geq 0$.
    \par\noindent\small
    \begin{align*}
        \|\mathbf{F}_{k}\|\leq\overline{f},&\hspace{0.4cm}\|\mathbf{H}_{k}\|\leq\overline{h},\hspace{0.4cm}\|\mathbf{U}^{x}_{k}\|\leq\overline{\alpha}, \hspace{0.4cm}\|\mathbf{U}^{y}_{k}\|\leq\overline{\beta},\\
        \|\mathbf{U}^{xy}_{k}\|\leq\overline{\gamma},&\hspace{0.4cm}\mathbf{Q}_{k}\preceq\overline{q}\mathbf{I},\hspace{0.5cm}     \mathbf{R}_{k}\preceq\overline{r}\mathbf{I},\hspace{0.6cm}\hat{q}\mathbf{I}\preceq\hat{\mathbf{Q}}_{k},\\
        \hat{r}\mathbf{I}\preceq\hat{\mathbf{R}}_{k},&\hspace{0.4cm}
        \underline{\sigma}\mathbf{I}\preceq\bm{\Sigma}_{k|k-1}\preceq\overline{\sigma}\mathbf{I}.
    \end{align*}
    \normalsize
    \item $\mathbf{U}^{x}_{k}$ and $\mathbf{F}_{k}$ are non-singular for every $k\geq 0$.
\end{enumerate}
Then, the prediction error $\widetilde{\mathbf{x}}_{k|k-1}$ and the estimation error $\widetilde{\mathbf{x}}_{k}$ of the forward EKF are exponentially bounded in mean-squared sense and bounded with probability one provided that the constants satisfy the inequality
\par\noindent\small
\begin{align}
    \overline{\sigma}\overline{\gamma}\overline{h}^{2}\overline{\beta}^{2}<\hat{r}.\label{eqn:inequality on constants}
\end{align}
\normalsize
\end{theorem}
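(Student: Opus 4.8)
\textit{Proof sketch (proposal).} The natural route is to apply Lemma~\ref{lemma:exponential boundedness} twice, once to the prediction error $\widetilde{\mathbf{x}}_{k|k-1}$ and once to the estimation error $\widetilde{\mathbf{x}}_{k}$, using in each case the stochastic Lyapunov function $V_{k}(\bm{\zeta}_{k})\doteq\bm{\zeta}_{k}^{T}\bm{\Sigma}_{k|k-1}^{-1}\bm{\zeta}_{k}$ (resp.\ $\widetilde{\mathbf{x}}_{k}^{T}\bm{\Sigma}_{k}^{-1}\widetilde{\mathbf{x}}_{k}$), exactly as in the bounded-noise analyses of \cite{reif1999stochastic,xiong2006performance_ukf}. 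The quadratic sandwich $v_{\textrm{min}}\|\bm{\zeta}_{k}\|_{2}^{2}\leq V_{k}\leq v_{\textrm{max}}\|\bm{\zeta}_{k}\|_{2}^{2}$ is immediate from the assumed spectral bounds on $\bm{\Sigma}_{k|k-1}$ (and on $\bm{\Sigma}_{k}$, which we must first deduce — see below), giving $v_{\textrm{min}}=1/\overline{\sigma}$ and $v_{\textrm{max}}=1/\underline{\sigma}$. All the work lies in the one-step drift bound $\mathbb{E}[V_{k+1}\mid\bm{\zeta}_{k}]-V_{k}\leq\mu-\lambda V_{k}$ for suitable $\mu>0$ and $0<\lambda\leq 1$.

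For the prediction error, write \eqref{eqn:forward EKF prediction error dynamics} as $\widetilde{\mathbf{x}}_{k+1|k}=\mathbf{A}_{k}\widetilde{\mathbf{x}}_{k|k-1}+\bm{\xi}_{k}$ with $\mathbf{A}_{k}\doteq\mathbf{U}^{x}_{k}\mathbf{F}_{k}(\mathbf{I}-\mathbf{K}_{k}\mathbf{U}^{y}_{k}\mathbf{H}_{k})$ and $\bm{\xi}_{k}\doteq\mathbf{w}_{k}-\mathbf{U}^{x}_{k}\mathbf{F}_{k}\mathbf{K}_{k}\mathbf{v}_{k}$. First I would derive uniform bounds $\|\mathbf{A}_{k}\|\leq\overline{a}$ and $\mathbb{E}[\|\bm{\xi}_{k}\|_{2}^{2}]\leq\overline{\xi}$ from Assumption~1 (using $\mathbf{S}_{k}\succeq\hat{\mathbf{R}}_{k}\succeq\hat{r}\mathbf{I}$ to bound $\|\mathbf{K}_{k}\|$, together with $\mathbf{Q}_{k}\preceq\overline{q}\mathbf{I}$, $\mathbf{R}_{k}\preceq\overline{r}\mathbf{I}$). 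Conditioning on $\widetilde{\mathbf{x}}_{k|k-1}$ and using that the residual is (conditionally) zero-mean gives $\mathbb{E}[V_{k+1}\mid\widetilde{\mathbf{x}}_{k|k-1}]=\widetilde{\mathbf{x}}_{k|k-1}^{T}\mathbf{A}_{k}^{T}\bm{\Sigma}_{k+1|k}^{-1}\mathbf{A}_{k}\widetilde{\mathbf{x}}_{k|k-1}+\textrm{Tr}\!\big(\bm{\Sigma}_{k+1|k}^{-1}\,\mathbb{E}[\bm{\xi}_{k}\bm{\xi}_{k}^{T}\mid\widetilde{\mathbf{x}}_{k|k-1}]\big)$, whose trace term is at most $\mu\doteq\overline{\xi}/\underline{\sigma}$. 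For the quadratic term I would use the (unknown-matrix) Riccati recursion $\bm{\Sigma}_{k+1|k}=\mathbf{A}_{k}\bm{\Sigma}_{k|k-1}\mathbf{A}_{k}^{T}+\hat{\mathbf{Q}}_{k}$ with $\hat{\mathbf{Q}}_{k}\succeq\hat{q}\mathbf{I}$: setting $\mathbf{M}_{k}\doteq\mathbf{A}_{k}\bm{\Sigma}_{k|k-1}^{1/2}$ and applying the identity $\mathbf{M}_{k}^{T}(\mathbf{M}_{k}\mathbf{M}_{k}^{T}+\hat{\mathbf{Q}}_{k})^{-1}\mathbf{M}_{k}=\mathbf{I}-(\mathbf{I}+\mathbf{M}_{k}^{T}\hat{\mathbf{Q}}_{k}^{-1}\mathbf{M}_{k})^{-1}$ together with $\mathbf{M}_{k}^{T}\hat{\mathbf{Q}}_{k}^{-1}\mathbf{M}_{k}\preceq(\overline{\sigma}\,\overline{a}^{2}/\hat{q})\,\mathbf{I}$ yields $\mathbf{A}_{k}^{T}\bm{\Sigma}_{k+1|k}^{-1}\mathbf{A}_{k}\preceq(1-\lambda)\bm{\Sigma}_{k|k-1}^{-1}$ with $\lambda\doteq(1+\overline{\sigma}\,\overline{a}^{2}/\hat{q})^{-1}\in(0,1)$. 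The drift inequality follows, and Lemma~\ref{lemma:exponential boundedness} gives that $\widetilde{\mathbf{x}}_{k|k-1}$ is exponentially bounded in mean-squared sense and bounded with probability one. The estimation error obeys $\widetilde{\mathbf{x}}_{k}=(\mathbf{I}-\mathbf{K}_{k}\mathbf{U}^{y}_{k}\mathbf{H}_{k})\widetilde{\mathbf{x}}_{k|k-1}-\mathbf{K}_{k}\mathbf{v}_{k}$ (from \eqref{eqn: ekf update} and the unknown-matrix measurement residual), and running the same Lyapunov argument for $\widetilde{\mathbf{x}}_{k}^{T}\bm{\Sigma}_{k}^{-1}\widetilde{\mathbf{x}}_{k}$ — or transferring directly through this uniformly bounded update map — gives the analogous conclusion for $\widetilde{\mathbf{x}}_{k}$.

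The main obstacle, and the one place where the extra hypothesis \eqref{eqn:inequality on constants}, $\overline{\sigma}\,\overline{\gamma}\,\overline{h}^{2}\overline{\beta}^{2}<\hat{r}$, is genuinely needed, is controlling the measurement-update step: it must be ruled out that the posterior covariance collapses, i.e., one must exhibit a uniform lower bound $\underline{\sigma}'\mathbf{I}\preceq\bm{\Sigma}_{k}$ and keep the map $\mathbf{I}-\mathbf{K}_{k}\mathbf{U}^{y}_{k}\mathbf{H}_{k}$ (hence $\mathbf{A}_{k}$) uniformly invertible and well-conditioned. In the unknown-matrix representation the relevant gain factors as $\mathbf{K}_{k}=\bm{\Sigma}^{xy}_{k}\mathbf{S}_{k}^{-1}$ with $\bm{\Sigma}^{xy}_{k}=\bm{\Sigma}_{k|k-1}\mathbf{U}^{xy}_{k}\mathbf{H}_{k}^{T}\mathbf{U}^{y}_{k}$ (and the transposed form when $n<p$), so that $\|\mathbf{K}_{k}\mathbf{U}^{y}_{k}\mathbf{H}_{k}\|\leq\overline{\sigma}\,\overline{\gamma}\,\overline{h}^{2}\overline{\beta}^{2}/\hat{r}<1$; this strict inequality is precisely what makes the correction a contraction and closes the argument. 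A secondary, more routine subtlety is the conditioning: since $\mathbf{F}_{k}$, $\mathbf{H}_{k}$, $\mathbf{K}_{k}$ and the instrumental matrices are data-dependent, justifying the conditional-zero-mean step and the discarding of cross terms is exactly the purpose of absorbing all residuals into $\delta\mathbf{P}_{k+1|k}$, $\Delta\mathbf{P}_{k+1|k}$ (and their measurement-side analogues) and then imposing the bounds $\hat{q}\mathbf{I}\preceq\hat{\mathbf{Q}}_{k}$, $\hat{r}\mathbf{I}\preceq\hat{\mathbf{R}}_{k}$ as part of Assumption~1. Finally, I would note that setting $\mathbf{U}^{x}_{k}=\mathbf{I}$, $\mathbf{U}^{y}_{k}=\mathbf{I}$, $\mathbf{U}^{xy}_{k}=\mathbf{I}$ recovers the bounded-non-linearity conditions of \cite{reif1999stochastic}, as claimed in the surrounding text.
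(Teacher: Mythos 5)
Your proposal is correct and follows essentially the same strategy as the paper's proof: the Lyapunov function $V_{k}(\widetilde{\mathbf{x}}_{k|k-1})=\widetilde{\mathbf{x}}_{k|k-1}^{T}\bm{\Sigma}_{k|k-1}^{-1}\widetilde{\mathbf{x}}_{k|k-1}$ with $v_{\min}=1/\overline{\sigma}$, $v_{\max}=1/\underline{\sigma}$, a drift bound $\mathbf{A}_{k}^{T}\bm{\Sigma}_{k+1|k}^{-1}\mathbf{A}_{k}\preceq(1-\lambda)\bm{\Sigma}_{k|k-1}^{-1}$ obtained from the unknown-matrix Riccati recursion, the noise expectations absorbed into $\mu$, Lemma~\ref{lemma:exponential boundedness}, and finally the transfer $\mathbb{E}[\|\widetilde{\mathbf{x}}_{k}\|_{2}^{2}]\leq(1+\overline{k}\overline{\beta}\overline{h})^{2}\mathbb{E}[\|\widetilde{\mathbf{x}}_{k|k-1}\|_{2}^{2}]+\overline{k}^{2}\overline{r}p$ through the bounded update map (your second, ``direct transfer'' option is what the paper does; a second Lyapunov argument on $\bm{\Sigma}_{k}^{-1}$ would require bounds on $\bm{\Sigma}_{k}$ that are not among the hypotheses, so stick with the transfer). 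The one genuine difference is how the key Riccati step is closed and, correspondingly, where \eqref{eqn:inequality on constants} enters. You invoke the identity $\mathbf{M}_{k}^{T}(\mathbf{M}_{k}\mathbf{M}_{k}^{T}+\hat{\mathbf{Q}}_{k})^{-1}\mathbf{M}_{k}=\mathbf{I}-(\mathbf{I}+\mathbf{M}_{k}^{T}\hat{\mathbf{Q}}_{k}^{-1}\mathbf{M}_{k})^{-1}$, which holds for arbitrary $\mathbf{M}_{k}$ and so never needs $\mathbf{A}_{k}$ to be invertible; the paper instead factors $\bm{\Sigma}_{k+1|k}=\mathbf{A}_{k}\bigl(\bm{\Sigma}_{k|k-1}+\mathbf{A}_{k}^{-1}\hat{\mathbf{Q}}_{k}\mathbf{A}_{k}^{-T}\bigr)\mathbf{A}_{k}^{T}$, and it is precisely to invert $\mathbf{A}_{k}=\mathbf{U}^{x}_{k}\mathbf{F}_{k}(\mathbf{I}-\mathbf{K}_{k}\mathbf{U}^{y}_{k}\mathbf{H}_{k})$ that the paper uses \eqref{eqn:inequality on constants}: it gives $\|\mathbf{K}_{k}\mathbf{U}^{y}_{k}\mathbf{H}_{k}\|\leq\overline{\sigma}\overline{\gamma}\overline{h}^{2}\overline{\beta}^{2}/\hat{r}<1$, hence invertibility of $\mathbf{I}-\mathbf{K}_{k}\mathbf{U}^{y}_{k}\mathbf{H}_{k}$ (Assumption~2 supplies the rest). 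Your stated rationale for the inequality --- preventing collapse of the posterior covariance and making the correction a contraction --- is not how the paper uses it, although you do arrive at the same operative bound $\|\mathbf{K}_{k}\mathbf{U}^{y}_{k}\mathbf{H}_{k}\|<1$. What your route buys is a cleaner argument in which the strict inequality is arguably not needed for the drift bound itself; what the paper's route buys is that the same invertibility machinery is reused verbatim in the I-EKF stability proof (Theorem~\ref{theorem: inverse EKF stable unknown matrix}), where $(\mathbf{I}-\mathbf{K}_{k+1}\mathbf{H}_{k+1})^{-1}$ and the non-singularity of the composite Jacobians are needed explicitly.
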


\begin{proof}
See Appendix~\ref{App-thm-Forward ekf stable unknown matrix}.
\end{proof}

\subsubsection{Inverse EKF stability} 
%\textbf{Inverse EKF stability:}
For a stable forward EKF in the previous subsection, we prove the stochastic stability of the I-EKF as an extension of Theorem \ref{theorem:Forward ekf stable unknown matrix}. Similar to the forward EKF, we introduce unknown matrices $\overline{\mathbf{U}}^{x}_{k}$ and $\overline{\mathbf{U}}^{a}_{k}$ to account for the errors in the linearization of functions $\widetilde{f}_{k}(\cdot)$ and $g(\cdot)$, respectively, and $\overline{\mathbf{U}}^{xa}_{k}$ for the errors in cross-covariance matrix estimation. Similarly, denote $\hat{\overline{\mathbf{Q}}}_{k}$ and $\hat{\overline{\mathbf{R}}}_{k}$ as the counterparts of $\hat{\mathbf{Q}}_{k}$ and $\hat{\mathbf{R}}_{k}$, respectively, in the I-EKF dynamics. The following Theorem~\ref{theorem: inverse EKF stable unknown matrix} states the stability criteria for I-EKF. Note that, when compared  to Theorem~\ref{theorem:Forward ekf stable unknown matrix}, the following result requires an additional condition $\underline{r}\mathbf{I}\preceq\mathbf{R}_{k}$ for all $k\geq 0$ for some $\underline{r}>0$. % which is not necessary for forward EKF stability.
\begin{theorem}[Stochastic stability of I-EKF]
\label{theorem: inverse EKF stable unknown matrix}
Consider the adversary's forward EKF that is stable as per Theorem \ref{theorem:Forward ekf stable unknown matrix}. Additionally, assume that the following hold true for all $k\geq 0$.
\par\noindent\small
\begin{align*}
    \underline{r}\mathbf{I}&\preceq\mathbf{R}_{k},&\|\mathbf{G}_{k}\|&\leq\overline{g},&\|\overline{\mathbf{U}}^{a}_{k}\|&\leq\overline{c},&
    \|\overline{\mathbf{U}}^{xa}_{k}\|&\leq\overline{d},\\
    \overline{\mathbf{R}}_{k}&\preceq\overline{\epsilon}\mathbf{I},&\hat{c}\mathbf{I}&\preceq\hat{\overline{\mathbf{Q}}}_{k},&   \hat{d}\mathbf{I}&\preceq\hat{\overline{\mathbf{R}}}_{k},&
    \underline{p}\mathbf{I}&\preceq\overline{\bm{\Sigma}}_{k|k-1}\preceq\overline{p}\mathbf{I},
\end{align*}
\normalsize
for some real positive constants $\underline{r}, \overline{g}, \overline{c}, \overline{d}, \overline{\epsilon}, \hat{c}, \hat{d}, \underline{p}, \overline{p}$. Then, the state estimation error of I-EKF is exponentially bounded in mean-squared sense and bounded with probability one provided that the constants satisfy the inequality 
%\par\noindent\small
%\begin{align*}
  $\overline{p}\overline{d}\overline{g}^{2}\overline{c}^{2}<{\hat{d}}$. 
%\end{align*}
%\normalsize
\end{theorem}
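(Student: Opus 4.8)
The plan is to view the I-EKF of Section~\ref{subsec:ekf} as an ordinary two-step EKF applied to the ``reduced'' non-linear system whose state is the adversary's estimate $\hat{\mathbf{x}}_{k}$, whose state-transition map is $\widetilde{f}_{k}$ of \eqref{eqn: inverse ekf state transition}, and whose measurement is $\mathbf{a}_{k}=g(\hat{\mathbf{x}}_{k})+\bm{\epsilon}_{k}$ of \eqref{eqn: non a}; the stability argument is then a relabelled replay of Theorem~\ref{theorem:Forward ekf stable unknown matrix}. Write $\mathbf{e}_{k}\doteq\hat{\mathbf{x}}_{k}-\doublehat{\mathbf{x}}_{k}$ and $\mathbf{e}_{k+1|k}\doteq\hat{\mathbf{x}}_{k+1}-\doublehat{\mathbf{x}}_{k+1|k}$ for the I-EKF estimation and one-step prediction errors. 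First, I would Taylor-expand $\widetilde{f}_{k}$ about $\doublehat{\mathbf{x}}_{k}$ and $g$ about $\doublehat{\mathbf{x}}_{k|k-1}$ and lump all residuals (higher-order Taylor terms, the Jacobian-approximation error, and the mismatch between the forward gain $\mathbf{K}_{k+1}$ evaluated at $\hat{\mathbf{x}}_{k}$ versus at $\doublehat{\mathbf{x}}_{k}$) into the unknown instrumental matrices $\overline{\mathbf{U}}^{x}_{k}$, $\overline{\mathbf{U}}^{a}_{k}$, $\overline{\mathbf{U}}^{xa}_{k}$, exactly as is done for the forward EKF in Section~\ref{subsec:ekf stable unknown}. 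This yields the prediction-error recursion
\begin{align*}
\mathbf{e}_{k+1|k}=\overline{\mathbf{U}}^{x}_{k}\widetilde{\mathbf{F}}^{x}_{k}(\mathbf{I}-\overline{\mathbf{K}}_{k}\overline{\mathbf{U}}^{a}_{k}\mathbf{G}_{k})\mathbf{e}_{k|k-1}-\overline{\mathbf{U}}^{x}_{k}\widetilde{\mathbf{F}}^{x}_{k}\overline{\mathbf{K}}_{k}\bm{\epsilon}_{k}+\mathbf{K}_{k+1}\mathbf{v}_{k+1},
\end{align*}
with $\overline{\mathbf{K}}_{k}\doteq\overline{\bm{\Sigma}}_{k|k-1}\mathbf{G}_{k}^{T}\overline{\mathbf{S}}_{k}^{-1}$, which is term-by-term the analog of \eqref{eqn:forward EKF prediction error dynamics} under the substitution $(\mathbf{F}_{k},\mathbf{H}_{k},\mathbf{U}^{x}_{k},\mathbf{U}^{y}_{k},\mathbf{K}_{k},\mathbf{v}_{k},\mathbf{w}_{k},\bm{\Sigma}_{k|k-1})\mapsto(\widetilde{\mathbf{F}}^{x}_{k},\mathbf{G}_{k},\overline{\mathbf{U}}^{x}_{k},\overline{\mathbf{U}}^{a}_{k},\overline{\mathbf{K}}_{k},\bm{\epsilon}_{k},\mathbf{K}_{k+1}\mathbf{v}_{k+1},\overline{\bm{\Sigma}}_{k|k-1})$; the I-EKF covariance recursion is likewise rewritten in the matching Riccati form with the lumped covariances $\hat{\overline{\mathbf{Q}}}_{k}$ and $\hat{\overline{\mathbf{R}}}_{k}$, defined for the I-EKF just as $\hat{\mathbf{Q}}_{k}$ and $\hat{\mathbf{R}}_{k}$ were for the forward filter.

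Next, I would take the stochastic Lyapunov function $V_{k}(\mathbf{e}_{k|k-1})=\mathbf{e}_{k|k-1}^{T}\overline{\bm{\Sigma}}_{k|k-1}^{-1}\mathbf{e}_{k|k-1}$, so that the assumed bounds $\underline{p}\mathbf{I}\preceq\overline{\bm{\Sigma}}_{k|k-1}\preceq\overline{p}\mathbf{I}$ give the sandwich $\tfrac{1}{\overline{p}}\|\mathbf{e}_{k|k-1}\|_{2}^{2}\le V_{k}\le\tfrac{1}{\underline{p}}\|\mathbf{e}_{k|k-1}\|_{2}^{2}$ required by Lemma~\ref{lemma:exponential boundedness}. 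The core step is the drift estimate $\mathbb{E}[V_{k+1}\mid\mathcal{F}_{k}]-V_{k}\le\mu-\lambda V_{k}$ for some $\mu>0$ and $0<\lambda\le1$, obtained by the same matrix manipulations as in \cite{reif1999stochastic,xiong2006performance_ukf}: control $\overline{\bm{\Sigma}}_{k+1|k}^{-1}$ via the lower bound $\hat{c}\mathbf{I}\preceq\hat{\overline{\mathbf{Q}}}_{k}$ and a uniform upper bound on the propagated covariance, use the zero mean and mutual/temporal independence of $\bm{\epsilon}_{k}$ and $\mathbf{v}_{k+1}$ to discard the cross terms in $\mathbb{E}[\mathbf{e}_{k+1|k}\mathbf{e}_{k+1|k}^{T}\mid\mathcal{F}_{k}]$, and absorb the residual noise contribution into $\mu$. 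Here the hypotheses $\|\mathbf{G}_{k}\|\le\overline{g}$, $\|\overline{\mathbf{U}}^{a}_{k}\|\le\overline{c}$, $\|\overline{\mathbf{U}}^{xa}_{k}\|\le\overline{d}$, $\overline{\mathbf{R}}_{k}\preceq\overline{\epsilon}\mathbf{I}$, $\hat{d}\mathbf{I}\preceq\hat{\overline{\mathbf{R}}}_{k}$, together with the uniform bounds on $\widetilde{\mathbf{F}}^{x}_{k},\overline{\mathbf{U}}^{x}_{k},\overline{\mathbf{K}}_{k},\mathbf{K}_{k+1}$ discussed below, make the constants finite, and the inequality $\overline{p}\,\overline{d}\,\overline{g}^{2}\overline{c}^{2}<\hat{d}$ is precisely the analog of \eqref{eqn:inequality on constants} that forces the decay rate $\lambda$ to be strictly positive. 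Lemma~\ref{lemma:exponential boundedness} then delivers exponential mean-squared boundedness and boundedness with probability one of $\mathbf{e}_{k|k-1}$, and substituting this back into $\mathbf{e}_{k}=(\mathbf{I}-\overline{\mathbf{K}}_{k}\overline{\mathbf{U}}^{a}_{k}\mathbf{G}_{k})\mathbf{e}_{k|k-1}-\overline{\mathbf{K}}_{k}\bm{\epsilon}_{k}$, whose coefficient is uniformly bounded under the same hypotheses, transfers the conclusion to the estimation error $\mathbf{e}_{k}$.

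The step I expect to be the genuine obstacle, and the reason the forward analysis does not transfer verbatim, is that the reduced system's dynamics is \emph{built out of} the forward EKF's own recursion: the map $\widetilde{f}_{k}$ contains the forward gain $\mathbf{K}_{k+1}$, its ``process noise'' is $\mathbf{K}_{k+1}\mathbf{v}_{k+1}$ with covariance $\overline{\mathbf{Q}}_{k}=\mathbf{K}_{k+1}\mathbf{R}_{k+1}\mathbf{K}_{k+1}^{T}$, and the I-EKF evaluates $\mathbf{K}_{k+1}$ at its own estimate rather than at $\hat{\mathbf{x}}_{k}$. Establishing the uniform bounds invoked above, namely on $\|\widetilde{\mathbf{F}}^{x}_{k}\|$, on $\|\mathbf{K}_{k+1}\|$, on $\overline{\mathbf{Q}}_{k}$ (hence $\hat{c}\mathbf{I}\preceq\hat{\overline{\mathbf{Q}}}_{k}$), and on the forward-gain mismatch $\|\mathbf{K}_{k+1}(\hat{\mathbf{x}}_{k})-\mathbf{K}_{k+1}(\doublehat{\mathbf{x}}_{k})\|$, therefore has to invoke the already-established stability of the forward EKF (Theorem~\ref{theorem:Forward ekf stable unknown matrix}) together with its uniform bounds $\|\mathbf{F}_{k}\|\le\overline{f}$, $\|\mathbf{H}_{k}\|\le\overline{h}$, $\underline{\sigma}\mathbf{I}\preceq\bm{\Sigma}_{k|k-1}\preceq\overline{\sigma}\mathbf{I}$, and the rest. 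This is exactly where the extra hypothesis $\underline{r}\mathbf{I}\preceq\mathbf{R}_{k}$ of Theorem~\ref{theorem: inverse EKF stable unknown matrix} is consumed: it gives $\mathbf{S}_{k+1}=\mathbf{H}_{k+1}\bm{\Sigma}_{k+1|k}\mathbf{H}_{k+1}^{T}+\mathbf{R}_{k+1}\succeq\underline{r}\mathbf{I}$, which both bounds $\|\mathbf{K}_{k+1}\|$ from above and keeps $\overline{\mathbf{Q}}_{k}$, hence $\hat{\overline{\mathbf{Q}}}_{k}$, uniformly positive definite, playing the role that positive-definiteness of the forward measurement-noise covariance plays for the linear I-KF in \cite{krishnamurthy2019how}. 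A secondary nuisance is the measurability bookkeeping: $\widetilde{\mathbf{F}}^{x}_{k}$, $\overline{\mathbf{K}}_{k}$, $\mathbf{K}_{k+1}$ and the instrumental matrices are random and depend on the joint forward/inverse history, so the drift inequality must first be established conditionally on the natural filtration $\mathcal{F}_{k}$ (for which $V_{k}$ and all these quantities are measurable) and only then reduced to the form used in Lemma~\ref{lemma:exponential boundedness}.
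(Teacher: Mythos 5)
Your overall strategy is the one the paper uses: recast the I-EKF as a standard EKF on the reduced system with state $\hat{\mathbf{x}}_{k}$, dynamics $\widetilde{f}_{k}$ and observation \eqref{eqn: non a}, and then invoke the forward result (Theorem~\ref{theorem:Forward ekf stable unknown matrix}) after checking its hypotheses for the relabelled quantities. The conditions you list as "assumed or routine" (bounds on $\mathbf{G}_{k}$, $\overline{\mathbf{U}}^{a}_{k}$, $\overline{\mathbf{U}}^{xa}_{k}$, $\overline{\mathbf{R}}_{k}$, $\hat{\overline{\mathbf{Q}}}_{k}$, $\hat{\overline{\mathbf{R}}}_{k}$, $\overline{\bm{\Sigma}}_{k|k-1}$, and the inequality $\overline{p}\overline{d}\overline{g}^{2}\overline{c}^{2}<\hat{d}$) are indeed hypotheses of the theorem, and the bounds $\|\widetilde{\mathbf{F}}^{x}_{k}\|\leq\overline{f}(1+\overline{k}\overline{h})$ and $\overline{\mathbf{Q}}_{k}=\mathbf{K}_{k+1}\mathbf{R}_{k+1}\mathbf{K}_{k+1}^{T}\preceq\overline{k}^{2}\overline{r}\mathbf{I}$ follow from the forward filter's bounds exactly as you indicate.

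The gap is that Theorem~\ref{theorem:Forward ekf stable unknown matrix} also requires the \emph{state-linearization} instrumental matrix of the filter being analyzed --- here $\overline{\mathbf{U}}^{x}_{k}$ --- to be uniformly bounded \emph{and non-singular}, and neither property is a hypothesis of Theorem~\ref{theorem: inverse EKF stable unknown matrix} (only $\overline{\mathbf{U}}^{a}_{k}$ and $\overline{\mathbf{U}}^{xa}_{k}$ are assumed bounded). Your proposal introduces $\overline{\mathbf{U}}^{x}_{k}$ into the error recursion but never supplies these two facts, and they are the only non-routine content of the proof. The paper derives them by writing the I-EKF prediction error in two ways and matching coefficients to get $\overline{\mathbf{U}}^{x}_{k}(\mathbf{I}-\mathbf{K}_{k+1}\mathbf{H}_{k+1})\mathbf{F}_{k}=(\mathbf{I}-\mathbf{K}_{k+1}\mathbf{U}^{y}_{k+1}\mathbf{H}_{k+1})\mathbf{U}^{x}_{k}\mathbf{F}_{k}$, and then inverting $\mathbf{I}-\mathbf{K}_{k+1}\mathbf{H}_{k+1}$ via the matrix inversion lemma, $(\mathbf{I}-\mathbf{K}_{k+1}\mathbf{H}_{k+1})^{-1}=\mathbf{I}+\bm{\Sigma}_{k+1|k}\mathbf{H}_{k+1}^{T}\mathbf{R}_{k+1}^{-1}\mathbf{H}_{k+1}$, which is where the extra hypothesis $\underline{r}\mathbf{I}\preceq\mathbf{R}_{k}$ is actually consumed: it yields $\|(\mathbf{I}-\mathbf{K}_{k+1}\mathbf{H}_{k+1})^{-1}\|\leq 1+\overline{\sigma}\overline{h}^{2}/\underline{r}$ and hence $\|\overline{\mathbf{U}}^{x}_{k}\|\leq\overline{\alpha}(1+\overline{k}\overline{\beta}\overline{h})(1+\overline{\sigma}\overline{h}^{2}/\underline{r})$ with $\overline{\mathbf{U}}^{x}_{k}$ invertible. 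Your stated role for $\underline{r}\mathbf{I}\preceq\mathbf{R}_{k}$ is off on both counts: $\|\mathbf{K}_{k+1}\|\leq\overline{k}$ is already available from Theorem~\ref{theorem:Forward ekf stable unknown matrix} via $\hat{r}\mathbf{I}\preceq\hat{\mathbf{R}}_{k}$ without it, and the claim that it makes $\overline{\mathbf{Q}}_{k}$ (hence $\hat{\overline{\mathbf{Q}}}_{k}$) uniformly positive definite is false in general, since $\mathbf{K}_{k+1}\mathbf{K}_{k+1}^{T}$ need not be positive definite without a full-row-rank condition on $\mathbf{K}_{k+1}$; the theorem instead simply assumes $\hat{c}\mathbf{I}\preceq\hat{\overline{\mathbf{Q}}}_{k}$.
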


\begin{proof}
See Appendix~\ref{App-thm-inverse EKF stable unknown matrix}.
\end{proof}
\begin{remark}
Note that Theorem \ref{theorem:Forward ekf stable unknown matrix} requires both $\hat{\mathbf{Q}}_{k}$ and $\hat{\mathbf{R}}_{k}$ to be p.d. In general, the difference matrices $\Delta\mathbf{P}_{k+1|k}$, $\delta\mathbf{P}_{k+1|k}$, $\Delta\mathbf{P}^{yy}_{k+1}$ and $\delta\mathbf{P}^{yy}_{k+1}$ may not be p.d. One could enhance the stability of EKF by enlarging the noise covariance matrices by adding sufficiently large $\Delta\mathbf{Q}_{k}$ and $\Delta\mathbf{R}_{k}$ to $\mathbf{Q}_{k}$ and $\mathbf{R}_{k}$, respectively \cite{xiong2006performance_ukf,xiong2007authorreply}. The same argument also holds true for I-EKF noise covariance matrices.
\end{remark}

\subsection{I-EKF-without-unknown-input: Bounded non-linearity method}
\label{subsec:ekf stable Reif}
%\subsubsection{Reif's approach}\label{subsubsec:ekf stable Reif}
Consider the forward EKF's one step prediction formulation \eqref{eqn: one step forward ekf gain}-\eqref{eqn: one step forward ekf covariance}. Using Taylor series expansion around the estimate $\hat{\mathbf{x}}_{k}$, we have
\par\noindent\small
\begin{align*}
&f(\mathbf{x}_{k})-f(\hat{\mathbf{x}}_{k})=\mathbf{F}_{k}(\mathbf{x}_{k}-\hat{\mathbf{x}}_{k})+\phi(\mathbf{x}_{k},\hat{\mathbf{x}}_{k}),\\
&h(\mathbf{x}_{k})-h(\hat{\mathbf{x}}_{k})=\mathbf{H}_{k}(\mathbf{x}_{k}-\hat{\mathbf{x}}_{k})+\chi(\mathbf{x}_{k},\hat{\mathbf{x}}_{k}),
\end{align*}
\normalsize
where $\phi(\cdot)$ and $\chi(\cdot)$ are suitable non-linear functions to account for the higher-order terms of the expansions. Denoting the estimation error by $\mathbf{e}_{k}\doteq\mathbf{x}_{k}-\hat{\mathbf{x}}_{k}$, the error dynamics of the forward filter is
\par\noindent\small
\begin{align}
\mathbf{e}_{k+1}=(\mathbf{F}_{k}-\mathbf{K}_{k}\mathbf{H}_{k})\mathbf{e}_{k}+\mathbf{r}_{k}+\mathbf{s}_{k},\label{eqn: forward ekf error}
\end{align}
\normalsize
where $\mathbf{r}_{k}=\phi(\mathbf{x}_{k},\hat{\mathbf{x}}_{k})-\mathbf{K}_{k}\chi(\mathbf{x}_{k},\hat{\mathbf{x}}_{k})$ and $\mathbf{s}_{k}=\mathbf{w}_{k}-\mathbf{K}_{k}\mathbf{v}_{k}$.

The following Theorem~\ref{theorem: ekf stable Reif} (reproduced from \cite{reif1999stochastic}) provides sufficient conditions for forward EKF's stochastic stability.
\begin{theorem}[Exponential boundedness of forward EKF's error \cite{reif1999stochastic}]\label{theorem: ekf stable Reif} Consider a non-linear stochastic system defined by \eqref{eqn: ekf x} and \eqref{eqn: non y withoutdf}, and the one-step prediction formulation of forward EKF \eqref{eqn: one step forward ekf gain}-\eqref{eqn: one step forward ekf covariance}. Let the following assumptions hold true.
\begin{enumerate}
\item There exist positive real numbers $\overline{f}$,$\overline{h}$,$\underline{\sigma}$,$\overline{\sigma}$,$\underline{q}$,$\underline{r}$, $\delta$ such that the following bounds are fulfilled for all $k\geq 0$.
\par\noindent\small
\begin{align*}
\underline{\sigma}\mathbf{I}&\preceq\bm{\Sigma}_{k}\preceq\overline{\sigma}\mathbf{I},&\underline{q}\mathbf{I}&\preceq \mathbf{Q}_{k}\preceq\delta\mathbf{I},&\\
\underline{r}\mathbf{I}&\preceq \mathbf{R}_{k}\preceq\delta\mathbf{I},&\|\mathbf{F}_{k}\|&\leq\overline{f},\hspace{0.5cm}\|\mathbf{H}_{k}\|\leq\overline{h}.&
\end{align*}
\normalsize
\item $\mathbf{F}_{k}$ is non singular for every $k\geq 0$.
\item There exist positive real numbers $\kappa_{\phi}$, $\epsilon_{\phi}$, $\kappa_{\chi}$, $\epsilon_{\chi}$ such that the non-linear functions $\phi(\cdot)$ and $\chi(\cdot)$ satisfy
\par\noindent\small
\begin{align*}
&\|\phi(\mathbf{x},\hat{\mathbf{x}})\|_{2}\leq \kappa_{\phi}\|\mathbf{x}-\hat{\mathbf{x}}\|_{2}^{2}\hspace{0.2cm}\text{for}\hspace{0.2cm} \|\mathbf{x}-\hat{\mathbf{x}}\|_{2}\leq\epsilon_{\phi},\\
&\|\chi(\mathbf{x},\hat{\mathbf{x}})\|_{2}\leq \kappa_{\chi}\|\mathbf{x}-\hat{\mathbf{x}}\|_{2}^{2}\hspace{0.2cm}\text{for}\hspace{0.2cm} \|\mathbf{x}-\hat{\mathbf{x}}\|_{2}\leq\epsilon_{\chi}.
\end{align*}
\normalsize
\end{enumerate}
Then the estimation error given by \eqref{eqn: forward ekf error} is exponentially bounded in mean-squared sense and bounded with probability one provided that the estimation error is bounded by suitable constant $\epsilon>0$. 
\end{theorem}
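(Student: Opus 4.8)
The plan is to verify the hypotheses of Lemma~\ref{lemma:exponential boundedness} for the estimation error process $\{\mathbf{e}_{k}\}$ with the stochastic Lyapunov function $V_{k}(\mathbf{e}_{k})\doteq\mathbf{e}_{k}^{T}\bm{\Sigma}_{k}^{-1}\mathbf{e}_{k}$, where $\bm{\Sigma}_{k}$ is the one-step prediction covariance from \eqref{eqn: one step forward ekf covariance}. The sandwich condition is immediate: from $\underline{\sigma}\mathbf{I}\preceq\bm{\Sigma}_{k}\preceq\overline{\sigma}\mathbf{I}$ one gets $\tfrac{1}{\overline{\sigma}}\|\mathbf{e}_{k}\|_{2}^{2}\leq V_{k}(\mathbf{e}_{k})\leq\tfrac{1}{\underline{\sigma}}\|\mathbf{e}_{k}\|_{2}^{2}$, so $v_{\textrm{min}}=1/\overline{\sigma}$ and $v_{\textrm{max}}=1/\underline{\sigma}$. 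It then remains to establish the one-step drift inequality $\mathbb{E}[V_{k+1}(\mathbf{e}_{k+1})\mid\mathbf{e}_{k}]-V_{k}(\mathbf{e}_{k})\leq\mu-\lambda V_{k}(\mathbf{e}_{k})$ on the event that $\|\mathbf{e}_{k}\|_{2}$ stays within a suitable radius, and then invoke the Lemma.

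The first key step is the purely linear estimate: the closed-loop linearized transition matrix $\bm{\Phi}_{k}\doteq\mathbf{F}_{k}-\mathbf{K}_{k}\mathbf{H}_{k}$ is a \emph{uniform contraction} in the $\bm{\Sigma}_{k}^{-1}$-weighted norm, i.e., there exists $0<\alpha\leq 1$, depending only on $\overline{f},\overline{h},\underline{\sigma},\overline{\sigma},\underline{q},\underline{r},\delta$, such that $\bm{\Phi}_{k}^{T}\bm{\Sigma}_{k+1}^{-1}\bm{\Phi}_{k}\preceq(1-\alpha)\bm{\Sigma}_{k}^{-1}$ for all $k\geq 0$. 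This follows by inserting the gain \eqref{eqn: one step forward ekf gain} and the Riccati recursion \eqref{eqn: one step forward ekf covariance}, applying the matrix inversion lemma to $(\mathbf{H}_{k}\bm{\Sigma}_{k}\mathbf{H}_{k}^{T}+\mathbf{R}_{k})^{-1}$, and using the uniform bounds together with nonsingularity of $\mathbf{F}_{k}$ to bound below the ``extra'' term ($\mathbf{Q}_{k}$ plus the Kalman correction) relative to $\bm{\Phi}_{k}\bm{\Sigma}_{k}\bm{\Phi}_{k}^{T}$. This is the discrete-time analogue of showing that the nominal Kalman error system is exponentially stable, and it is the technical heart of the proof.

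Next I would handle the perturbations in $\mathbf{e}_{k+1}=\bm{\Phi}_{k}\mathbf{e}_{k}+\mathbf{r}_{k}+\mathbf{s}_{k}$ from \eqref{eqn: forward ekf error}. Gain boundedness $\|\mathbf{K}_{k}\|\leq\overline{k}\doteq\overline{f}\,\overline{\sigma}\,\overline{h}/\underline{r}$ follows from \eqref{eqn: one step forward ekf gain} and $\mathbf{R}_{k}\succeq\underline{r}\mathbf{I}$. Hence $\mathbf{s}_{k}=\mathbf{w}_{k}-\mathbf{K}_{k}\mathbf{v}_{k}$ is zero-mean, independent of $\mathbf{e}_{k}$, with covariance $\preceq(1+\overline{k}^{2})\delta\mathbf{I}$, so $\mathbb{E}[\mathbf{s}_{k}^{T}\bm{\Sigma}_{k+1}^{-1}\mathbf{s}_{k}]\leq\mu_{0}$ for a constant $\mu_{0}$ depending only on $n,\overline{k},\delta,\underline{\sigma}$, while the cross terms $\mathbb{E}[\mathbf{s}_{k}^{T}\bm{\Sigma}_{k+1}^{-1}(\bm{\Phi}_{k}\mathbf{e}_{k}+\mathbf{r}_{k})\mid\mathbf{e}_{k}]$ vanish. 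Assumption~3 together with $\|\mathbf{K}_{k}\|\leq\overline{k}$ gives $\|\mathbf{r}_{k}\|_{2}\leq\kappa\|\mathbf{e}_{k}\|_{2}^{2}$ with $\kappa\doteq\kappa_{\phi}+\overline{k}\,\kappa_{\chi}$, valid whenever $\|\mathbf{e}_{k}\|_{2}\leq\epsilon_{0}\doteq\min\{\epsilon_{\phi},\epsilon_{\chi}\}$. Expanding $V_{k+1}$ and taking the conditional expectation, the $\bm{\Phi}_{k}\mathbf{e}_{k}$ contribution is controlled by the contraction of the previous step, the $\mathbf{s}_{k}$ contribution yields the additive constant, and the terms involving $\mathbf{r}_{k}$ are $O(\|\mathbf{e}_{k}\|_{2}^{3})$ and $O(\|\mathbf{e}_{k}\|_{2}^{4})$; choosing $\epsilon\leq\epsilon_{0}$ small enough that on $\{\|\mathbf{e}_{k}\|_{2}\leq\epsilon\}$ these are dominated by $\tfrac{\alpha}{2\overline{\sigma}}\|\mathbf{e}_{k}\|_{2}^{2}$ gives $\mathbb{E}[V_{k+1}\mid\mathbf{e}_{k}]-V_{k}\leq\mu_{0}-\tfrac{\alpha}{2}V_{k}$. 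Setting $\mu=\mu_{0}$, $\lambda=\alpha/2$ and applying Lemma~\ref{lemma:exponential boundedness} yields exponential mean-squared boundedness and boundedness with probability one.

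I expect the main obstacle to be the uniform contraction estimate for $\bm{\Phi}_{k}$: turning the Riccati recursion and the stated operator-norm and definiteness bounds into an explicit $\alpha\in(0,1]$ requires careful manipulation via the matrix inversion lemma and the nonsingularity of $\mathbf{F}_{k}$, and all later constants ($\mu$, $\lambda$, and in particular the admissible error radius $\epsilon$) are expressed through it. A secondary subtlety, reflected in the phrasing ``provided that the estimation error is bounded by a suitable constant $\epsilon>0$,'' is that Assumption~3 holds only locally, so the drift inequality — and hence the conclusion — is conditional on the error remaining within radius $\epsilon$; making this precise is exactly the bookkeeping that propagates $\epsilon_{\phi},\epsilon_{\chi}$ through the estimates.
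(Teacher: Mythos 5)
Your proposal is correct and follows essentially the same route as the source: the paper does not reprove this theorem but reproduces it from \cite{reif1999stochastic}, whose argument is exactly your outline --- the Lyapunov function $V_{k}(\mathbf{e}_{k})=\mathbf{e}_{k}^{T}\bm{\Sigma}_{k}^{-1}\mathbf{e}_{k}$, the contraction estimate $(\mathbf{F}_{k}-\mathbf{K}_{k}\mathbf{H}_{k})^{T}\bm{\Sigma}_{k+1}^{-1}(\mathbf{F}_{k}-\mathbf{K}_{k}\mathbf{H}_{k})\preceq(1-\alpha)\bm{\Sigma}_{k}^{-1}$ via the Riccati recursion and matrix inversion lemma (their Lemma~3.1, including the gain bound $\|\mathbf{K}_{k}\|\leq\overline{f}\overline{\sigma}\overline{h}/\underline{r}$ that this paper also reuses), the local bounds on $\mathbf{r}_{k}$ and the additive noise constant from $\mathbf{s}_{k}$, and the application of Lemma~\ref{lemma:exponential boundedness} on the event that the error stays within radius $\epsilon$. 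No gaps beyond the deliberate deferral of the explicit constant $\alpha$, which is also where the technical work lies in the original.
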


\begin{remark}
Theorem~\ref{theorem: ekf stable Reif} guarantees that the estimation error remains exponentially bounded in mean-squared sense as long as the error is within suitable $\epsilon$ bounds. Further, the mean drift $\mathbb{E}[V_{k+1}(\mathbf{e}_{k+1})|\mathbf{e}_{k}]-V_{k}(\mathbf{e}_{k})$ for a suitably defined $V_{k}(\cdot)$ (for application of Lemma \ref{lemma:exponential boundedness}) is negative when $\widetilde{\epsilon}\leq\|\mathbf{e}_{k}\|_{2}\leq\epsilon$, which drives the system towards zero error in an expected sense. However, with some finite probability, the estimation error at some time-steps may be outside the $\epsilon$ bound. In this case, we cannot guarantee with probability one that the error will be within $\epsilon$ bound again at some future time-steps.
\end{remark}
As mentioned in Remark~\ref{remark:reif result remark}, bounded non-linearity approach may not provide theoretical guarantees for the filter to be stable for all time-steps but, practically, the filter remains stable even if the estimation error is outside the $\epsilon$ bound provided that the assumed bounds on the system model are satisfied.

For the inverse filter observations \eqref{eqn: non a}, the Taylor series expansion of $g(\cdot)$ at estimate $\doublehat{\mathbf{x}}_{k}$ of I-EKF's one step prediction formulation of Section~\ref{subsec:ekf}, considering suitable non-linear function $\overline{\chi}(\cdot)$ is
\par\noindent\small
\begin{align*}
&g(\hat{\mathbf{x}}_{k})-g(\doublehat{\mathbf{x}}_{k})=\mathbf{G}_{k}(\hat{\mathbf{x}}_{k}-\doublehat{\mathbf{x}}_{k})+\overline{\chi}(\hat{\mathbf{x}}_{k},\doublehat{\mathbf{x}}_{k}).
\end{align*}
\normalsize
Finally, the error dynamics of the inverse filter, with the estimation error denoted by $\overline{\mathbf{e}}_{k}\doteq\hat{\mathbf{x}}_{k}-\doublehat{\mathbf{x}}_{k}$ and the inverse filter's Kalman gain and estimation error covariance matrix by $\overline{\mathbf{K}}_{k}$ and $\overline{\bm{\Sigma}}_{k}$, respectively, is
\par\noindent\small
\begin{align}
\overline{\mathbf{e}}_{k+1}=(\widetilde{\mathbf{F}}^{x}_{k}-\overline{\mathbf{K}}_{k}\mathbf{G}_{k})\overline{\mathbf{e}}_{k}+\overline{\mathbf{r}}_{k}+\overline{\mathbf{s}}_{k},\label{eqn: inverse ekf error}
\end{align}
\normalsize
where $\overline{\mathbf{r}}_{k}=\overline{\phi}_{k}(\hat{x}_{k},\doublehat{\mathbf{x}}_{k})-\overline{\mathbf{K}}_{k}\overline{\chi}(\hat{\mathbf{x}}_{k},\doublehat{\mathbf{x}}_{k})$ and $\overline{\mathbf{s}}_{k}=\mathbf{K}_{k}\mathbf{v}_{k}-\overline{\mathbf{K}}_{k}\bm{\epsilon}_{k}$ with $\overline{\phi}_{k}(\hat{\mathbf{x}}_{k},\doublehat{\mathbf{x}}_{k})=\phi(\hat{\mathbf{x}}_{k},\doublehat{\mathbf{x}}_{k})-\mathbf{K}_{k}\chi(\hat{\mathbf{x}}_{k},\doublehat{\mathbf{x}}_{k})$.

%To extend Theorem~\ref{theorem: ekf stable Reif} to I-EKF's dynamics, 
The following Theorem~\ref{theorem: inverse ekf stable Reif} guarantees the stability of I-EKF. Note the additional assumption of $\mathbf{H}_{k}$ to be full column rank for all $k\geq 0$, which implies $p\geq n$. %This observation is formalised in Theorem \ref{theorem: inverse ekf stable Reif}. 
\begin{theorem}[Exponential boundedness of I-EKF's error]
\label{theorem: inverse ekf stable Reif} 
Consider the adversary's forward one-step prediction EKF that is stable as per Theorem \ref{theorem: ekf stable Reif}. Additionally, assume that the following hold true.
\begin{enumerate}
\item There exist positive real numbers $\overline{g}$,  %$\underline{m}$, 
\underline{$m$}, 
$\overline{m}$, $\underline{\epsilon}$, $\overline{\delta}$ such that the following bounds are fulfilled for all $k\geq 0$.
\par\noindent\small
\begin{align*}
&\|\mathbf{G}_{k}\|\le\overline{g},\;\;\underline{m}\mathbf{I}\preceq\overline{\bm{\Sigma}}_{k}\preceq\overline{m}\mathbf{I},\;\;\underline{\epsilon}\mathbf{I}\preceq \overline{\mathbf{R}}_{k}\preceq\overline{\delta}\mathbf{I}.
\end{align*}
\normalsize
\item $\mathbf{H}_{k}$ is full column rank for every $k\geq 0$.
\item There exist positive real numbers $\kappa_{\bar{\chi}}$ and $\epsilon_{\bar{\chi}}$ such that the non-linear function $\overline{\chi}(\cdot)$ satisfies
\par\noindent\small
\begin{align*}
\|\overline{\chi}(\hat{\mathbf{x}},\doublehat{\mathbf{x}})\|_{2}\leq \kappa_{\bar{\chi}}\|\hat{\mathbf{x}}-\doublehat{\mathbf{x}}\|_{2}^{2}\hspace{0.2cm}\text{for}\hspace{0.2cm} \|\hat{\mathbf{x}}-\doublehat{\mathbf{x}}\|_{2}\leq\epsilon_{\bar{\chi}}.
\end{align*}
\normalsize
\end{enumerate}
Then, the estimation error for I-EKF given by \eqref{eqn: inverse ekf error} is exponentially bounded in mean-squared sense and bounded with probability one provided that the estimation error is bounded by suitable constant $\overline{\epsilon}>0$.
\end{theorem}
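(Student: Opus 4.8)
The plan is to transplant the bounded-non-linearity argument of \cite{reif1999stochastic} --- the machinery behind Theorem~\ref{theorem: ekf stable Reif} --- to the inverse filter's one-step error recursion \eqref{eqn: inverse ekf error}, treating $\mathbf{a}_k$ as the observation, $\widetilde{f}_k(\cdot)$ as the state-transition map, $\overline{\mathbf{Q}}_k=\mathbf{K}_k\mathbf{R}_k\mathbf{K}_k^{T}$ as the effective process-noise covariance, and $\overline{\mathbf{R}}_k$ as the measurement-noise covariance; the inverse covariance $\overline{\bm{\Sigma}}_k$ then obeys the one-step Riccati recursion obtained from \eqref{eqn: one step forward ekf gain}--\eqref{eqn: one step forward ekf covariance} under $(\mathbf{F}_k,\mathbf{H}_k,\mathbf{Q}_k,\mathbf{R}_k)\mapsto(\widetilde{\mathbf{F}}^x_k,\mathbf{G}_k,\overline{\mathbf{Q}}_k,\overline{\mathbf{R}}_k)$, with gain $\overline{\mathbf{K}}_k=\widetilde{\mathbf{F}}^x_k\overline{\bm{\Sigma}}_k\mathbf{G}_k^{T}(\mathbf{G}_k\overline{\bm{\Sigma}}_k\mathbf{G}_k^{T}+\overline{\mathbf{R}}_k)^{-1}$. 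As a first step I would combine the bounds supplied by Theorem~\ref{theorem: ekf stable Reif} (on $\mathbf{F}_k,\mathbf{H}_k,\bm{\Sigma}_k,\mathbf{R}_k,\mathbf{K}_k$) with Assumption~1 of the present theorem to derive uniform upper bounds on $\|\mathbf{K}_k\|$, on $\|\widetilde{\mathbf{F}}^x_k\|$ (via $\widetilde{\mathbf{F}}^x_k=\mathbf{F}_k-\mathbf{K}_k\mathbf{H}_k=\mathbf{F}_k(\mathbf{I}+\bm{\Sigma}_k\mathbf{H}_k^{T}\mathbf{R}_k^{-1}\mathbf{H}_k)^{-1}$), and on $\|\overline{\mathbf{K}}_k\|$, and --- crucially --- a uniform positive-definite lower bound $\overline{\mathbf{Q}}_k=\mathbf{K}_k\mathbf{R}_k\mathbf{K}_k^{T}\succeq\underline{q}'\mathbf{I}$: since $\mathbf{F}_k$ is non-singular, $\bm{\Sigma}_k\succ\mathbf{0}$, and $\mathbf{H}_k$ has full column rank $n$, the gain $\mathbf{K}_k=\mathbf{F}_k\bm{\Sigma}_k\mathbf{H}_k^{T}(\mathbf{H}_k\bm{\Sigma}_k\mathbf{H}_k^{T}+\mathbf{R}_k)^{-1}$ has full row rank $n$ with least singular value uniformly bounded away from $0$, which together with $\mathbf{R}_k\succeq\underline{r}\mathbf{I}$ yields the bound. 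This is exactly where the extra hypothesis $p\ge n$ is used.

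Second, I would establish the inverse counterpart of the contraction step underlying Theorem~\ref{theorem: ekf stable Reif}: there exists $0<\overline{\alpha}<1$, uniform in $k$, with
\begin{align*}
(\widetilde{\mathbf{F}}^x_k-\overline{\mathbf{K}}_k\mathbf{G}_k)^{T}\,\overline{\bm{\Sigma}}_{k+1}^{-1}\,(\widetilde{\mathbf{F}}^x_k-\overline{\mathbf{K}}_k\mathbf{G}_k)\preceq(1-\overline{\alpha})\,\overline{\bm{\Sigma}}_k^{-1}.
\end{align*}
Here $\widetilde{\mathbf{F}}^x_k-\overline{\mathbf{K}}_k\mathbf{G}_k=\widetilde{\mathbf{F}}^x_k(\mathbf{I}+\overline{\bm{\Sigma}}_k\mathbf{G}_k^{T}\overline{\mathbf{R}}_k^{-1}\mathbf{G}_k)^{-1}$ is non-singular because $\widetilde{\mathbf{F}}^x_k$ is and $\overline{\mathbf{R}}_k\succ\mathbf{0}$; writing the Joseph form $\overline{\bm{\Sigma}}_{k+1}=(\widetilde{\mathbf{F}}^x_k-\overline{\mathbf{K}}_k\mathbf{G}_k)\overline{\bm{\Sigma}}_k(\widetilde{\mathbf{F}}^x_k-\overline{\mathbf{K}}_k\mathbf{G}_k)^{T}+\overline{\mathbf{K}}_k\overline{\mathbf{R}}_k\overline{\mathbf{K}}_k^{T}+\overline{\mathbf{Q}}_k\succeq(\widetilde{\mathbf{F}}^x_k-\overline{\mathbf{K}}_k\mathbf{G}_k)\overline{\bm{\Sigma}}_k(\widetilde{\mathbf{F}}^x_k-\overline{\mathbf{K}}_k\mathbf{G}_k)^{T}+\underline{q}'\mathbf{I}$, inverting, and invoking $\overline{\bm{\Sigma}}_k\preceq\overline{m}\mathbf{I}$ and $\|\widetilde{\mathbf{F}}^x_k-\overline{\mathbf{K}}_k\mathbf{G}_k\|\le c$ from the first step, gives the claim with $\overline{\alpha}=\underline{q}'/(\underline{q}'+\overline{m}\,c^{2})$, exactly as in \cite{reif1999stochastic}.

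Third, I would set $V_k(\overline{\mathbf{e}}_k)=\overline{\mathbf{e}}_k^{T}\overline{\bm{\Sigma}}_k^{-1}\overline{\mathbf{e}}_k$, so that $\overline{m}^{-1}\|\overline{\mathbf{e}}_k\|_2^2\le V_k(\overline{\mathbf{e}}_k)\le\underline{m}^{-1}\|\overline{\mathbf{e}}_k\|_2^2$ by Assumption~1, and substitute \eqref{eqn: inverse ekf error} into $\mathbb{E}[V_{k+1}(\overline{\mathbf{e}}_{k+1})\mid\overline{\mathbf{e}}_k]$. Since $\overline{\mathbf{s}}_k=\mathbf{K}_k\mathbf{v}_k-\overline{\mathbf{K}}_k\bm{\epsilon}_k$ is conditionally zero-mean given the past, all cross terms involving $\overline{\mathbf{s}}_k$ drop in conditional expectation, leaving: (i) the leading quadratic term, bounded by $(1-\overline{\alpha})V_k(\overline{\mathbf{e}}_k)$ via the contraction step; (ii) the residual terms involving $\overline{\mathbf{r}}_k=\overline{\phi}_k(\hat{\mathbf{x}}_k,\doublehat{\mathbf{x}}_k)-\overline{\mathbf{K}}_k\overline{\chi}(\hat{\mathbf{x}}_k,\doublehat{\mathbf{x}}_k)$, which --- using $\overline{\phi}_k=\phi-\mathbf{K}_k\chi$ and the quadratic growth bounds of Assumption~3 here and of Theorem~\ref{theorem: ekf stable Reif} --- obey $\|\overline{\mathbf{r}}_k\|_2\le\overline{\kappa}\|\overline{\mathbf{e}}_k\|_2^2$ for $\|\overline{\mathbf{e}}_k\|_2\le\min\{\epsilon_\phi,\epsilon_\chi,\epsilon_{\bar{\chi}}\}$ with $\overline{\kappa}$ uniformly bounded, hence contribute only $\mathcal{O}(\|\overline{\mathbf{e}}_k\|_2^3)+\mathcal{O}(\|\overline{\mathbf{e}}_k\|_2^4)$ on that ball; and (iii) a constant noise contribution $\mathbb{E}[\overline{\mathbf{s}}_k^{T}\overline{\bm{\Sigma}}_{k+1}^{-1}\overline{\mathbf{s}}_k]\le\mu$ from the uniform bounds on $\|\mathbf{K}_k\|,\mathbf{R}_k,\|\overline{\mathbf{K}}_k\|,\overline{\mathbf{R}}_k$. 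Choosing $\overline{\epsilon}>0$ small enough that the cubic and quartic terms are absorbed into $\tfrac{1}{2}\overline{\alpha}V_k(\overline{\mathbf{e}}_k)$ gives, on $\|\overline{\mathbf{e}}_k\|_2\le\overline{\epsilon}$, the mean-drift inequality $\mathbb{E}[V_{k+1}(\overline{\mathbf{e}}_{k+1})\mid\overline{\mathbf{e}}_k]-V_k(\overline{\mathbf{e}}_k)\le\mu-\tfrac{1}{2}\overline{\alpha}V_k(\overline{\mathbf{e}}_k)$, and Lemma~\ref{lemma:exponential boundedness} (with $\lambda=\tfrac{1}{2}\overline{\alpha}$) then delivers exponential mean-squared boundedness of $\overline{\mathbf{e}}_k$ and boundedness with probability one, under the same local $\overline{\epsilon}$-caveat as Theorem~\ref{theorem: ekf stable Reif}.

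I expect the main obstacle to be the combination of the first two steps --- securing the uniform positive-definite lower bound $\overline{\mathbf{Q}}_k\succeq\underline{q}'\mathbf{I}$ and hence the uniform contraction rate $\overline{\alpha}$ --- because $\overline{\mathbf{Q}}_k=\mathbf{K}_k\mathbf{R}_k\mathbf{K}_k^{T}$ is manufactured from the \emph{forward} EKF's recursively updated gain, so its rank and conditioning are governed by the joint action of the assumed bounds on $\bm{\Sigma}_k$, $\mathbf{F}_k$, $\mathbf{H}_k$ (full column rank, $p\ge n$) and $\mathbf{R}_k$; once this is in hand, the remaining Lyapunov-drift estimate is routine bookkeeping parallel to \cite{reif1999stochastic}.
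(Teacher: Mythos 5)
Your proposal is correct and follows essentially the same route as the paper: the paper's proof simply verifies that the inverse error dynamics satisfy the hypotheses of Theorem~\ref{theorem: ekf stable Reif} --- namely the uniform lower bound $\overline{\mathbf{Q}}_{k}=\mathbf{K}_{k}\mathbf{R}_{k}\mathbf{K}_{k}^{T}\succeq\underline{r}\widetilde{q}\mathbf{I}$ obtained from the full column rank of $\mathbf{H}_{k}$ and non-singularity of $\mathbf{F}_{k}$, the invertibility of $\widetilde{\mathbf{F}}^{x}_{k}=\mathbf{F}_{k}-\mathbf{K}_{k}\mathbf{H}_{k}$, and the quadratic growth of $\overline{\phi}_{k}=\phi-\mathbf{K}_{k}\chi$ via $\|\mathbf{K}_{k}\|\leq\overline{f}\overline{\sigma}\overline{h}/\underline{r}$ --- and then invokes that theorem, whereas you unfold the same contraction and Lyapunov-drift machinery explicitly. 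The three key verifications you identify are exactly the paper's conditions \textbf{C1}--\textbf{C3}, so the content is the same.
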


\begin{proof}
See Appendix~\ref{App-thm-inverse ekf stable Reif}.
\end{proof}

\begin{remark}\label{remark:observability}
The inequality $\underline{m}\mathbf{I}\preceq\overline{\bm{\Sigma}}_{k}\preceq\overline{m}\mathbf{I}$ assumed in Theorem~\ref{theorem: inverse ekf stable Reif} is closely related to the observability of the non-linear inverse filtering model. In particular, \cite{reif1999stochastic} showed that the condition $\underline{\sigma}\mathbf{I}\preceq\bm{\Sigma}_{k}\preceq\overline{\sigma}\mathbf{I}$ assumed for forward EKF's stability in Theorem~\ref{theorem: ekf stable Reif} is satisfied if the non-linear observability rank condition holds, i.e., the non-linear observability matrix $\mathbf{A}_{k}=\begin{bsmallmatrix}\nabla h(\mathbf{x}_{k})\\\nabla h(\mathbf{x}_{k+1})\nabla f(\mathbf{x}_{k})\\\vdots\\\nabla h(\mathbf{x}_{k+q-1})\nabla f(\mathbf{x}_{k+q-2})\hdots\nabla f(\mathbf{x}_{k})\end{bsmallmatrix}$ has full rank $q$ at $\mathbf{x}_{k}$. Note that the inequality $\underline{m}\mathbf{I}\preceq\overline{\bm{\Sigma}}_{k}\preceq\overline{m}\mathbf{I}$ is a weaker assumption than the observability. The same argument holds for the inequality $\underline{m}\mathbf{I}\preceq\overline{\bm{\Sigma}}_{k}\preceq\overline{m}\mathbf{I}$ for I-EKF's stability.
\end{remark}

\subsection{I-EKF-without-unknown-input: Consistency}
\label{subsec:iekf consistency}
Let us recall the following definition.
\begin{definition}[Consistency of estimator\cite{battistelli2014kullback}]
Consider an unbiased estimate $\hat{\mathbf{x}}$ of random variable $\mathbf{x}$ and its error covariance estimate $\bm{\Sigma}$. The pair ($\hat{\mathbf{x}}$,$\bm{\Sigma}$) are said to be consistent if $\mathbb{E}[(\mathbf{x}-\hat{\mathbf{x}})(\mathbf{x}-\hat{\mathbf{x}})^{T}]\preceq\bm{\Sigma}$, i.e., the estimated covariance $\bm{\Sigma}$ upper bounds the true error covariance. 
\end{definition}
To analyze the consistency of I-EKF's estimates, we consider the statistical linearization technique (SLT)\cite{arasaratnam2007qkf}. Linearize the state transition \eqref{eqn: inverse ekf state transition} and observation \eqref{eqn: non a}, respectively, at $[\hat{\mathbf{x}}_{k}^{T},\mathbf{v}_{k+1}^{T}]^{T}$ and $\hat{\mathbf{x}}_{k}$ as
\par\noindent\small
\begin{align}
    \hat{\mathbf{x}}_{k+1}&=\mathbf{U}^{xv}_{k}\overline{\mathbf{F}}^{x}_{k}\hat{\mathbf{x}}_{k}+\mathbf{U}^{xv}_{k}\overline{\mathbf{F}}^{v}_{k}\mathbf{v}_{k+1},\label{eqn:SLT state transition}\\
    \mathbf{a}_{k}&=\mathbf{U}^{a}_{k}\overline{\mathbf{G}}_{k}\hat{\mathbf{x}}_{k}+\bm{\epsilon}_{k},\label{eqn:SLT observation}
\end{align}
\normalsize
where $\overline{\mathbf{F}}_{k}=[\overline{\mathbf{F}}^{x}_{k},\overline{\mathbf{F}}^{v}_{k}]$ and $\overline{\mathbf{G}}_{k}$ are the respective linear pseudo transition matrices. Also, $\mathbf{U}^{xv}_{k}$ and $\mathbf{U}^{a}_{k}$ are unknown diagonal matrices introduced to account for the approximation errors in SLT. Note that these unknown matrices are different from the ones introduced in Section~\ref{subsec:ekf stable unknown} for the higher-order terms in the Taylor approximation.
\begin{theorem}[I-EKF's consistency]
    \label{thm:consistency}
    Consider an I-EKF initialized with a consistent initial estimate pair $(\doublehat{\mathbf{x}}_{0},\overline{\bm{\Sigma}}_{0})$. Then for any $k\geq 1$, the estimate $(\doublehat{\mathbf{x}}_{k},\overline{\bm{\Sigma}}_{k})$ computed recursively by the I-EKF are also consistent such that $\mathbb{E}[(\hat{\mathbf{x}}_{k}-\doublehat{\mathbf{x}}_{k})(\hat{\mathbf{x}}_{k}-\doublehat{\mathbf{x}}_{k})^{T}]\preceq\overline{\bm{\Sigma}}_{k}$, where $\hat{\mathbf{x}}_{k}$ is the forward EKF's state estimate.
\end{theorem}
\begin{proof}
    See Appendix~\ref{App-thm-inverse EKF consistency}.
\end{proof}

\vspace{-8pt}
\section{Numerical Experiments}\label{sec:simulations}
We illustrate the performance of the proposed inverse filters for different example systems. The efficacy of the inverse filters is demonstrated by comparing the estimation error with RCRLB. The CRLB provides a lower bound on mean-squared error (MSE) and is widely used to assess the performance of an estimator. For the discrete-time non-linear filtering, we employ the RCRLB as $\mathbb{E}\left[(\mathbf{x}_{k}-\hat{\mathbf{x}}_{k})(\mathbf{x}_{k}-\hat{\mathbf{x}}_{k})^{T}\right]\succeq\mathbf{J}_{k}^{-1}$ 
%\par\noindent\small
%\begin{align*}
 %   \mathbb{E}\left[(\mathbf{x}_{k}-\hat{\mathbf{x}}_{k})(\mathbf{x}_{k}-\hat{\mathbf{x}}_{k})^{T}\right]\succeq\mathbf{J}_{k}^{-1},
%\end{align*}
%\normalsize
where $\mathbf{J}_{k}=\mathbb{E}\left[-\frac{\partial^{2}\ln{p(Y^{k},X^{k})}}{\partial\mathbf{x}_{k}^{2}}\right]$ is the Fisher information matrix\cite{tichavsky1998posterior}.
%\par\noindent\small
%\begin{align*}
 %   \mathbf{J}_{k}=\mathbb{E}\left[-\frac{\partial^{2}\ln{p(Y^{k},X^{k})}}{\partial\mathbf{x}_{k}^{2}}\right].
%\end{align*}
%\normalsize
Here, $X^{k}=\lbrace\mathbf{x}_{0},\mathbf{x}_{1},\hdots,\mathbf{x}_{k}\rbrace$ is the state vector series while $Y^{k}=\lbrace\mathbf{y}_{1},\mathbf{y}_{2},\hdots,\mathbf{y}_{k}\rbrace$ are the noisy observations. Also, $p(Y^{k},X^{k})$ is the joint probability density of pair $(Y^{k},X^{k})$ and $\hat{\mathbf{x}}_{k}$ (a function of $Y^{k}$) is an estimate of $\mathbf{x}_{k}$ with $\frac{\partial^{2}(\cdot)}{\partial\mathbf{x}^{2}}$ denoting the Hessian with second order partial derivatives. The information matrix $\mathbf{J}_{k}$ can be computed recursively as \cite{tichavsky1998posterior}
\par\noindent\small
\begin{align}
    \mathbf{J}_{k}&=\mathbf{D}_{k}^{22}-\mathbf{D}_{k}^{21}(\mathbf{J}_{k-1}+\mathbf{D}_{k}^{11})^{-1}\mathbf{D}_{k}^{12},\label{eqn: general Jk recursions}\\
    \text{where}\hspace{0.25cm}\mathbf{D}_{k}^{11}&=\mathbb{E}\left[-\frac{\partial^{2}\ln{p(\mathbf{x}_{k}\vert\mathbf{x}_{k-1})}}{\partial\mathbf{x}_{k-1}^{2}}\right],\nonumber\\
    \mathbf{D}_{k}^{12}&=\mathbb{E}\left[-\frac{\partial^{2}\ln{p(\mathbf{x}_{k}\vert\mathbf{x}_{k-1})}}{\partial\mathbf{x}_{k}\partial\mathbf{x}_{k-1}}\right]=(\mathbf{D}_{k}^{21})^{T},\nonumber\\
    \mathbf{D}_{k}^{22}&=\mathbb{E}\left[-\frac{\partial^{2}\ln{p(\mathbf{x}_{k}\vert\mathbf{x}_{k-1})}}{\partial\mathbf{x}_{k}^{2}}\right]+\mathbb{E}\left[-\frac{\partial^{2}\ln{p(\mathbf{y}_{k}\vert\mathbf{x}_{k})}}{\partial\mathbf{x}_{k}^{2}}\right].\nonumber
\end{align}
\normalsize

For the non-linear system given by \eqref{eqn: ekf x} and \eqref{eqn: non y withoutdf}, the forward information matrices $\lbrace\mathbf{J}_{k}\rbrace$ recursions reduces to \cite{xiong2006performance_ukf}
\par\noindent\small
\begin{align}
    &\mathbf{J}_{k+1}=\mathbf{Q}_{k}^{-1}\nonumber\\
    &\;\;+\mathbf{H}_{k+1}^{T}\mathbf{R}_{k+1}^{-1}\mathbf{H}_{k+1}-\mathbf{Q}_{k}^{-1}\mathbf{F}_{k}(\mathbf{J}_{k}+\mathbf{F}_{k}^{T}\mathbf{Q}_{k}^{-1}\mathbf{F}_{k})^{-1}\mathbf{F}_{k}^{T}\mathbf{Q}_{k}^{-1},\label{eqn: additive Jk recursions}
\end{align}
\normalsize
where $\mathbf{F}_{k}=\nabla_{\mathbf{x}}f(\mathbf{x})\vert_{\mathbf{x}=\mathbf{x}_{k}}$ and $\mathbf{H}_{k}=\nabla_{\mathbf{x}}h(\mathbf{x})\vert_{\mathbf{x}=\mathbf{x}_{k}}$. Note that, for the information matrices recursion, the Jacobians $\mathbf{F}_{k}$ and $\mathbf{H}_{k}$ are evaluated at the true state $\mathbf{x}_{k}$ while for forward EKF recursions, these are evaluated at the estimates of the state. These recursions can be trivially extended to other system models considered in this paper and to compute the information matrix $\overline{\mathbf{J}}_{k}$ for inverse filter's estimate $\doublehat{\mathbf{x}}_{k}$. Some recent studies on cognitive radar target tracking instead consider posterior CRLB \cite{bell2015cognitive} as a metric to tune tracking filters.

Throughout all experiments, $100$ time-steps (indexed by $k$) were considered. The initial information matrices $\mathbf{J}_{0}$ and $\overline{\mathbf{J}}_{0}$ were set to $\bm{\Sigma}_{0}^{-1}$ and $\overline{\bm{\Sigma}}_{0}^{-1}$, respectively, % in all the numerical experiments considered here, 
unless mentioned otherwise. Note that these initial estimates only affect the RCRLB in the transient phase. The steady state RCRLB is independent of the initialization.
\subsection{Inverse KF with unknown inputs}\label{subsec:sim KF with unknown inputs}
%-------------------------------------------------------
%\begin{figure*}[t]
 %   \centering
  %  \includegraphics[width=\textwidth]{plot.png}
   % \caption{AMSE and RCRLB for forward and inverse filters: (a) KF-without-DF, with RMSE; (b) KF-with-DF; (c) EKF and SOEKF, averaged over 200 runs; and (d) GS-EKF (I-GS-EKF-2 and I-GS-EKF-5, respestively, for $\overline{l}=2$ and $5$) compared to EKF and I-EKF, averaged over 200 runs; (e) Time-averaged RMSE for forward and inverse EKF with and without DF; and (f) Absolute error and RCRLB for forward and inverse EKF as well as DEKF.}
    %\label{fig:plot}
%\end{figure*}
Consider a discrete-time linear system without DF\cite{hsieh2000robust}, %\textcolor{red}{what is the application? say something about it}
%\textcolor{magenta}{Sir, there is no application here. For KF, they simply take some arbitrary matrices for the linear system. This one is considered in the mentioned paper as well as some other papers for demonstrating KF with unknown inputs.}

\par\noindent\small
\begin{align*}
&\mathbf{x}_{k+1}=\begin{bmatrix}0.1 & 0.5 & 0.08\\ 0.6 & 0.01 & 0.04\\ 0.1 & 0.7 & 0.05\end{bmatrix}\mathbf{x}_{k}+\begin{bmatrix}0\\ 2\\ 1\end{bmatrix}u_{k}+\mathbf{w}_{k},\\
&\mathbf{y}_{k}=\begin{bmatrix}1 & 1 & 0\\ 0 & 1 & 1\end{bmatrix}\mathbf{x}_{k}+\mathbf{v}_{k},\;\;\;a_{k}=\begin{bmatrix}1 & 1 & 1\end{bmatrix}\hat{\mathbf{x}}_{k}+\epsilon_{k},
\end{align*}
\normalsize
with $\mathbf{w}_{k}\sim\mathcal{N}(\mathbf{0},\mathbf{I}_{3})$, $\mathbf{v}_{k}\sim\mathcal{N}(\mathbf{0},2\mathbf{I}_{2})$ and $\epsilon_{k}\sim\mathcal{N}(0,5)$. The %experiments iterated for 100 time-steps with 
unknown input $u_{k}$ was set to $50$ for $1\leq k \leq 50$ and $-50$ thereafter. 
%\par\noindent\small
%\begin{align*}
%    u_{k}=
%    \begin{cases}
%    50, & 1\leq k \leq 50\\
%    -50, & 51\leq k \leq 100
%    \end{cases}.
%\end{align*}
%\normalsize
The initial state was $\mathbf{x}_{0}=[1,1,1]^{T}$. For the forward filter, the initial state estimate was set to $[0,0,0]^{T}$ with initial covariance $\bm{\Sigma}_{0}=\mathbf{I}_{3}$. For the inverse filter, the initial state estimate was set to $\mathbf{x}_{0}$ (known to the defender) itself with initial covariance $\overline{\bm{\Sigma}}_{0}=5\mathbf{I}_{3}$. %The initial information matrices $\mathbf{J}_{0}$ and $\overline{\mathbf{J}}_{0}$ are chosen as $\bm{\Sigma}_{0}^{-1}$ and $\overline{\bm{\Sigma}}_{0}^{-1}$, respectively, in all the numerical experiments considered here, unless mentioned otherwise.

For KF-with-DF, we modify the forward filter's observations as\cite{pan2011study}:
\par\noindent\small
\begin{align*}
\mathbf{y}_{k}=\begin{bmatrix}1 & 1 & 0\\ 0 & 1 & 1\end{bmatrix}\mathbf{x}_{k}+\begin{bmatrix}0\\1\end{bmatrix}u_{k}+\mathbf{v}_{k}.
\end{align*}
\normalsize
%For the forward filter
Here, the initial input estimate was set to $10$ with initial input estimate covariance $\bm{\Sigma}^{u}_{0}=10$ and initial cross-covariance $\bm{\Sigma}^{xu}_{0}=[0,0,0]^{T}$. The inverse filter's initial augmented state estimate $\mathbf{z}_{0}$ was set to $[1,1,1,50]^{T}$ with initial covariance $\overline{\bm{\Sigma}}_{0}=5\mathbf{I}_{4}$. %All other system parameters and initial estimates are same.

Fig. \ref{fig:KF unknown input} shows the time-averaged RMSE (AMSE) $=\sqrt{(\sum_{i=1}^{k}\|\mathbf{x}_{i}-\hat{\mathbf{x}}_{i}\|_{2}^{2})/nk}$ at $k$-th time step for $n$-dimensional actual state $\mathbf{x}_{i}$ and its estimate $\hat{\mathbf{x}}_{i}$, and RCRLB for state estimation for both forward and inverse filters in the two cases, respectively, averaged over 200 runs. For KF-without-DF, we plot the root MSE (RMSE) $=\sqrt{(\|\mathbf{x}_{k}-\hat{\mathbf{x}}_{k}\|^{2}_{2})/n}$ for comparison here but omit it for later plots for clarity. %\textcolor{blue}{
Note that in Fig.~\ref{fig:KF unknown input}a, the I-KF-without-DF's RMSE fluctuates about the RCRLB because of a finite number of sample paths; see also similar phenomena in \cite{xiong2006performance_ukf,djuric2008target,vsimandl2001filtering}. The RCRLB value for state estimation is $\sqrt{\textrm{Tr}(\mathbf{J}^{-1})}$ with $\mathbf{J}$ denoting the associated information matrix.

Fig. \ref{fig:KF unknown input} shows that the effect of change in unknown input after 50 time-steps is negligible for KF-without-DF in both forward and inverse filters. However, for KF-with-DF, the sudden change in unknown input leads to an increase in state estimation error of the forward filter and, consequently, of the inverse filter. The estimation error of I-KF-without-DF is less than the corresponding forward filter while for KF-with-DF, the inverse filter has a higher estimation error than the forward filter. %Of all the forward and inverse filters, 
Only I-KF-without-DF efficiently achieves the RCRLB bound on the estimation error. Note that in this and the following numerical experiments, the forward and inverse filters are compared only to highlight the relative estimation accuracy.
%---------------------------------------------------------------------
\begin{figure}
  \centering
  \includegraphics[width = 1.0\columnwidth]{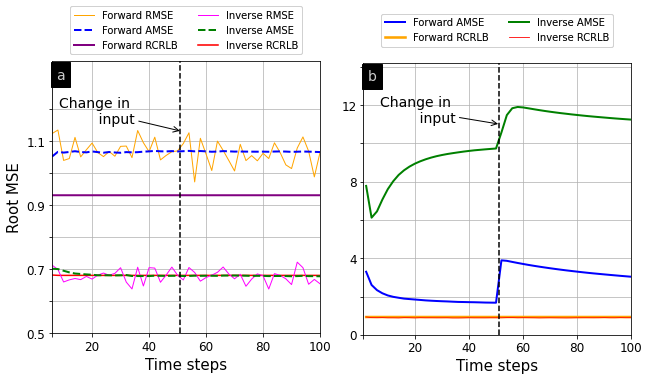}
  \caption{RMSE, AMSE and RCRLB for forward and inverse filters (a) KF-without-DF; (b) KF-with-DF.}
 \label{fig:KF unknown input}
\end{figure}
%-----------------------------------------------------

\subsection{Inverse EKF without unknown inputs}\label{subsec:sim EKF}
Consider the discrete-time non-linear system model of FM demodulator without unknown inputs \cite[Sec. 8.2]{anderson2012optimal}
\par\noindent\small
\begin{align*}
&\mathbf{x}_{k+1}\doteq\begin{bmatrix}\lambda_{k+1}\\\theta_{k+1}\end{bmatrix}=\begin{bmatrix}\exp{(-T/\beta)}&0\\-\beta \exp{(-T/\beta)}-1&1\end{bmatrix}\begin{bmatrix}\lambda_{k}\\\theta_{k}\end{bmatrix}+\begin{bmatrix}1\\-\beta\end{bmatrix}w_{k},\\
&\mathbf{y}_{k}=\sqrt{2}\begin{bmatrix}\sin{\theta_{k}}\\\cos{\theta_{k}}\end{bmatrix}+\mathbf{v}_{k},\;\;
a_{k}=\hat{\lambda}_{k}^{2}+\epsilon_{k},
\end{align*}
\normalsize
with $w_{k}\sim\mathcal{N}(0,0.01)$, $\mathbf{v}_{k}\sim\mathcal{N}(\mathbf{0},\mathbf{I}_{2})$, $\epsilon_{k}\sim\mathcal{N}(0,5)$, $T=2\pi/16$ and $\beta=100$. Here, the observation function $g(\cdot)$ for the inverse filter is quadratic. Also, $\hat{\lambda}_{k}$ is the forward EKF's estimate of $\lambda_{k}$.

The initial state $\mathbf{x}_{0}\doteq[\lambda_{0},\theta_{0}]^{T}$ was set randomly with $\lambda_{0}\sim\mathcal{N}(0,1)$ and $\theta_{0}\sim\mathcal{U}[-\pi,\pi]$. The initial state estimates of forward and inverse EKF were also similarly drawn at random. The initial covariances were set to $\bm{\Sigma}_{0}=10\mathbf{I}_{2}$ and $\overline{\bm{\Sigma}}_{0}=5\mathbf{I}_{2}$ for forward and inverse EKF, respectively. The phase term of the state $\theta$ and its estimates $\hat{\theta}$ and $\doublehat{\theta}$ (for both prediction and measurement updates) were considered to be modulo $2\pi$ \cite{anderson2012optimal}. % while implementing these filters. 
Note that the process covariance $\mathbf{Q}$ is a singular matrix. For numerical stability and to facilitate computation of $\mathbf{Q}^{-1}$ for evaluating information matrices $\mathbf{J}_{k}$, we used an enlarged covariance matrix by adding $10^{-10}\mathbf{I}_{2}$ to $\mathbf{Q}$ in the forward filters. Similarly, we added $10^{-10}\mathbf{I}_{2}$ to $\overline{\mathbf{Q}}_{k}$ in the inverse filter because $\overline{\mathbf{Q}}_{k}$ is time-varying and may be ill-conditioned. The initial $\overline{\mathbf{J}}_{0}$ was taken close to the inverse of the steady state estimation covariance matrix of the forward filter. The initial $\overline{\mathbf{J}}_{0}$ only affects the RCRLB calculated for initial few time-steps. The RCRLB after these initial time-steps (around 20 for the considered system) shows same behaviour irrespective of the initial $\overline{\mathbf{J}}_{0}$.

Fig. \ref{fig:EKF and EKF with unknown inputs}a shows the AMSE and RCRLB for forward and inverse EKF averaged over 200 runs. The I-EKF's estimation error is comparable to that of forward EKF with I-EKF's average error being slightly higher than that of forward EKF. However, the difference between AMSE and RCRLB for I-EKF is less than that for forward EKF. Hence, we conclude that I-EKF is more efficient here. The I-EKF assumes initial covariance $\bm{\Sigma}_{0}$ as $5\mathbf{I}_{2}$ (the true $\bm{\Sigma}_{0}$ of forward EKF is $10\mathbf{I}_{2}$) and a random initial state for these recursions. In spite of this difference in the initial estimates, I-EKF's error performance is comparable to that of the forward EKF.

\subsection{Inverse EKF with unknown inputs}\label{subsec:sim EKF with unknown inputs}
%--------------------------------------
\begin{figure}
  \centering
  \includegraphics[width = 1.0\columnwidth]{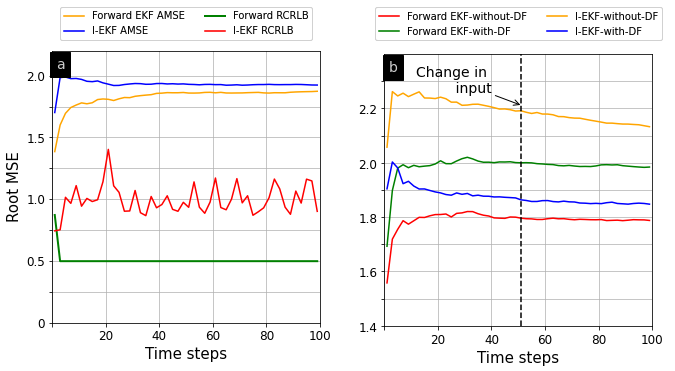}
  \caption{(a) AMSE and RCRLB for forward and inverse EKF; (b) Time-averaged RMSE for forward and inverse EKF with and without DF, averaged over 200 runs.}
 \label{fig:EKF and EKF with unknown inputs}
\end{figure}
%--------------------------------------
For inverse EKF with unknown input, we modified the non-linear system model of Section \ref{subsec:sim EKF} to include an unknown input $u_{k}$ as %For system without DF, the state transition equation is
\par\noindent\small
\begin{align*}
\mathbf{x}_{k+1}=\begin{bmatrix}\exp{(-T/\beta)}&0\\-\beta \exp{(-T/\beta)}-1&1\end{bmatrix}\begin{bmatrix}\lambda_{k}\\\theta_{k}\end{bmatrix}+\begin{bmatrix}0.001\\1\end{bmatrix}u_{k}+\begin{bmatrix}1\\-\beta\end{bmatrix}w_{k},
\end{align*}
\normalsize
where $u_{k}$ was set to $\pi/4$ for $1\leq k \leq 50$ and $-\pi/4$ thereafter. 
%\par\noindent\small
%\begin{align*}
%    u_{k}=
%    \begin{cases}
%    \pi/4, & 1\leq k \leq 50\\
%    -\pi/4, & 51\leq k \leq 100
%    \end{cases}. 
%\end{align*}
%\normalsize
The observation $\mathbf{y}_{k}$ of the forward EKF-without-DF was same as in Section \ref{subsec:sim EKF}. %For simplicity, we 
Consider a linear measurement $a_{k}$ for the inverse filter as 
%\par\noindent\small
%\begin{align*}
    $a_{k}=\hat{\lambda}_{k}+\epsilon_{k}$.
%\end{align*}
%\normalsize
For the forward filter, the initial input estimate was set to $0$ while the inverse filter initial augmented state estimate consisted of the true state $\mathbf{x}_{0}$ and true input $u_{0}$ (known to the defender) with initial covariance estimate $\overline{\bm{\Sigma}}_{0}=15\mathbf{I}_{3}$. %All other system parameters and initial estimates are considered to be same as in Section \ref{subsec:sim EKF}. 

Similarly, for system with DF, we again considered the same non-linear system (without any unknown input in $\mathbf{x}_{k}$ state transition) but with a modified forward filter's observation 
%\par\noindent\small
%\begin{align*}
$\mathbf{y}_{k}=\sqrt{2}\begin{bmatrix}\sin{(\theta_{k}+u_{k})}\\\cos{(\theta_{k}+u_{k})}\end{bmatrix}+\mathbf{v}_{k}$. 
%\end{align*}
%\normalsize
%In the implementation of these filters, 
The input estimates $\hat{u}$ and $\doublehat{u}$ were also, as before, modulo $2\pi$. The Gaussian noise terms in the inverse filter state transitions (\eqref{eqn: state transition ekf without df state} and \eqref{eqn:state transition ekf with df}) are transformed through non-linear functions such that \eqref{eqn: additive Jk recursions} is not applicable. The RCRLB in this case is derived using the general $\mathbf{J}_{k}$ recursions given by \eqref{eqn: general Jk recursions}, which is omitted here. %Due to paucity of space, we do not consider RCRLB in these experiments. 
%The AMSE for the forward and inverse EKF with and without DF are shown in Fig. \ref{fig:EKF unknown and dither}(a). It can be observed from 
Fig. \ref{fig:EKF and EKF with unknown inputs}b shows that for both EKF with and without DF, the change in unknown input after $50$ time-steps does not increase the estimation error (as for KF-with-DF in Fig. \ref{fig:KF unknown input}b). %Hence, we do not observe any sudden increase in the estimation error of the inverse filters as well. 
The estimation error of I-EKF-without-DF (I-EKF-with-DF) is higher (lower) than that of the corresponding forward filter. %From these experiments on systems with unknown inputs, it seems that 
Any change in unknown input affects the inverse filter's performance only when a significant change occurs in the forward filter's performance.

\vspace{-8pt}
\section{Summary}\label{sec:summary}
We studied the inverse filtering problem for non-linear systems with and without unknown inputs in the context of counter-adversarial applications. %These inverse filters allow the defender to infer an estimate of the adversary's estimate, given the defender's noisy observations of adversary's actions. %We considered different forward filters employed by the adversary leading to different inverse filters. 
For systems with unknown inputs, the adversary's observations may or may not be affected by the unknown input known to the defender but not the adversary. %In order to address these two cases, we developed I-EKF and I-KF (each with and without DF) for non-linear and linear system dynamics, respectively. We have also developed I-EKF for non-linear systems without unknown inputs.
%We provided sufficient conditions for the stability of I-KF-without-DF and I-EKF. In particular, 
The stochastic stability of a forward filter with certain additional system assumptions is also sufficient for the stability of the inverse filter. Such a stability analysis of inverse filter has not been considered in the prior work \cite{krishnamurthy2019how}. While \cite{bell2015cognitive,sharaga2015optimal} consider adapting a cognitive radar based on its observations, the proposed inverse filters allow a counter-adversarial defender to infer such a cognitive radar's information by observing its adaptations. %For I-EKF, we considered two different approaches to study its stability, each with its own advantages. The asymptotic stability of I-KF-without-DF has been obtained by extending standard KF stability results. 
%We have demonstrated the efficacy of the different inverse filters through numerical examples using RCRLB as a performance measure. %For the non-linear system without unknown inputs, we have considered the FM demodulation application. The FM demodulator model has also been extended for systems with unknown inputs.
Our experiments suggested that the impact of the unknown input on inverse filter's performance strongly depends on its impact on the forward filter. For certain systems, the inverse filter may perform more efficiently than the forward filter. In the companion paper (Part II) \cite{singh2022inverse_part2}, we develop I-EKF for second-order, Gaussian sum, and dithered EKFs and consider the case of uncertain information about the forward filter. % we extend the theory of I-EKF to its variants. These variants may provide improved estimation performance or stability depending on the system. Furthermore, the inverse filters developed in this part of the paper assume perfect system model information on both adversary and defender's side. In order to address this limitation, we develop RKHS-based forward and inverse EKF in the companion paper (Part II) \cite{singh2022inverse_part2}.

%\vspace{-8pt}
\appendices

\section{Forward EKF-without-DF recursions}
\label{App-forward-EKF-without-DF-recursions}
Here, we provide the detailed steps to derive the forward EKF-without-DF recursions, which were omitted in \cite{pan2010applying}. The forward EKF-without-DF is formulated based on a weighted least-squared error criterion. To this end, similar to EKF, the system model is first linearized locally at the estimates of the previous state and unknown inputs. The linearized models are then used to define a quadratic objective function of an extended state vector consisting of the current state and the unknown inputs at all time instants. Finally, recursive estimates are derived for the extended state vector and then simplified to yield forward EKF-without-DF recursions. As mentioned in Remark~\ref{remark:with and without diff} of the paper, systems without DF induce a one-step delay in input estimation.

%In Section~\ref{sec:model}, we formulate the least-square estimation problem in terms of the extended state vector. The recursions for estimating the extended state vector are then derived in Section~\ref{sec:extended}, from which the forward EKF-without-DF recursions follow in Section~\ref{sec:recursions}. Note that the forward EKF-without-DF has been proposed in \cite{pan2010applying}; here, we only provide the omitted details. Hence, in Section~\ref{sec:model} and \ref{sec:extended}, we briefly restate the problem formulation and the extended state recursions steps. We refer the reader to \cite[Section~3]{pan2010applying} for further details.

%\section{Problem formulation}\label{sec:model}
Consider the non-linear state transition \eqref{eqn: non x with input} and observation \eqref{eqn: non y withoutdf} without DF. We require estimates $\hat{\mathbf{x}}_{k|k}$ and $\hat{\mathbf{u}}_{k-1|k}$ (represented by $\hat{\mathbf{x}}_{k}$ and $\hat{\mathbf{u}}_{k-1}$ in the main paper) of the state $\mathbf{x}_{k}$ and unknown input $\mathbf{u}_{k-1}$, respectively, given the observations $\{\mathbf{y}_{i}\}_{1\leq i\leq k}$. Linearize the non-linear functions $f(\cdot,\cdot)$ in \eqref{eqn: non x with input} and $h(\cdot)$ in \eqref{eqn: non y withoutdf} with respect to the previous estimates as follows:
\par\noindent\small
\begin{align}
f(\mathbf{x}_{k},\mathbf{u}_{k})&=f(\hat{\mathbf{x}}_{k|k},\hat{\mathbf{u}}_{k-1|k})+\mathbf{F}_{k}(\mathbf{x}_{k}-\hat{\mathbf{x}}_{k|k})+\mathbf{B}_{k}(\mathbf{u}_{k}-\hat{\mathbf{u}}_{k-1|k}),\nonumber\\
    h(\mathbf{x}_{k+1})&=h(\hat{\mathbf{x}}_{k+1|k})+\mathbf{H}_{k+1}(\mathbf{x}_{k+1}-\hat{\mathbf{x}}_{k+1|k}),\label{eqn:supp 14}
\end{align}
\normalsize
Then, \eqref{eqn: non x with input} becomes
\par\noindent\small
\begin{align}
\mathbf{x}_{k+1}=\mathbf{F}_{k}\mathbf{x}_{k}+\mathbf{B}_{k}\mathbf{u}_{k}+\overline{\mathbf{u}}_{k}+\mathbf{w}_{k},\label{eqn:supp 20}
\end{align}
\normalsize
where $\overline{\mathbf{u}}_{k}=f(\hat{\mathbf{x}}_{k|k},\hat{\mathbf{u}}_{k-1|k})-\mathbf{F}_{k}\hat{\mathbf{x}}_{k|k}-\mathbf{B}_{k}\hat{\mathbf{u}}_{k-1|k}$.

Define the least-squared error objective function $J_{k+1}=\overline{\bm{\Delta}}_{k+1}^{T}\mathbf{W}_{k+1}\overline{\bm{\Delta}}_{k+1}$ where $\overline{\bm{\Delta}}_{k+1}=[\bm{\Delta}_{1}^{T},\bm{\Delta}_{2}^{T},\hdots,\bm{\Delta}_{k+1}^{T}]\in\mathbb{R}^{p(k+1)\times 1}$ with $\bm{\Delta}_{i}=\mathbf{y}_{i}-h(\mathbf{x}_{i})$. The weighting matrix $\mathbf{W}_{k+1}\in\mathbb{R}^{p(k+1)\times p(k+1)}$ is defined using the inverse of process and measurement noise covariance matrices as in \cite[Eq.~(33)]{pan2010applying}.

Define a extended state vector $\mathbf{z}_{k}=[\mathbf{x}_{k}^{T},\mathbf{u}_{1}^{T},\mathbf{u}_{2}^{T},\hdots,\mathbf{u}_{k-1}^{T}]^{T}$. We first represent the objective function $J_{k+1}$ in terms of $\mathbf{z}_{k+1}$. Rearranging \eqref{eqn:supp 20}, we obtain $\mathbf{x}_{k}=\mathbf{F}_{k}^{-1}\mathbf{x}_{k+1}-\mathbf{F}_{k}^{-1}(\mathbf{B}_{k}\mathbf{u}_{k}+\overline{\mathbf{u}}_{k}+\mathbf{w}_{k})$. Replacing $k$ by $k-1$, we have $\mathbf{x}_{k-1}=\mathbf{F}_{k-1}^{-1}\mathbf{x}_{k}-\mathbf{F}_{k-1}^{-1}(\mathbf{B}_{k-1}\mathbf{u}_{k-1}+\overline{\mathbf{u}}_{k-1}+\mathbf{w}_{k-1})$ such that $\mathbf{x}_{k-1}=\mathbf{F}_{k-1}^{-1}\mathbf{F}_{k}^{-1}\mathbf{x}_{k+1}-\mathbf{F}_{k-1}^{-1}\mathbf{F}_{k}^{-1}(\mathbf{B}_{k}\mathbf{u}_{k}+\overline{\mathbf{u}}_{k}+\mathbf{w}_{k})-\mathbf{F}_{k-1}^{-1}(\mathbf{B}_{k-1}\mathbf{u}_{k-1}+\overline{\mathbf{u}}_{k-1}+\mathbf{w}_{k-1})$. Repeating the procedure for $i=k-2,k-3,\hdots,1$, we obtain
\par\noindent\small
\begin{align}
    \mathbf{x}_{i}=\bm{\Phi}_{k+1,i}^{-1}\mathbf{x}_{k+1}-\left(\sum_{j=i}^{k}\bm{\Phi}_{j+1,i}^{-1}(\mathbf{B}_{j}\mathbf{u}_{j}+\overline{\mathbf{u}}_{j}+\mathbf{w}_{j})\right),\label{eqn:supp 23}
\end{align}
\normalsize
for $i=1,2,\hdots,k$ with $\bm{\Phi}_{q,s}^{-1}\doteq\mathbf{F}_{s}^{-1}\mathbf{F}_{s+1}^{-1}\hdots\mathbf{F}_{q-1}^{-1}$ for $q>s$ and $\bm{\Phi}_{s,s}^{-1}=\mathbf{I}$. Using \eqref{eqn: non y withoutdf}, \eqref{eqn:supp 14} and \eqref{eqn:supp 23}, we have
\par\noindent\small
\begin{align}
    \bm{\Delta}_{i}=\mathbf{y}_{i}-\mathbf{H}_{i}\bm{\Phi}_{k+1,i}^{-1}\mathbf{x}_{k+1}+\mathbf{H}_{i}\left(\sum_{j=i}^{k}(\bm{\Phi}_{j+1,i}^{-1}\mathbf{B}_{j}\mathbf{u}_{j})\right)-\widetilde{\mathbf{u}}_{i|i-1},\label{eqn:supp 28}
\end{align}
\normalsize
for $i=1,2,\hdots,k+1$ with
\par\noindent\small
\begin{align}
    \widetilde{\mathbf{u}}_{i|i-1}=h(\hat{\mathbf{x}}_{i|i-1})-\mathbf{H}_{i}\left(\sum_{j=i}^{k}\bm{\Phi}_{j+1,i}^{-1}\overline{\mathbf{u}}_{j}+\hat{\mathbf{x}}_{i|i-1}\right).\label{eqn: supp 26}
\end{align}
\normalsize
Using \eqref{eqn:supp 28}, we express $\overline{\bm{\Delta}}_{k+1}$ as
\par\noindent\small
\begin{align}
    \overline{\bm{\Delta}}_{k+1}=\mathbf{Y}_{k+1}-\mathbf{A}_{z,k+1}\mathbf{z}_{k+1},\nonumber
\end{align}
\normalsize
where $\mathbf{Y}_{k+1}=[(\mathbf{y}_{1}-\widetilde{\mathbf{u}}_{1|0})^{T},(\mathbf{y}_{2}-\widetilde{\mathbf{u}}_{2|1})^{T},\hdots,(\mathbf{y}_{k+1}-\widetilde{\mathbf{u}}_{k+1|k})^{T}]$ and $\mathbf{A}_{z,k+1}=\begin{bmatrix}\widetilde{\mathbf{L}}_{k+1}&\widetilde{\mathbf{N}}_{k+1}\\\widetilde{\mathbf{H}}_{k+1}&\mathbf{0}_{p\times m}\end{bmatrix}$. Here,
\par\noindent\small
\begin{align}
    \widetilde{\mathbf{H}}_{k+1}=[\mathbf{H}_{k+1},\mathbf{0}_{p\times m(k-1)}],\label{eqn: supp 31}
\end{align}
\normalsize
while $\widetilde{\mathbf{N}}_{k+1}$ and $\widetilde{\mathbf{L}}_{k+1}$ are given by \cite[Eq.~32]{pan2010applying}.

Assume $p\geq m$ (condition for the existence of forward EKF-without-DF) and minimize the objective function $J_{k+1}$ with respect to the extended state vector $\mathbf{z}_{k+1}$ to yield the estimate $\hat{\mathbf{z}}_{k+1|k+1}=[\hat{\mathbf{x}}_{k+1|k+1}^{T},\hat{\mathbf{u}}_{1|k+1}^{T},\hat{\mathbf{u}}_{2|k+1}^{T},\hdots,\hat{\mathbf{u}}_{k|k+1}^{T}]$ given observations $\{\mathbf{y}_{i}\}_{1\leq i\leq k+1}$ as
\par\noindent\small
\begin{align}
    \hat{\mathbf{z}}_{k+1|k+1}=\mathbf{P}_{z,k+1}(\mathbf{A}_{z,k+1}^{T}\mathbf{W}_{k+1}\mathbf{Y}_{k+1}),\nonumber
\end{align}
where
\begin{align}
    \mathbf{P}_{z,k+1}=(\mathbf{A}_{z,k+1}^{T}\mathbf{W}_{k+1}\mathbf{A}_{z,k+1})^{-1}\label{eqn:supp 34}.
\end{align}
\normalsize

\subsection{Recursive Solutions for Extended State}\label{sec:extended}
In the following, we restate the updates to compute $\hat{\mathbf{z}}_{k+1|k+1}$ recursively as obtained in \cite[Appendix~A.1]{pan2010applying}. The procedure involves first expressing $\mathbf{A}_{z,k+1}$, $\mathbf{Y}_{k+1}$ and $\mathbf{W}_{k+1}$ in terms of $\mathbf{A}_{z,k}$, $\mathbf{Y}_{k}$ and $\mathbf{W}_{k}$ as follows
\par\noindent\small
\begin{align}
    &\mathbf{A}_{z,k+1}=\begin{bmatrix}
        \mathbf{A}_{z,k}\overline{\bm{\Phi}}_{k+1,k}^{-1}&-\mathbf{A}_{z,k}\overline{\bm{\Phi}}_{k+1,k}^{-1}\hat{\mathbf{B}}_{k}\\
        \widetilde{\mathbf{H}}_{k+1}&\mathbf{0}_{p\times m}
    \end{bmatrix},\label{eqn: A1 first part}\\
    &\mathbf{Y}_{k+1}=\begin{bmatrix}
        \mathbf{Y}_{k}+\mathbf{A}_{z,k}\overline{\bm{\Phi}}_{k+1,k}^{-1}\widetilde{\mathbf{U}}_{k}\\
        \mathbf{y}_{k+1}-\widetilde{\mathbf{u}}_{k+1|k}
    \end{bmatrix},\label{eqn: A2 first part}\\
    &\mathbf{W}_{k+1}=\begin{bmatrix}
        \widetilde{\mathbf{W}}_{k}&\mathbf{0}_{pk\times p}\\
        \mathbf{0}_{p\times pk}&\mathbf{R}_{k+1}^{-1}
    \end{bmatrix},\label{eqn: A3 above first part}
\end{align}
\normalsize
where $\mathbf{R}_{k}$ is the (time-varying) noise covariance matrix of measurement noise $\mathbf{v}_{k}$ in \eqref{eqn: non y withoutdf}. Also,
\par\noindent\small
\begin{align}
    &\overline{\bm{\Phi}}_{k+1,k}^{-1}=\begin{bmatrix}
        \bm{\Phi}_{k+1|k}^{-1}&\mathbf{0}_{n\times m(k-1)}\\
        \mathbf{0}_{m(k-1)\times n}&\mathbf{I}_{m(k-1)}
    \end{bmatrix},\label{eqn: A1 second part}\\
    &\widetilde{\mathbf{U}}_{k}=\begin{bmatrix}
        \overline{\mathbf{u}}_{k}\\
        \mathbf{0}_{m(k-1)\times 1}
    \end{bmatrix},\label{eqn: A2 second part}\\    &\widetilde{\mathbf{W}}_{k}=\left(\mathbf{W}_{k}^{-1}+\mathbf{A}_{z,k}\overline{\bm{\Phi}}_{k+1,k}^{-1}\widetilde{\mathbf{Q}}_{k}\overline{\bm{\Phi}}_{k+1,k}^{-T}\mathbf{A}_{z,k}^{T}\right)^{-1},\label{eqn: A3 above second part}\\
    &\widetilde{\mathbf{Q}}_{k}=\begin{bmatrix}
        \mathbf{Q}_{k}&\mathbf{0}_{n\times m(k-1)}\\
        \mathbf{0}_{m(k-1)\times n}&\mathbf{0}_{m(k-1)\times m(k-1)}
    \end{bmatrix},\label{eqn: A3 first part}\\
    &\hat{\mathbf{B}}_{k}=\begin{bmatrix}
        \mathbf{B}_{k}\\\mathbf{0}_{m(k-1)\times m}
    \end{bmatrix}.\label{eqn: A3 second part}
\end{align}
\normalsize
and $\mathbf{Q}_{k}$ is the (time-varying) noise covariance matrix of process noise $\mathbf{w}_{k}$ in \eqref{eqn: non x with input}. Here, $(\cdot)^{-T}$ denotes $((\cdot)^{-1})^{T}$. Basically, the relation between $\mathbf{A}_{z,k+1}$ and $\mathbf{A}_{z,k}$ is through $\bm{\Phi}_{k+1,k}$. The relations \eqref{eqn: A1 first part}-\eqref{eqn: A3 above first part} are then obtained through comparison following a similar procedure as in \cite{yang2007adaptive} for EKF-with-DF case.

Finally, consider the following matrix inversion formulas
\par\noindent\small
\begin{align}
    &\begin{bmatrix}
        \mathbf{A}_{1}&\mathbf{A}_{2}\\
        \mathbf{A}_{3}&\mathbf{A}_{4}
    \end{bmatrix}^{-1}=\begin{bmatrix}
    \mathbf{A}_{1}^{-1}+\mathbf{A}_{1}^{-1}\mathbf{A}_{2}\mathbf{A}_{q}^{-1}\mathbf{A}_{3}\mathbf{A}_{1}^{-1}&-\mathbf{A}_{1}^{-1}\mathbf{A}_{2}\mathbf{A}_{q}^{-1}\\
    -\mathbf{A}_{q}^{-1}\mathbf{A}_{3}\mathbf{A}_{1}^{-1}&\mathbf{A}_{q}^{-1}
    \end{bmatrix},\label{eqn: A4}
\end{align}    
and
\begin{align}
    &(\mathbf{C}_{1}+\mathbf{C}_{2}\mathbf{C}_{3}\mathbf{C}_{4})^{-1}\nonumber\\
    &=\mathbf{C}_{1}^{-1}-\mathbf{C}_{1}^{-1}\mathbf{C}_{2}(\mathbf{C}_{3}^{-1}+\mathbf{C}_{4}\mathbf{C}_{1}^{-1}\mathbf{C}_{2})^{-1}\mathbf{C}_{4}\mathbf{C}_{1}^{-1},\label{eqn: A5}
\end{align}
\normalsize
where $\mathbf{A}_{q}=\mathbf{A}_{4}-\mathbf{A}_{3}\mathbf{A}_{1}^{-1}\mathbf{A}_{2}$.

Substituting \eqref{eqn: A1 first part} and \eqref{eqn: A3 above first part} in \eqref{eqn:supp 34}, we obtain
\par\noindent\small
\begin{align}
    \mathbf{P}_{z,k+1}=\begin{bmatrix}
        \mathbf{P}_{1,k+1}&\mathbf{P}_{2,k+1}\\
       \mathbf{P}_{3,k+1}&\mathbf{P}_{4,k+1}        
    \end{bmatrix}^{-1},\nonumber
\end{align}
\normalsize
where\\
$\mathbf{P}_{1,k+1}=\overline{\bm{\Phi}}_{k+1,k}^{-T}\mathbf{A}_{z,k}^{T}\widetilde{\mathbf{W}}_{k}\mathbf{A}_{z,k}\overline{\bm{\Phi}}_{k+1,k}^{-1}+\widetilde{\mathbf{H}}_{k+1}^{T}\mathbf{R}_{k+1}^{-1}\widetilde{\mathbf{H}}_{k+1}$,\\
$\mathbf{P}_{2,k+1}=-\overline{\bm{\Phi}}_{k+1,k}^{-T}\mathbf{A}_{z,k}^{T}\widetilde{\mathbf{W}}_{k}\mathbf{A}_{z,k}\overline{\bm{\Phi}}_{k+1,k}^{-1}\hat{\mathbf{B}}_{k}$, $\mathbf{P}_{3,k+1}=-\hat{\mathbf{B}}_{k}^{T}\overline{\bm{\Phi}}_{k+1,k}^{-T}\mathbf{A}_{z,k}^{T}\widetilde{\mathbf{W}}_{k}\mathbf{A}_{z,k}\overline{\bm{\Phi}}_{k+1,k}^{-1}$ and $\mathbf{P}_{4,k+1}=\hat{\mathbf{B}}_{k}^{T}\overline{\bm{\Phi}}_{k+1,k}^{-T}\mathbf{A}_{z,k}^{T}\widetilde{\mathbf{W}}_{k}\mathbf{A}_{z,k}\overline{\bm{\Phi}}_{k+1,k}^{-1}\hat{\mathbf{B}}_{k}$. This is then simplified using \eqref{eqn: A4} and \eqref{eqn: A5}, similar to the procedure followed in \cite[Appendix~A]{yang2007adaptive}, to obtain the final recursive solutions for $\hat{\mathbf{z}}_{k+1|k+1}$. Define $\overline{\mathbf{P}}_{z,k+1}\doteq[\overline{\bm{\Phi}}_{k+1,k}^{-T}\mathbf{A}_{z,k}^{T}\widetilde{\mathbf{W}}_{k}\mathbf{A}_{z,k}\overline{\bm{\Phi}}_{k+1,k}^{-1}+\widetilde{\mathbf{H}}_{k+1}^{T}\mathbf{R}_{k+1}^{-1}\widetilde{\mathbf{H}}_{k+1}]^{-1}$. The recursive updates for $\hat{\mathbf{z}}_{k+1|k+1}$ are
\par\noindent\small
\begin{align}
    &\widetilde{\mathbf{P}}_{z,k+1}=\overline{\bm{\Phi}}_{k+1,k}\mathbf{P}_{z,k}\overline{\bm{\Phi}}_{k+1,k}^{T}+\widetilde{\mathbf{Q}}_{k},\label{eqn: A8 first part}\\
    &\mathbf{K}_{z,k+1}=\widetilde{\mathbf{P}}_{z,k+1}\widetilde{\mathbf{H}}_{k+1}^{T}(\mathbf{R}_{k+1}+\widetilde{\mathbf{H}}_{k+1}\widetilde{\mathbf{P}}_{z,k+1}\widetilde{\mathbf{H}}_{k+1}^{T})^{-1},\label{eqn: A8 second part}\\
    &\overline{\mathbf{P}}_{z,k+1}=(\mathbf{I}-\mathbf{K}_{z,k+1}\widetilde{\mathbf{H}}_{k+1})\widetilde{\mathbf{P}}_{z,k+1},\label{eqn: A9 first part}\\
    &\mathbf{S}_{k+1}=(\hat{\mathbf{B}}_{k}^{T}\widetilde{\mathbf{P}}_{z,k+1}^{-1}\mathbf{K}_{z,k+1}\widetilde{\mathbf{H}}_{k+1}\hat{\mathbf{B}}_{k})^{-1}\label{eqn: A9 second part}\\
    &\overline{\mathbf{z}}_{k+1}=(\overline{\bm{\Phi}}_{k+1,k}\hat{\mathbf{z}}_{k|k}+\widetilde{\mathbf{U}}_{k})\nonumber\\
    &\;\;\;+\mathbf{K}_{z,k+1}(\mathbf{y}_{k+1}-\widetilde{\mathbf{u}}_{k+1|k}-\widetilde{\mathbf{H}}_{k+1}(\overline{\bm{\Phi}}_{k+1,k}\hat{\mathbf{z}}_{k|k}+\widetilde{\mathbf{U}}_{k}))\label{eqn: A7}\\
    &\hat{\mathbf{u}}_{k|k+1}=-\mathbf{S}_{k+1}\hat{\mathbf{B}}_{k}^{T}\widetilde{\mathbf{P}}_{z,k+1}^{-1}(\overline{\bm{\Phi}}_{k+1,k}\hat{\mathbf{z}}_{k|k}+\widetilde{\mathbf{U}}_{k}-\overline{\mathbf{z}}_{k+1}),\label{eqn:A6}\\
    &\hat{\mathbf{z}}_{k+1|k+1}=\begin{bmatrix}
        \overline{\mathbf{z}}_{k+1}+\overline{\mathbf{P}}_{z,k+1}\widetilde{\mathbf{P}}_{z,k+1}^{-1}\hat{\mathbf{B}}_{k}\hat{\mathbf{u}}_{k|k+1}\\
        \hat{\mathbf{u}}_{k|k+1}
    \end{bmatrix}.\label{eqn: supp 35}
\end{align}
\normalsize

\subsection{Forward EKF-without-DF recursions}\label{sec:recursions}
From the extended state estimate $\hat{\mathbf{z}}_{k+1|k+1}$, we are only interested in $\hat{\mathbf{x}}_{k+1|k+1}$ and $\hat{\mathbf{u}}_{k|k+1}$ estimates of the current state $\mathbf{x}_{k+1}$ and unknown input $\mathbf{u}_{k}$, respectively. Hence, in this section, we simplify \eqref{eqn: A8 first part}-\eqref{eqn: supp 35} to obtain the forward EKF-without-DF recursions for computing $\hat{\mathbf{x}}_{k+1|k+1}$ and $\hat{\mathbf{u}}_{k|k+1}$. By definition, $\hat{\mathbf{z}}_{k+1|k+1}$ can be partitioned as $\hat{\mathbf{z}}_{k+1|k+1}=[\hat{\mathbf{x}}_{k+1|k+1}^{T},\hat{\mathbf{U}}_{k|k+1}^{T},\hat{\mathbf{u}}_{k|k+1}^{T}]$, where $\hat{\mathbf{U}}_{k|k+1}=[\hat{\mathbf{u}}_{1|k+1}^{T},\hat{\mathbf{u}}_{2|k+1}^{T},\hdots,\hat{\mathbf{u}}_{k-1|k+1}^{T}]$. Comparing with \eqref{eqn: supp 35}, we have
\par\noindent\small
\begin{align}
    \begin{bmatrix}
        \hat{\mathbf{x}}_{k+1|k+1}\\
        \hat{\mathbf{U}}_{k|k+1}    \end{bmatrix}=\overline{\mathbf{z}}_{k+1}+\overline{\mathbf{P}}_{z,k+1}\widetilde{\mathbf{P}}_{z,k+1}^{-1}\hat{\mathbf{B}}_{k}\hat{\mathbf{u}}_{k|k+1},\label{eqn:A11}
\end{align}
\normalsize
and $\hat{\mathbf{u}}_{k|k+1}$ is given by \eqref{eqn:A6}. Also, $\mathbf{P}_{z,k+1}$ can be partitioned as $\mathbf{P}_{z,k+1}=\begin{bmatrix}
    \mathbf{P}_{x,k+1|k+1}&\mathbf{P}_{xu,k+1|k+1}\\
    \mathbf{P}_{ux,k+1|k+1}&\mathbf{P}_{u,k+1|k+1}
\end{bmatrix}$. We can observe that the recursive solution for $\hat{\mathbf{x}}_{k+1|k+1}$ can be obtained from the top $n$ elements of the right side of \eqref{eqn:A11} while the recursive solution for $\hat{\mathbf{u}}_{k|k+1}$ is obtained by simplifying \eqref{eqn:A6}. Hence, we first obtain appropriate partitions for $\widetilde{\mathbf{P}}_{z,k+1}$, $\mathbf{K}_{z,k+1}$, $\overline{\mathbf{z}}_{k+1}$ and $\overline{\mathbf{P}}_{z,k+1}$ in the following section. The simplified forward EKF-without-DF recursions are then obtained in Section~\ref{subsec: recursive estimates} using these partitions. For simplicity, in the following, we omit the dimensions of the zero and identity matrices and represent the (appropriate-size) matrices as $\mathbf{0}$ and $\mathbf{I}$, respectively.

\subsubsection{Partitions for $\widetilde{\mathbf{P}}_{z,k+1}$, $\mathbf{K}_{z,k+1}$, $\overline{\mathbf{z}}_{k+1}$ and $\overline{\mathbf{P}}_{z,k+1}$}\label{subsec:partitions}
\textbf{1) Partition for $\widetilde{\mathbf{P}}_{z,k+1}$:} Applying \eqref{eqn: A4} to \eqref{eqn: A1 second part}, we obtain
\par\noindent\small
\begin{align}
    \overline{\bm{\Phi}}_{k+1,k}=\begin{bmatrix}
    \bm{\Phi}_{k+1,k}&\mathbf{0}\\ \mathbf{0}&\mathbf{I}
\end{bmatrix},\label{i}
\end{align}
\normalsize
Substituting \eqref{i}, \eqref{eqn: A3 first part} and $\mathbf{P}_{z,k}$ in \eqref{eqn: A8 first part}, we have
\par\noindent\small
\begin{align}
    &\widetilde{\mathbf{P}}_{z,k+1}=\begin{bmatrix}
        \widetilde{\mathbf{P}}_{z11,k+1}&\widetilde{\mathbf{P}}_{z12,k+1}\\
        \widetilde{\mathbf{P}}_{z21,k+1}&\widetilde{\mathbf{P}}_{z22,k+1}
    \end{bmatrix}\nonumber\\
    &=\begin{bmatrix}
        \bm{\Phi}_{k+1,k}\mathbf{P}_{x,k|k}\bm{\Phi}_{k+1,k}^{T}+\mathbf{Q}_{k}&\bm{\Phi}_{k+1,k}\mathbf{P}_{xu,k|k}\\
        \mathbf{P}_{ux,k|k}\bm{\Phi}_{k+1,k}^{T}&\mathbf{P}_{u,k|k}
    \end{bmatrix}.\label{A12}
\end{align}
\normalsize

\textbf{2) Partition for $\mathbf{K}_{z,k+1}$:} Using \eqref{eqn: supp 31} and the partitioned form of $\widetilde{\mathbf{P}}_{z,k+1}$ from \eqref{A12}, we have
\par\noindent\small
\begin{align}
    &\widetilde{\mathbf{P}}_{z,k+1}\widetilde{\mathbf{H}}^{T}_{k+1}=\begin{bmatrix}        \widetilde{\mathbf{P}}_{z11,k+1}\mathbf{H}^{T}_{k+1}\\\widetilde{\mathbf{P}}_{z21,k+1}\mathbf{H}^{T}_{k+1}\end{bmatrix},\nonumber\\    &\widetilde{\mathbf{H}}_{k+1}\widetilde{\mathbf{P}}_{z,k+1}\widetilde{\mathbf{H}}^{T}_{k+1}=\mathbf{H}_{k+1}\widetilde{\mathbf{P}}_{z11,k+1}\mathbf{H}^{T}_{k+1},\label{eqn:HpH}
\end{align}
\normalsize
Substituting this in \eqref{eqn: A8 second part}, we have
\par\noindent\small
\begin{align}
    &\mathbf{K}_{z,k+1}=\begin{bmatrix}
        \mathbf{K}_{x,k+1}\\
        \mathbf{K}_{u,k+1}
    \end{bmatrix}\nonumber\\
    &=\begin{bmatrix}
        \widetilde{\mathbf{P}}_{z11,k+1}\mathbf{H}^{T}_{k+1}(\mathbf{R}_{k+1}+\mathbf{H}_{k+1}\widetilde{\mathbf{P}}_{z11,k+1}\mathbf{H}^{T}_{k+1})^{-1}\\
        \widetilde{\mathbf{P}}_{z21,k+1}\mathbf{H}^{T}_{k+1}(\mathbf{R}_{k+1}+\mathbf{H}_{k+1}\widetilde{\mathbf{P}}_{z11,k+1}\mathbf{H}^{T}_{k+1})^{-1}
    \end{bmatrix}.\label{A13}
\end{align}
\normalsize

\textbf{3) Partition for $\overline{\mathbf{z}}_{k+1}$:} By definition, $\hat{\mathbf{z}}_{k|k}=[\hat{\mathbf{x}}_{k|k}^{T},\hat{\mathbf{U}}_{k|k}^{T}]^{T}$ with $\hat{\mathbf{U}}_{k|k}=[\hat{\mathbf{u}}_{1|k}^{T},\hat{\mathbf{u}}_{2|k}^{T},\hdots,\hat{\mathbf{u}}_{k-1|k}^{T}]$. Hence, using \eqref{i} and \eqref{eqn: A2 second part}, we have $\overline{\bm{\Phi}}_{k+1,k}\hat{\mathbf{z}}_{k|k}+\widetilde{\mathbf{U}}_{k}=\begin{bmatrix}
    \bm{\Phi}_{k+1,k}\hat{\mathbf{x}}_{k|k}+\overline{\mathbf{u}}_{k}\\\hat{\mathbf{U}}_{k|k}
\end{bmatrix}$. Denote $\widetilde{\mathbf{z}}_{k}\doteq\bm{\Phi}_{k+1,k}\hat{\mathbf{x}}_{k|k}+\overline{\mathbf{u}}_{k}$. Now, using \eqref{eqn: supp 31}, we have
\par\noindent\small
\begin{align}
    \widetilde{\mathbf{H}}_{k+1}(\overline{\bm{\Phi}}_{k+1,k}\hat{\mathbf{z}}_{k|k}+\widetilde{\mathbf{U}}_{k})=\mathbf{H}_{k+1}\widetilde{\mathbf{z}}_{k}.\label{ii}
\end{align}
\normalsize
Hence, \eqref{eqn: A7} becomes
\par\noindent\small
\begin{align}
    \overline{\mathbf{z}}_{k}=\begin{bmatrix}
        \widetilde{\mathbf{z}}_{k}\\\hat{\mathbf{U}}_{k|k}
    \end{bmatrix}+\mathbf{K}_{z,k+1}(\mathbf{y}_{k+1}-\widetilde{\mathbf{u}}_{k+1|k}-\mathbf{H}_{k+1}\widetilde{\mathbf{z}}_{k}),\nonumber
\end{align}
\normalsize
which on using the partitioned form of $\mathbf{K}_{z,k+1}$ from \eqref{A13} yields
\par\noindent\small
\begin{align}
    \overline{\mathbf{z}}_{k}=\begin{bmatrix}
        \widetilde{\mathbf{z}}_{k}+\mathbf{K}_{x,k+1}(\mathbf{y}_{k+1}-\widetilde{\mathbf{u}}_{k+1|k}-\mathbf{H}_{k+1}\widetilde{\mathbf{z}}_{k})\\
        \hat{\mathbf{U}}_{k|k}+\mathbf{K}_{u,k+1}(\mathbf{y}_{k+1}-\widetilde{\mathbf{u}}_{k+1|k}-\mathbf{H}_{k+1}\widetilde{\mathbf{z}}_{k})
    \end{bmatrix},\label{A14}
\end{align}
\normalsize
which is the corrected \cite[Eq.~A14]{pan2010applying}.

\textbf{4) Partition for $\overline{\mathbf{P}}_{z,k+1}$:} Substituting for $\mathbf{K}_{z,k+1}$ from \eqref{A13}, $\widetilde{\mathbf{H}}_{k+1}$ from \eqref{eqn: supp 31} and $\widetilde{\mathbf{P}}_{z,k+1}$ from \eqref{A12} in \eqref{eqn: A9 first part}, we have
\par\noindent\small
\begin{align}
    \overline{\mathbf{P}}_{z,k+1}=\begin{bmatrix}
       \overline{\mathbf{P}}_{1,k+1}&\overline{\mathbf{P}}_{2,k+1}\\
        \overline{\mathbf{P}}_{3,k+1}&\overline{\mathbf{P}}_{4,k+1}
    \end{bmatrix},\label{A15}
\end{align}
\normalsize
where $\overline{\mathbf{P}}_{1,k+1}=(\mathbf{I}-\mathbf{K}_{x,k+1}\mathbf{H}_{k+1})\widetilde{\mathbf{P}}_{z11,k+1}$, $\overline{\mathbf{P}}_{2,k+1}=(\mathbf{I}-\mathbf{K}_{x,k+1}\mathbf{H}_{k+1})\widetilde{\mathbf{P}}_{z12,k+1}$, $\overline{\mathbf{P}}_{3,k+1}=\widetilde{\mathbf{P}}_{e21,k+1}-\mathbf{K}_{u,k+1}\mathbf{H}_{k+1}\widetilde{\mathbf{P}}_{z11,k+1}$ and $\overline{\mathbf{P}}_{4,k+1}=\widetilde{\mathbf{P}}_{z22,k+1}-\mathbf{K}_{u,k+1}\mathbf{H}_{k+1}\widetilde{\mathbf{P}}_{z12,k+1}$.

\subsubsection{Recursions for $\hat{\mathbf{x}}_{k+1|k+1}$ and $\hat{\mathbf{u}}_{k|k+1}^{T}$}\label{subsec: recursive estimates}
\textbf{1) $\mathbf{S}_{k+1}$ update:} From \eqref{eqn: A8 second part}, we have
\par\noindent\small
\begin{align}
    \widetilde{\mathbf{P}}_{z,k+1}^{-1}\mathbf{K}_{z,k+1}=\widetilde{\mathbf{H}}^{T}_{k+1}(\mathbf{R}_{k+1}+\widetilde{\mathbf{H}}_{k+1}\widetilde{\mathbf{P}}_{z,k+1}\widetilde{\mathbf{H}}^{T}_{k+1})^{-1},\label{v}
\end{align}
\normalsize
Substituting in \eqref{eqn: A9 second part}, we have $\mathbf{S}_{k+1}=(\hat{\mathbf{B}}^{T}_{k}\widetilde{\mathbf{H}}^{T}_{k+1}(\mathbf{R}_{k+1}+\widetilde{\mathbf{H}}_{k+1}\widetilde{\mathbf{P}}_{z,k+1}\widetilde{\mathbf{H}}^{T}_{k+1})^{-1}\widetilde{\mathbf{H}}_{k+1}\hat{\mathbf{B}}_{k})^{-1}$.

Next, using \eqref{eqn: supp 31} and \eqref{eqn: A3 second part}, we have $\widetilde{\mathbf{H}}_{k+1}\hat{\mathbf{B}}_{k}=\mathbf{H}_{k+1}\mathbf{B}_{k}$. Using this and \eqref{eqn:HpH}, we have 
\par\noindent\small
\begin{align}
    \mathbf{S}_{k+1}=\left(\mathbf{B}_{k}^{T}\mathbf{H}_{k+1}^{T}(\mathbf{R}_{k+1}+\mathbf{H}_{k+1}\widetilde{\mathbf{P}}_{z11,k+1}\mathbf{H}_{k+1}^{T})^{-1}\mathbf{H}_{k+1}\mathbf{B}_{k}\right)^{-1}.\label{viS}
\end{align}
\normalsize
Now, from \eqref{A13}, $\mathbf{K}_{x,k+1}=\widetilde{\mathbf{P}}_{z11,k+1}\mathbf{H}_{k+1}^{T}(\mathbf{R}_{k+1}+\mathbf{H}_{k+1}\widetilde{\mathbf{P}}_{z11,k+1}\mathbf{H}_{k+1}^{T})^{-1}$ which implies $\mathbf{I}-\mathbf{H}_{k+1}\mathbf{K}_{x,k+1}=\mathbf{I}-\mathbf{H}_{k+1}\widetilde{\mathbf{P}}_{z11,k+1}\mathbf{H}_{k+1}^{T}(\mathbf{R}_{k+1}+\mathbf{H}_{k+1}\widetilde{\mathbf{P}}_{z11,k+1}\mathbf{H}_{k+1}^{T})^{-1}$. Comparing with \eqref{eqn: A5} with $\mathbf{C}_{1}^{-1}=\mathbf{I}$, $\mathbf{C}_{2}=\mathbf{H}_{k+1}\widetilde{\mathbf{P}}_{z11,k+1}\mathbf{H}_{k+1}^{T}$, $\mathbf{C}_{3}^{-1}=\mathbf{R}_{k+1}$ and $\mathbf{C}_{4}=\mathbf{I}$, we have $\mathbf{I}-\mathbf{H}_{k+1}\mathbf{K}_{x,k+1}=(\mathbf{I}+\mathbf{H}_{k+1}\widetilde{\mathbf{P}}_{z11,k+1}\mathbf{H}_{k+1}^{T}\mathbf{R}_{k+1}^{-1})^{-1}$ which implies $\mathbf{R}_{k+1}^{-1}(\mathbf{I}-\mathbf{H}_{k+1}\mathbf{K}_{x,k+1})=(\mathbf{R}_{k+1}+\mathbf{H}_{k+1}\widetilde{\mathbf{P}}_{z11,k+1}\mathbf{H}_{k+1}^{T})^{-1}$. Substituting in \eqref{viS}, we obtain
\par\noindent\small
\begin{align}
        \mathbf{S}_{k+1}=\left(\mathbf{B}_{k}^{T}\mathbf{H}_{k+1}^{T}\mathbf{R}_{k+1}^{-1}(\mathbf{I}-\mathbf{H}_{k+1}\mathbf{K}_{x,k+1})\mathbf{H}_{k+1}\mathbf{B}_{k}\right)^{-1}.\label{40}
\end{align}
\normalsize
Representing $\mathbf{S}_{k+1}$ by $\bm{\Sigma}^{u}_{k}$ and $\mathbf{K}_{x,k+1}$ by $\mathbf{K}^{x}_{k+1}$, \eqref{40} is the $\bm{\Sigma}^{u}_{k}$ update step of forward EKF-without-DF in Section~\ref{subsubsec:forward EKF without DF}.

\textbf{2) $\hat{\mathbf{u}}_{k|k+1}$ update:} From \eqref{eqn: A7} and \eqref{ii}, we have $\overline{\bm{\Phi}}_{k+1,k}\hat{\mathbf{z}}_{k|k}+\widetilde{\mathbf{U}}_{k}-\overline{\mathbf{z}}_{k+1}=-\mathbf{K}_{z,k+1}(\mathbf{y}_{k+1}-\widetilde{\mathbf{u}}_{k+1|k}-\mathbf{H}_{k+1|k}\widetilde{\mathbf{z}}_{k})$. Hence, from \eqref{eqn:A6}, $\hat{\mathbf{u}}_{k|k+1}=\mathbf{S}_{k+1}\hat{\mathbf{B}}_{k}^{T}\widetilde{\mathbf{P}}_{z,k+1}^{-1}\mathbf{K}_{z,k+1}(\mathbf{y}_{k+1}-\widetilde{\mathbf{u}}_{k+1|k}-\mathbf{H}_{k+1}\widetilde{\mathbf{z}}_{k})$. Again, substituting for $\widetilde{\mathbf{P}}_{z,k+1}^{-1}\mathbf{K}_{z,k+1}$ from \eqref{v}, we have $\hat{\mathbf{u}}_{k|k+1}=\mathbf{S}_{k+1}\hat{\mathbf{B}}_{k}^{T}\widetilde{\mathbf{H}}^{T}_{k+1}(\mathbf{R}_{k+1}+\widetilde{\mathbf{H}}_{k+1}\widetilde{\mathbf{P}}_{z,k+1}\widetilde{\mathbf{H}}^{T}_{k+1})^{-1}(\mathbf{y}_{k+1}-\widetilde{\mathbf{u}}_{k+1|k}-\mathbf{H}_{k+1}\widetilde{\mathbf{z}}_{k})$. Now, substituting $\widetilde{\mathbf{H}}_{k+1}\hat{\mathbf{B}}_{k}=\mathbf{H}_{k+1}\mathbf{B}_{k}$ and $(\mathbf{R}_{k+1}+\widetilde{\mathbf{H}}_{k+1}\widetilde{\mathbf{P}}_{z,k+1}\widetilde{\mathbf{H}}^{T}_{k+1})^{-1}=(\mathbf{R}_{k+1}+\mathbf{H}_{k+1}\widetilde{\mathbf{P}}_{z11,k+1}\mathbf{H}_{k+1}^{T})^{-1}=\mathbf{R}_{k+1}^{-1}(\mathbf{I}-\mathbf{H}_{k+1}\mathbf{K}_{x,k+1})$ as obtained in the previous step, we have
\par\noindent\small
\begin{align}
    \hat{\mathbf{u}}_{k|k+1}&=\mathbf{S}_{k+1}\mathbf{B}_{k}^{T}\mathbf{H}_{k+1}^{T}\mathbf{R}_{k+1}^{-1}(\mathbf{I}-\mathbf{H}_{k+1}\mathbf{K}_{x,k+1})\nonumber\\
    &\;\;\;\times(\mathbf{y}_{k+1}-\widetilde{\mathbf{u}}_{k+1|k}-\mathbf{H}_{k+1}\widetilde{\mathbf{z}}_{k}).\label{viu}
\end{align}
\normalsize
    
Now, we simplify $(\mathbf{y}_{k+1}-\widetilde{\mathbf{u}}_{k+1|k}-\mathbf{H}_{k+1}\widetilde{\mathbf{z}}_{k})$. From \eqref{eqn:supp 23}, using $i=k$, we have $\mathbf{x}_{k}=\bm{\Phi}^{-1}_{k+1,k}\mathbf{x}_{k+1}-\bm{\Phi}^{-1}_{k+1,k}(\mathbf{B}_{k}\mathbf{u}_{k}+\overline{\mathbf{u}}_{k}+\mathbf{w}_{k})$ which implies $\mathbf{x}_{k+1}=\bm{\Phi}_{k+1,k}\mathbf{x}_{k}+\mathbf{B}_{k}\mathbf{u}_{k}+\overline{\mathbf{u}}_{k}+\mathbf{w}_{k}$. But by definition of $\bm{\Phi}_{k+1,k}$, we have $\bm{\Phi}_{k+1,k}^{-1}=\mathbf{F}_{k}^{-1}$ such that $\bm{\Phi}_{k+1,k}=\mathbf{F}_{k}$. Hence, $\mathbf{x}_{k+1}=\mathbf{F}_{k}\mathbf{x}_{k}+\mathbf{B}_{k}\mathbf{u}_{k}+\overline{\mathbf{u}}_{k}+\mathbf{w}_{k}$. From \eqref{eqn:supp 20}, this is the linearized form of \eqref{eqn: non x with input}. We obtain the predicted state $\hat{\mathbf{x}}_{k+1|k}$ by substituting $\hat{\mathbf{x}}_{k|k}$ and $\hat{\mathbf{u}}_{k-1|k}$ (previous estimates) in place of $\mathbf{x}_{k}$ and $\mathbf{u}_{k}$, respectively, with the noise $\mathbf{w}_{k}$ taken as $\mathbf{0}$. Hence,
\par\noindent\small
\begin{align}
    \hat{\mathbf{x}}_{k+1|k}=\mathbf{F}_{k}\hat{\mathbf{x}}_{k|k}+\mathbf{B}_{k}\hat{\mathbf{u}}_{k-1|k}+\overline{\mathbf{u}}_{k}.\label{vii}
\end{align}
\normalsize
But similar to EKF, instead of the linearized approximation, we use the non-linear state transition function itself for state prediction for reduced errors, i.e.,
\par\noindent\small
\begin{align}
    \hat{\mathbf{x}}_{k+1|k}=f(\hat{\mathbf{x}}_{k|k},\hat{\mathbf{u}}_{k-1|k}).\label{38}
\end{align}
\normalsize
This is the prediction step \eqref{eqn: ekfwithoutdf predict} of forward EKF-without-DF.

Now, by definition, $\widetilde{\mathbf{z}}_{k}=\bm{\Phi}_{k+1,k}\hat{\mathbf{x}}_{k|k}+\overline{\mathbf{u}}_{k}=\mathbf{F}_{k}\hat{\mathbf{x}}_{k|k}+\overline{\mathbf{u}}_{k}$. Using \eqref{vii}, we have $\widetilde{\mathbf{z}}_{k}=\hat{\mathbf{x}}_{k+1|k}-\mathbf{B}_{k}\hat{\mathbf{u}}_{k-1|k}$. Using this along with \eqref{eqn: supp 26} with $i=k+1$, we have $\mathbf{y}_{k+1}-\widetilde{\mathbf{u}}_{k+1|k}-\mathbf{H}_{k+1}\widetilde{\mathbf{z}}_{k}=\mathbf{y}_{k+1}-(h(\hat{\mathbf{x}}_{k+1|k})-\mathbf{H}_{k+1}\hat{\mathbf{x}}_{k+1|k})-\mathbf{H}_{k+1}\widetilde{\mathbf{z}}_{k}=\mathbf{y}_{k+1}-h(\hat{\mathbf{x}}_{k+1|k})+\mathbf{H}_{k+1}\mathbf{B}_{k}\hat{\mathbf{u}}_{k-1|k}$. Substituting in \eqref{viu}, we have
\par\noindent\small
\begin{align}
    \hat{\mathbf{u}}_{k|k+1}&=\mathbf{S}_{k+1}\mathbf{B}_{k}^{T}\mathbf{H}_{k+1}^{T}\mathbf{R}_{k+1}^{-1}(\mathbf{I}-\mathbf{H}_{k+1}\mathbf{K}_{x,k+1})\nonumber\\
    &\;\;\times(\mathbf{y}_{k+1}-h(\hat{\mathbf{x}}_{k+1|k})+\mathbf{H}_{k+1}\mathbf{B}_{k}\hat{\mathbf{u}}_{k-1|k}).\label{43}
\end{align}
\normalsize
Denoting $\mathbf{K}^{u}_{k}=\mathbf{S}_{k+1}\mathbf{B}_{k}^{T}\mathbf{H}_{k+1}^{T}\mathbf{R}_{k+1}^{-1}(\mathbf{I}-\mathbf{H}_{k+1}\mathbf{K}_{x,k+1})$, \eqref{43} is the update step \eqref{eqn: ekfwithoutdf update u} of forward EKF-without-DF.

\textbf{3) $\hat{\mathbf{x}}_{k+1|k+1}$ update:} From \eqref{eqn: A9 first part}, \eqref{eqn: supp 31} and \eqref{A13}, we have $\overline{\mathbf{P}}_{z,k+1}\widetilde{\mathbf{P}}_{z,k+1}^{-1}=\begin{bmatrix}
    \mathbf{I}-\mathbf{K}_{x,k+1}\mathbf{H}_{k+1}&\mathbf{0}\\-\mathbf{K}_{u,k+1}\mathbf{H}_{k+1}&\mathbf{I}
    \end{bmatrix}$. Hence, using \eqref{eqn: A3 second part}, $\overline{\mathbf{P}}_{z,k+1}\widetilde{\mathbf{P}}_{z,k+1}^{-1}\hat{\mathbf{B}}_{k}=\begin{bmatrix}
    (\mathbf{I}-\mathbf{K}_{x,k+1}\mathbf{H}_{k+1})\mathbf{B}_{k}\\-\mathbf{K}_{u,k+1}\mathbf{H}_{k+1}\mathbf{B}_{k}\end{bmatrix}$. Substituting this and \eqref{A14} in \eqref{eqn:A11}, we have
\par\noindent\small
\begin{align}
    \begin{bmatrix}
        \hat{\mathbf{x}}_{k+1|k+1}\\\hat{\mathbf{U}}_{k|k+1}
    \end{bmatrix}&=\begin{bmatrix}
        \widetilde{\mathbf{z}}_{k}+\mathbf{K}_{x,k+1}(\mathbf{y}_{k+1}-\widetilde{\mathbf{u}}_{k+1|k}-\mathbf{H}_{k+1}\widetilde{\mathbf{z}}_{k})\\
        \hat{\mathbf{U}}_{k|k}+\mathbf{K}_{u,k+1}(\mathbf{y}_{k+1}-\widetilde{\mathbf{u}}_{k+1|k}-\mathbf{H}_{k+1}\widetilde{\mathbf{z}}_{k})
    \end{bmatrix}\nonumber\\
    &+\begin{bmatrix}
        (\mathbf{I}-\mathbf{K}_{x,k+1}\mathbf{H}_{k+1})\mathbf{B}_{k}\hat{\mathbf{u}}_{k|k+1}\\-\mathbf{K}_{u,k+1}\mathbf{H}_{k+1}\mathbf{B}_{k}\hat{\mathbf{u}}_{k|k+1}
    \end{bmatrix},\nonumber
\end{align}
\normalsize
which implies $\hat{\mathbf{x}}_{k+1|k+1}= \widetilde{\mathbf{z}}_{k}+\mathbf{K}_{x,k+1}(\mathbf{y}_{k+1}-\widetilde{\mathbf{u}}_{k+1|k}-\mathbf{H}_{k+1}\widetilde{\mathbf{z}}_{k})+(\mathbf{I}-\mathbf{K}_{x,k+1}\mathbf{H}_{k+1})\mathbf{B}_{k}\hat{\mathbf{u}}_{k|k+1}$.

Again, $\widetilde{\mathbf{z}}_{k}=\hat{\mathbf{x}}_{k+1|k}-\mathbf{B}_{k}\hat{\mathbf{u}}_{k-1|k}$ and $\mathbf{y}_{k+1}-\widetilde{\mathbf{u}}_{k+1|k}-\mathbf{H}_{k+1}\widetilde{\mathbf{z}}_{k}=\mathbf{y}_{k+1}-h(\hat{\mathbf{x}}_{k+1|k})+\mathbf{H}_{k+1}\mathbf{B}_{k}\hat{\mathbf{u}}_{k-1|k}$. Hence,
\par\noindent\small
\begin{align}
    &\hat{\mathbf{x}}_{k+1|k+1}=\hat{\mathbf{x}}_{k+1|k}-\mathbf{B}_{k}\hat{\mathbf{u}}_{k-1|k}+(\mathbf{I}-\mathbf{K}_{x,k+1}\mathbf{H}_{k+1})\mathbf{B}_{k}\hat{\mathbf{u}}_{k|k+1}\nonumber\\
    &\;\;\;+\mathbf{K}_{x,k+1}(\mathbf{y}_{k+1}-h(\hat{\mathbf{x}}_{k+1|k})+\mathbf{H}_{k+1}\mathbf{B}_{k}\hat{\mathbf{u}}_{k-1|k})\nonumber\\
    &=\hat{\mathbf{x}}_{k+1|k}+\mathbf{K}_{x,k+1}(\mathbf{y}_{k+1}-h(\hat{\mathbf{x}}_{k+1|k}))\nonumber\\
    &\;\;\;+(\mathbf{I}-\mathbf{K}_{x,k+1}\mathbf{H}_{k+1})\mathbf{B}_{k}(\hat{\mathbf{u}}_{k|k+1}-\hat{\mathbf{u}}_{k-1|k})\nonumber
\end{align}
\normalsize
Now, \cite{pan2010applying} assumed $\hat{\mathbf{u}}_{k|k+1}-\hat{\mathbf{u}}_{k-1|k}\approx\mathbf{0}$, i.e., the filter's unknown input estimate does not change abruptly (by a large value in one step) and hence,
\par\noindent\small
\begin{align}
    \hat{\mathbf{x}}_{k+1|k+1}=\hat{\mathbf{x}}_{k+1|k}+\mathbf{K}_{x,k+1}(\mathbf{y}_{k+1}-h(\hat{\mathbf{x}}_{k+1|k})).\label{42}
\end{align}
\normalsize
This is the update step \eqref{eqn: ekfwithoutdf update x} of the forward EKF-without-DF.

Also, define $\bm{\Sigma}^{x}_{k+1|k}\doteq\widetilde{\mathbf{P}}_{z11,k+1}$. From \eqref{A13}, we have
\par\noindent\small
\begin{align}
    \mathbf{K}_{x,k+1}=\bm{\Sigma}^{x}_{k+1|k}\mathbf{H}_{k+1}^{T}(\mathbf{R}_{k+1}+\mathbf{H}_{k+1}\bm{\Sigma}^{x}_{k+1|k}\mathbf{H}_{k+1}^{T})^{-1},\label{39}
\end{align}
\normalsize
which is the $\mathbf{K}^{x}_{k+1}$ (another notation for $\mathbf{K}_{x,k+1}$) update of the forward EKF-without-DF in Section~\ref{subsubsec:forward EKF without DF}.

From \eqref{A12}, $\bm{\Sigma}^{x}_{k+1|k}=\widetilde{\mathbf{P}}_{z11,k+1}=\bm{\Phi}_{k+1,k}\mathbf{P}_{x,k|k}\bm{\Phi}_{k+1,k}^{T}+\mathbf{Q}_{k}$. But $\bm{\Phi}_{k+1,k}=\mathbf{F}_{k}$. Hence,
\par\noindent\small
\begin{align}
    \bm{\Sigma}^{x}_{k+1|k}=\mathbf{F}_{k}\mathbf{P}_{x,k|k}\mathbf{F}_{k}^{T}+\mathbf{Q}_{k}.\label{41}
\end{align}
\normalsize
Representing $\bm{\Sigma}^{x}_{k}=\mathbf{P}_{x,k|k}$, \eqref{41} is the $\bm{\Sigma}^{x}_{k+1|k}$ update of the forward EKF-without-DF in Section~\ref{subsubsec:forward EKF without DF}.

\textbf{4) $\mathbf{P}_{x,k|k}$ update:} By definition, $\mathbf{P}_{z,k+1}$ can be partitioned as
\par\noindent\small
\begin{align}
    \mathbf{P}_{z,k+1}=\begin{bmatrix}
        \mathbf{P}_{x,k+1|k+1}&\mathbf{P}_{xu,k+1|k+1}\\\mathbf{P}_{ux,k+1|k+1}&\mathbf{P}_{u,k+1|k+1}.\label{viii}
    \end{bmatrix}
\end{align}
\normalsize
Substituting \eqref{eqn: A1 first part} and \eqref{eqn: A3 above first part} in \eqref{eqn:supp 34}, we have
\par\noindent\small
\begin{align}
    \mathbf{P}_{z,k+1}=\begin{bmatrix}
        \mathbf{P}^{1}_{z,k+1}&\mathbf{P}^{2}_{z,k+1}\\\mathbf{P}^{3}_{z,k+1}&\mathbf{P}^{4}_{z,k+1}
    \end{bmatrix}^{-1},\label{inv}
\end{align}
\normalsize
where
\par\noindent\small
\begin{align}
 &\mathbf{P}^{1}_{z,k+1}=\overline{\bm{\Phi}}_{k+1,k}^{-T}\mathbf{A}_{z,k}^{T}\widetilde{\mathbf{W}}_{k}\mathbf{A}_{z,k}\overline{\bm{\Phi}}_{k+1,k}^{-1}+\widetilde{\mathbf{H}}_{k+1}^{T}\mathbf{R}_{k+1}^{-1}\widetilde{\mathbf{H}}_{k+1},\nonumber\\
 &\mathbf{P}^{2}_{z,k+1}=-\overline{\bm{\Phi}}_{k+1,k}^{-T}\mathbf{A}_{z,k}^{T}\widetilde{\mathbf{W}}_{k}\mathbf{A}_{z,k}\overline{\bm{\Phi}}_{k+1,k}^{-1}\hat{\mathbf{B}}_{k},\nonumber\\
 &\mathbf{P}^{3}_{z,k+1}=-\hat{\mathbf{B}}_{k}^{T}\overline{\bm{\Phi}}_{k+1,k}^{-T}\mathbf{A}_{z,k}^{T}\widetilde{\mathbf{W}}_{k}\mathbf{A}_{z,k}\overline{\bm{\Phi}}_{k+1,k}^{-1},\nonumber\\
 &\mathbf{P}^{4}_{z,k+1}=\hat{\mathbf{B}}_{k}^{T}\overline{\bm{\Phi}}_{k+1,k}^{-T}\mathbf{A}_{z,k}^{T}\widetilde{\mathbf{W}}_{k}\mathbf{A}_{z,k}\overline{\bm{\Phi}}_{k+1,k}^{-1}\hat{\mathbf{B}}_{k}.\nonumber
\end{align}
\normalsize
Let
\par\noindent\small
\begin{align}
    \mathbf{P}_{z,k+1}=\begin{bmatrix}
        \mathbf{P}_{z11,k+1}&\mathbf{P}_{z12,k+1}\\\mathbf{P}_{z21,k+1}&\mathbf{P}_{z22,k+1}
    \end{bmatrix}.\label{ix}
\end{align}
\normalsize
In \cite[Appendix~A.1]{pan2010applying} (detailed steps in \cite[Appendix~A]{yang2007adaptive}) to obtain \eqref{eqn: A8 first part}-\eqref{eqn: supp 35}, \eqref{eqn: A4} is used to simplify \eqref{inv} after defining $\mathbf{S}_{k+1}\doteq\mathbf{P}_{z22,k+1}$. This yields
\par\noindent\small
\begin{align}
    \mathbf{P}_{z11,k+1}&=\overline{\mathbf{P}}_{z,k+1}+\overline{\mathbf{P}}_{z,k+1}\overline{\bm{\Phi}}_{k+1,k}^{-T}\mathbf{A}_{z,k}^{T}\widetilde{\mathbf{W}}_{k}\mathbf{A}_{z,k}\overline{\bm{\Phi}}_{k+1,k}^{-1}\hat{\mathbf{B}}_{k}\mathbf{S}_{k+1}\nonumber\\
    &\;\;\;\times\hat{\mathbf{B}}^{T}_{k}\overline{\bm{\Phi}}_{k+1,k}^{-T}\mathbf{A}_{z,k}^{T}\widetilde{\mathbf{W}}_{k}\mathbf{A}_{z,k}\overline{\bm{\Phi}}_{k+1,k}^{-1}\overline{\mathbf{P}}_{z,k+1}\nonumber
\end{align}
\normalsize

Now, \eqref{viii} and \eqref{ix} are two different partitions of $\mathbf{P}_{z,k+1}$. Comparing the dimensions, we observe that $\mathbf{P}_{x,k+1|k+1}$ is the upper-left submatrix of $\mathbf{P}_{z11,k+1}$. Denote $\mathbf{M}:=\overline{\mathbf{P}}_{z,k+1}\overline{\bm{\Phi}}_{k+1,k}^{-T}\mathbf{A}_{z,k}^{T}\widetilde{\mathbf{W}}_{k}\mathbf{A}_{z,k}\overline{\bm{\Phi}}_{k+1,k}^{-1}$. Then, $\mathbf{P}_{z11,k+1}=\overline{\mathbf{P}}_{z,k+1}+\mathbf{M}\hat{\mathbf{B}}_{k}\mathbf{S}_{k+1}\hat{\mathbf{B}}^{T}_{k}\mathbf{M}^{T}$. Denote the partitions of $\mathbf{M}=\begin{bmatrix}
        \mathbf{M}_{11}&\mathbf{M}_{12}\\\mathbf{M}_{21}&\mathbf{M}_{22}
\end{bmatrix}$. Hence, substituting for $\hat{\mathbf{B}}_{k}$ from \eqref{eqn: A3 second part}, we have
\par\noindent\small
\begin{align}
    &\mathbf{M}\hat{\mathbf{B}}_{k}\mathbf{S}_{k+1}\hat{\mathbf{B}}^{T}_{k}\mathbf{M}^{T}\nonumber\\
    &=\begin{bmatrix}
    \mathbf{M}_{11}\mathbf{B}_{k}\mathbf{S}_{k+1}\mathbf{B}_{k}^{T}\mathbf{M}_{11}^{T}&\mathbf{M}_{11}\mathbf{B}_{k}\mathbf{S}_{k+1}\mathbf{B}_{k}^{T}\mathbf{M}_{21}^{T}\\
    \mathbf{M}_{21}\mathbf{B}_{k}\mathbf{S}_{k+1}\mathbf{B}_{k}^{T}\mathbf{M}_{11}^{T}&\mathbf{M}_{21}\mathbf{B}_{k}\mathbf{S}_{k+1}\mathbf{B}_{k}^{T}\mathbf{M}_{21}^{T},\nonumber
    \end{bmatrix}
\end{align}
\normalsize
whose upper-left submatrix is $\mathbf{M}_{11}\mathbf{B}_{k}\mathbf{S}_{k+1}\mathbf{B}_{k}^{T}\mathbf{M}_{11}^{T}$. Also, from \eqref{A15}, the upper-left submatrix of $\overline{\mathbf{P}}_{z,k+1}$ is $(\mathbf{I}-\mathbf{K}_{x,k+1}\mathbf{H}_{k+1})\widetilde{\mathbf{P}}_{z11,k+1}$. Hence,
\par\noindent\small
\begin{align}
    \mathbf{P}_{x,k+1|k+1}=(\mathbf{I}-\mathbf{K}_{x,k+1}\mathbf{H}_{k+1})\widetilde{\mathbf{P}}_{z11,k+1}+\mathbf{M}_{11}\mathbf{B}_{k}\mathbf{S}_{k+1}\mathbf{B}_{k}^{T}\mathbf{M}_{11}^{T},\label{x}
\end{align}
\normalsize
where $\mathbf{M}_{11}$ is the upper-left submatrix of $\overline{\mathbf{P}}_{z,k+1}\overline{\bm{\Phi}}_{k+1,k}^{-T}\mathbf{A}_{z,k}^{T}\widetilde{\mathbf{W}}_{k}\mathbf{A}_{z,k}\overline{\bm{\Phi}}_{k+1,k}^{-1}$.

Now, we compute $\mathbf{M}_{11}$. Using \eqref{eqn: A5} in \eqref{eqn: A3 above second part}, we have
\par\noindent\small
\begin{align}
    \widetilde{\mathbf{W}}_{k}&=\mathbf{W}_{k}-\mathbf{W}_{k}\mathbf{A}_{z,k}\overline{\bm{\Phi}}_{k+1,k}^{-1}\left(\widetilde{\mathbf{Q}}^{-1}_{k}+\overline{\bm{\Phi}}_{k+1,k}^{-T}\mathbf{A}_{z,k}^{T}\mathbf{W}_{k}\mathbf{A}_{z,k}\overline{\bm{\Phi}}_{k+1,k}^{-1}\right)^{-1}\nonumber\\
    &\;\;\;\times\overline{\bm{\Phi}}_{k+1,k}^{-T}\mathbf{A}_{z,k}^{T}\mathbf{W}_{k}.\nonumber
\end{align}
\normalsize
Now, from \eqref{eqn:supp 34}, $\mathbf{A}_{z,k}^{T}\mathbf{W}_{k}\mathbf{A}_{z,k}=\mathbf{P}_{z,k}^{-1}$ such that
\par\noindent\small
\begin{align}
    &\widetilde{\mathbf{W}}_{k}=\mathbf{W}_{k}-\mathbf{W}_{k}\mathbf{A}_{z,k}\overline{\bm{\Phi}}_{k+1,k}^{-1}\left(\widetilde{\mathbf{Q}}^{-1}_{k}+\overline{\bm{\Phi}}_{k+1,k}^{-T}\mathbf{P}_{z,k}^{-1}\overline{\bm{\Phi}}_{k+1,k}^{-1}\right)^{-1}\nonumber\\
    &\;\;\;\;\;\times\overline{\bm{\Phi}}_{k+1,k}^{-T}\mathbf{A}_{z,k}^{T}\mathbf{W}_{k}\nonumber\\
    &=\mathbf{W}_{k}-\mathbf{W}_{k}\mathbf{A}_{z,k}\left[\overline{\bm{\Phi}}_{k+1,k}^{T}(\widetilde{\mathbf{Q}}^{-1}_{k}+\overline{\bm{\Phi}}_{k+1,k}^{-T}\mathbf{P}_{z,k}^{-1}\overline{\bm{\Phi}}_{k+1,k}^{-1})\overline{\bm{\Phi}}_{k+1,k}\right]^{-1}\nonumber\\
    &\;\;\;\;\;\times\mathbf{A}_{z,k}^{T}\mathbf{W}_{k}\nonumber\\
    &=\mathbf{W}_{k}-\mathbf{W}_{k}\mathbf{A}_{z,k}\left[\overline{\bm{\Phi}}_{k+1,k}^{T}\widetilde{\mathbf{Q}}^{-1}_{k}\overline{\bm{\Phi}}_{k+1,k}+\mathbf{P}_{z,k}^{-1}\right]^{-1}\mathbf{A}_{z,k}^{T}\mathbf{W}_{k}.\nonumber
\end{align}
\normalsize
Hence,
\par\noindent\small
\begin{align}
    &\mathbf{A}_{z,k}^{T}\widetilde{\mathbf{W}}_{k}\mathbf{A}_{z,k}=\mathbf{A}_{z,k}^{T}\mathbf{W}_{k}\mathbf{A}_{z,k}\nonumber\\
    &-\mathbf{A}_{z,k}^{T}\mathbf{W}_{k}\mathbf{A}_{z,k}\left[\overline{\bm{\Phi}}_{k+1,k}^{T}\widetilde{\mathbf{Q}}^{-1}_{k}\overline{\bm{\Phi}}_{k+1,k}+\mathbf{P}_{z,k}^{-1}\right]^{-1}\mathbf{A}_{z,k}^{T}\mathbf{W}_{k}\mathbf{A}_{z,k}\nonumber
\end{align}
\normalsize
Again, using $\mathbf{A}_{z,k}^{T}\mathbf{W}_{k}\mathbf{A}_{z,k}=\mathbf{P}_{z,k}^{-1}$, we have $\mathbf{A}_{z,k}^{T}\widetilde{\mathbf{W}}_{k}\mathbf{A}_{z,k}=\mathbf{P}_{z,k}^{-1}-\mathbf{P}_{z,k}^{-1}\left[\overline{\bm{\Phi}}_{k+1,k}^{T}\widetilde{\mathbf{Q}}^{-1}_{k}\overline{\bm{\Phi}}_{k+1,k}+\mathbf{P}_{z,k}^{-1}\right]^{-1}\mathbf{P}_{z,k}^{-1}$. Comparing with \eqref{eqn: A5} with $\mathbf{C}_{1}^{-1}=\mathbf{P}_{z,k}^{-1}$, $\mathbf{C}_{2}=\mathbf{I}$, $\mathbf{C}_{3}^{-1}=\overline{\bm{\Phi}}_{k+1,k}^{T}\widetilde{\mathbf{Q}}^{-1}_{k}\overline{\bm{\Phi}}_{k+1,k}$ and $\mathbf{C}_{4}=\mathbf{I}$, we have $\mathbf{A}_{z,k}^{T}\widetilde{\mathbf{W}}_{k}\mathbf{A}_{z,k}=\left(\mathbf{P}_{z,k}+(\overline{\bm{\Phi}}_{k+1,k}^{T}\widetilde{\mathbf{Q}}^{-1}_{k}\overline{\bm{\Phi}}_{k+1,k})^{-1}\right)^{-1}=(\mathbf{P}_{z,k}+\overline{\bm{\Phi}}_{k+1,k}^{-1}\widetilde{\mathbf{Q}}_{k}\overline{\bm{\Phi}}_{k+1,k}^{-T})^{-1}$. Now,
\par\noindent\small
\begin{align}
    &\overline{\bm{\Phi}}_{k+1,k}^{-T}\mathbf{A}_{z,k}^{T}\widetilde{\mathbf{W}}_{k}\mathbf{A}_{z,k}\overline{\bm{\Phi}}_{k+1,k}^{-1}\nonumber\\
    &=\overline{\bm{\Phi}}_{k+1,k}^{-T}(\mathbf{P}_{z,k}+\overline{\bm{\Phi}}_{k+1,k}^{-1}\widetilde{\mathbf{Q}}_{k}\overline{\bm{\Phi}}_{k+1,k}^{-T})^{-1}\overline{\bm{\Phi}}_{k+1,k}^{-1}\nonumber\\
    &=\left[\overline{\bm{\Phi}}_{k+1,k}(\mathbf{P}_{z,k}+\overline{\bm{\Phi}}_{k+1,k}^{-1}\widetilde{\mathbf{Q}}_{k}\overline{\bm{\Phi}}_{k+1,k}^{-T})\overline{\bm{\Phi}}_{k+1,k}^{T}\right]^{-1}\nonumber\\
    &=[\overline{\bm{\Phi}}_{k+1,k}\mathbf{P}_{z,k}\overline{\bm{\Phi}}_{k+1,k}^{T}+\widetilde{\mathbf{Q}}_{k}]^{-1}=\widetilde{\mathbf{P}}_{z,k+1}^{-1},\nonumber
\end{align}
\normalsize
using \eqref{eqn: A8 first part}. Hence, $\mathbf{M}=\overline{\mathbf{P}}_{z,k+1}\overline{\bm{\Phi}}_{k+1,k}^{-T}\mathbf{A}_{z,k}^{T}\widetilde{\mathbf{W}}_{k}\mathbf{A}_{z,k}\overline{\bm{\Phi}}_{k+1,k}^{-1}=\overline{\mathbf{P}}_{z,k+1}\widetilde{\mathbf{P}}_{z,k+1}^{-1}=\mathbf{I}-\mathbf{K}_{z,k+1}\widetilde{\mathbf{H}}_{k+1}$ using \eqref{eqn: A9 first part}. Hence, substituting for $\mathbf{K}_{z,k+1}$ from \eqref{A13} and $\widetilde{\mathbf{H}}_{k+1}$ from \eqref{eqn: supp 31}, we obtain the submatrix $\mathbf{M}_{11}=\mathbf{I}-\mathbf{K}_{x,k+1}\mathbf{H}_{k+1}$. Using this in \eqref{x}, we have
\par\noindent\small
\begin{align}
    &\mathbf{P}_{x,k+1|k+1}=(\mathbf{I}-\mathbf{K}_{x,k+1}\mathbf{H}_{k+1})\widetilde{\mathbf{P}}_{z11,k+1}\nonumber\\
    &+(\mathbf{I}-\mathbf{K}_{x,k+1}\mathbf{H}_{k+1})\mathbf{B}_{k}\mathbf{S}_{k+1}\mathbf{B}_{k}^{T}(\mathbf{I}-\mathbf{K}_{x,k+1}\mathbf{H}_{k+1})^{T}\nonumber
\end{align}
\normalsize
Using $\bm{\Sigma}^{x}_{k+1|k}=\widetilde{\mathbf{P}}_{z11,k+1}$, we obtain
\par\noindent\small
\begin{align}
    \mathbf{P}_{x,k+1|k+1}&=(\mathbf{I}-\mathbf{K}_{x,k+1}\mathbf{H}_{k+1})\nonumber\\
    &\;\;\;\times[\bm{\Sigma}^{x}_{k+1|k}+\mathbf{B}_{k}\mathbf{S}_{k+1}\mathbf{B}_{k}^{T}(\mathbf{I}-\mathbf{K}_{x,k+1}\mathbf{H}_{k+1}^{T}].\label{44}
\end{align}
\normalsize
With $\bm{\Sigma}^{x}_{k+1}=\mathbf{P}_{x,k+1|k+1}$, $\mathbf{K}^{x}_{k+1}=\mathbf{K}_{x,k+1}$ and $\bm{\Sigma}^{u}_{k}=\mathbf{S}_{k+1}$, \eqref{44} is the $\bm{\Sigma}^{x}_{k+1}$ update of forward EKF-without-DF in Section~\ref{subsubsec:forward EKF without DF}. The updates \eqref{38}, \eqref{39}, \eqref{40}, \eqref{41}, \eqref{42}, \eqref{43} and \eqref{44} are the final forward EKF-without-DF recursions.

\section{Proof of Theorem \ref{theorem: inverse kf without DF}}
\label{App-thm-inverse kf without DF}
Under the stability assumption of the forward filter, $\widetilde{\mathbf{F}}_{k}$ and $\mathbf{E}_{k}$ converge to $\overline{\mathbf{F}}$ and $\overline{\mathbf{E}}$, respectively, where $\overline{\mathbf{F}}=(\mathbf{I}-\overline{\mathbf{K}}\mathbf{H})(\mathbf{I}-\mathbf{B}\overline{\mathbf{M}}\mathbf{H})\mathbf{F}$ and $\overline{\mathbf{E}}=\mathbf{B}\overline{\mathbf{M}}-\overline{\mathbf{K}}\mathbf{HB}\overline{\mathbf{M}}+\overline{\mathbf{K}}$, obtained by replacing $\mathbf{K}_{k+1}$ and $\mathbf{M}_{k+1}$ by the limiting matrices $\overline{\mathbf{K}}$ and $\overline{\mathbf{M}}$, respectively, in $\widetilde{\mathbf{F}}_{k}$ and $\mathbf{E}_{k}$. In this limiting case, the state transition equation \eqref{eqn: state for kfwithoutdf} becomes $\hat{\mathbf{x}}_{k+1}=\overline{\mathbf{F}}\hat{\mathbf{x}}_{k}+\overline{\mathbf{E}}\mathbf{Hx}_{k+1}+\overline{\mathbf{E}}\mathbf{v}_{k+1}$.
%\par\noindent\small
%\begin{align*}
%\hat{\mathbf{x}}_{k+1}=\overline{\mathbf{F}}\hat{\mathbf{x}}_{k}+\overline{\mathbf{E}}\mathbf{Hx}_{k+1}+\overline{\mathbf{E}}\mathbf{v}_{k+1}.
%\end{align*}
%\normalsize
From \eqref{eqn: inverse kfwithoutdf covariance predict}, \eqref{eqn: inverse kfwithoutdf gain}, and \eqref{eqn: inverse kfwithoutdf covariance update} and substituting the limiting matrices, the Riccati equation $\overline{\bm{\Sigma}}_{k+1|k}=\overline{\mathbf{F}}\left[\overline{\bm{\Sigma}}_{k|k-1}-\overline{\bm{\Sigma}}_{k|k-1}\mathbf{G}^{T}(\mathbf{G}\overline{\bm{\Sigma}}_{k|k-1}\mathbf{G}^{T}+\overline{\mathbf{R}})^{-1}\mathbf{G}\overline{\bm{\Sigma}}_{k|k-1}\right]\overline{\mathbf{F}}^{T}+\overline{\bm{Q}}$ is obtained,
%\par\noindent\small
%\begin{align*}
%\overline{\bm{\Sigma}}_{k+1|k}&\\=\overline{\mathbf{F}}\left[\overline{\bm{\Sigma}}_{k|k-1}-\overline{\bm{\Sigma}}_{k|k-1}\mathbf{G}^{T}(\mathbf{G}\overline{\bm{\Sigma}}_{k|k-1}\mathbf{G}^{T}+\overline{\mathbf{R}})^{-1}\mathbf{G}\overline{\bm{\Sigma}}_{k|k-1}\right]\overline{\mathbf{F}}^{T}\\
%&\hspace{0.2cm}+\overline{\bm{Q}},
%\end{align*}
%\normalsize
where $\overline{\mathbf{Q}}=\overline{\mathbf{E}}\mathbf{R}\overline{\mathbf{E}}^{T}$. For the forward filter to be stable, covariance $\mathbf{R}$ needs to be p.d.\cite{fang2012on} and hence, $\overline{\mathbf{Q}}$ is a p.s.d. matrix. With $\overline{\mathbf{R}}$ being p.d. and the observability and controllability assumptions, $\overline{\bm{\Sigma}}_{k|k-1}$ tends to a unique p.d. matrix $\overline{\bm{\Sigma}}$ satisfying $\overline{\bm{\Sigma}}=\overline{\mathbf{F}}[\overline{\bm{\Sigma}}-\overline{\bm{\Sigma}}\mathbf{G}^{T}\left(\mathbf{G}\overline{\bm{\Sigma}}\mathbf{G}^{T}+\overline{\mathbf{R}}\right)^{-1}\mathbf{G}\overline{\bm{\Sigma}}]\overline{\mathbf{F}}^{T}+\overline{\mathbf{Q}}$,
%\par\noindent\small
%\begin{align*}
%\overline{\bm{\Sigma}}=\overline{\mathbf{F}}[\overline{\bm{\Sigma}}-\overline{\bm{\Sigma}}\mathbf{G}^{T}\left(\mathbf{G}\overline{\bm{\Sigma}}\mathbf{G}^{T}+\overline{\mathbf{R}}\right)^{-1}\mathbf{G}\overline{\bm{\Sigma}}]\overline{\mathbf{F}}^{T}+\overline{\mathbf{Q}},
%\end{align*}
%\normalsize
and $\overline{\mathbf{F}}-\overline{\mathbf{F}}\overline{\bm{\Sigma}}\mathbf{G}^{T}(\mathbf{G}\overline{\bm{\Sigma}}\mathbf{G}^{T}+\overline{\mathbf{R}})^{-1}\mathbf{G}$ has eigenvalues strictly within the unit circle. These results follow directly from the application of \cite[Proposition 4.1, Sec. 4.1]{bertsekas1995dynamic} similar to the stability and convergence results for the standard KF for linear systems \cite[Appendix E.4]{bertsekas1995dynamic}.

In this limiting case, the inverse filter prediction and update equations take the following asymptotic form
\par\noindent\small
\begin{align*}
&\doublehat{\mathbf{x}}_{k+1|k}=\overline{\mathbf{F}}\doublehat{\mathbf{x}}_{k}+\overline{\mathbf{E}}\mathbf{Hx}_{k+1},\\
&\doublehat{\mathbf{x}}_{k+1}=\doublehat{\mathbf{x}}_{k+1|k}+\overline{\bm{\Sigma}}\mathbf{G}^{T}(\mathbf{G}\overline{\bm{\Sigma}}\mathbf{G}^{T}+\overline{\mathbf{R}})^{-1}(\mathbf{a}_{k+1}-\mathbf{G}\doublehat{\mathbf{x}}_{k+1|k}).
\end{align*}
\normalsize
Denoting the inverse filter's one-step prediction error as $\overline{\mathbf{e}}_{k+1|k}\doteq\hat{\mathbf{x}}_{k+1}-\doublehat{\mathbf{x}}_{k+1|k}$, the error dynamics for the inverse filter is obtained from this asymptotic form using \eqref{eqn: linear a} as
\par\noindent\small
\begin{align*}
\overline{\mathbf{e}}_{k+1|k}&=\left(\overline{\mathbf{F}}-\overline{\mathbf{F}}\overline{\bm{\Sigma}}\mathbf{G}^{T}(\mathbf{G}\overline{\bm{\Sigma}}\mathbf{G}^{T}+\overline{\mathbf{R}})^{-1}\mathbf{G}\right)\overline{\mathbf{e}}_{k|k-1}\nonumber\\ &\hspace{4mm}-\overline{\mathbf{F}}\overline{\bm{\Sigma}}\mathbf{G}^{T}(\mathbf{G}\overline{\bm{\Sigma}}\mathbf{G}^{T}+\overline{\mathbf{R}})^{-1}\bm{\epsilon}_{k}+\overline{\mathbf{E}}\mathbf{v}_{k+1}.
\end{align*}
\normalsize

Since $\overline{\mathbf{F}}-\overline{\mathbf{F}}\overline{\bm{\Sigma}}\mathbf{G}^{T}(\mathbf{G}\overline{\bm{\Sigma}}\mathbf{G}^{T}+\overline{\mathbf{R}})^{-1}\mathbf{G}$ has eigenvalues strictly within the unit circle, this error dynamics is asymptotically stable.

\vspace{-8pt}
\section{Proof of Theorem~\ref{theorem:Forward ekf stable unknown matrix}}
\label{App-thm-Forward ekf stable unknown matrix}
For simplicity, we consider the case of $n\geq p$ with $\mathbf{U}^{xy}_{k+1}\in\mathbb{R}^{n\times n}$. It is trivial to show that the proof remains valid for $n<p$ as well. Using the expressions for $\bm{\Sigma}^{xy}_{k+1}$ and $\mathbf{S}_{k+1}$, we have
\par\noindent\small
\begin{align*}
\mathbf{K}_{k+1}&=\bm{\Sigma}_{k+1|k}\mathbf{U}^{xy}_{k+1}\mathbf{H}_{k+1}^{T}\mathbf{U}^{y}_{k+1}\\
&\hspace{1.0cm}\times\left(\mathbf{U}^{y}_{k+1}\mathbf{H}_{k+1}\bm{\Sigma}_{k+1|k}\mathbf{H}_{k+1}^{T}\mathbf{U}^{y}_{k+1}+\hat{\mathbf{R}}_{k+1}\right)^{-1},\\
\bm{\Sigma}_{k+1}&=\bm{\Sigma}_{k+1|k}-\bm{\Sigma}_{k+1|k}\mathbf{U}^{xy}_{k+1}\mathbf{H}_{k+1}^{T}\mathbf{U}^{y}_{k+1}\\
&\times\left(\mathbf{U}^{y}_{k+1}\mathbf{H}_{k+1}\bm{\Sigma}_{k+1|k}\mathbf{H}_{k+1}^{T}\mathbf{U}^{y}_{k+1}+\hat{\mathbf{R}}_{k+1}\right)^{-1}\\
&\times\mathbf{U}^{y}_{k+1}\mathbf{H}_{k+1}(\mathbf{U}^{xy}_{k+1})^{T}\bm{\Sigma}_{k+1|k}.
\end{align*}
\normalsize

Define $V_{k}(\widetilde{\mathbf{x}}_{k|k-1})=\widetilde{\mathbf{x}}_{k|k-1}^{T}\bm{\Sigma}_{k|k-1}^{-1}\widetilde{\mathbf{x}}_{k|k-1}$. Using the bounds assumed on $\bm{\Sigma}_{k|k-1}$, we have for all $k\geq 0$
\par\noindent\small
\begin{align*}
    \frac{1}{\overline{\sigma}}\|\widetilde{\mathbf{x}}_{k|k-1}\|_{2}^{2}\leq V_{k}(\widetilde{\mathbf{x}}_{k|k-1})\leq\frac{1}{\underline{\sigma}}\|\widetilde{\mathbf{x}}_{k|k-1}\|_{2}^{2}.
\end{align*}
\normalsize
Hence, the first condition of Lemma \ref{lemma:exponential boundedness} is satisfied with $v_{\textrm{min}}=1/\overline{\sigma}$ and $v_{\textrm{max}}=1/\underline{\sigma}$.

Using \eqref{eqn:forward EKF prediction error dynamics} and the independence of noise terms, we have
%\color{blue}
\par\noindent\small%footnotesize
\begin{align}
&\mathbb{E}\left[V_{k+1}(\widetilde{\mathbf{x}}_{k+1|k})\vert\widetilde{\mathbf{x}}_{k|k-1}\right]\nonumber\\
&= \widetilde{\mathbf{x}}_{k|k-1}^{T}(\mathbf{U}^{x}_{k}\mathbf{F}_{k}(\mathbf{I}-\mathbf{K}_{k}\mathbf{U}^{y}_{k}\mathbf{H}_{k}))^{T}\nonumber\\
&\hspace{3mm} \times \bm{\Sigma}_{k+1|k}^{-1}(\mathbf{U}^{x}_{k}\mathbf{F}_{k}(\mathbf{I}-\mathbf{K}_{k}\mathbf{U}^{y}_{k}\mathbf{H}_{k}))\widetilde{\mathbf{x}}_{k|k-1}\nonumber\\
& +\mathbb{E}\left[\mathbf{v}_{k}^{T}(\mathbf{U}^{x}_{k}\mathbf{F}_{k}\mathbf{K}_{k})^{T}\bm{\Sigma}_{k+1|k}^{-1}(\mathbf{U}^{x}_{k}\mathbf{F}_{k}\mathbf{K}_{k})\mathbf{v}_{k}\vert\widetilde{\mathbf{x}}_{k|k-1}\right]\nonumber\\
& +\mathbb{E}\left[\mathbf{w}_{k}^{T}\bm{\Sigma}_{k+1|k}^{-1}\mathbf{w}_{k}\vert\widetilde{\mathbf{x}}_{k|k-1}\right].\label{eqn: Vk expectation}
\end{align}
\normalsize
%\color{black}
The difference of two matrices $\mathbf{A}-\mathbf{B}$ is invertible if maximum singular value of $\mathbf{B}$ is strictly less than the minimum singular value of $\mathbf{A}$.  Using the assumed bounds, we have $\|\mathbf{K}_{k}\|\leq\overline{k}=(\overline{\sigma}\overline{\gamma}\overline{h}\overline{\beta})/\hat{r}$. Hence, maximum singular value of $\mathbf{K}_{k}\mathbf{U}^{y}_{k}\mathbf{H}_{k}$ is upper-bounded by $(\overline{\sigma}\overline{\gamma}\overline{h}^{2}\overline{\beta}^{2})/\hat{r}$  and the inequality \eqref{eqn:inequality on constants} guarantees that $\mathbf{I}-\mathbf{K}_{k}\mathbf{U}^{y}_{k}\mathbf{H}_{k}$ is invertible (singular value of $\mathbf{I}$ is 1) such that
%\color{blue}
\par\noindent\small%footnotesize
\begin{align*}
&\bm{\Sigma}_{k+1|k} \nonumber\\
&=\mathbf{U}^{x}_{k}\mathbf{F}_{k}(\mathbf{I}-\mathbf{K}_{k}\mathbf{U}^{y}_{k}\mathbf{H}_{k}) (\bm{\Sigma}_{k|k-1}+(\mathbf{U}^{x}_{k}\mathbf{F}_{k}(\mathbf{I}-\mathbf{K}_{k}\mathbf{U}^{y}_{k}\mathbf{H}_{k}))^{-1}\\
&\hspace{3mm} \times\hat{\mathbf{Q}}_{k}((\mathbf{U}^{x}_{k}\mathbf{F}_{k}(\mathbf{I}-\mathbf{K}_{k}\mathbf{U}^{y}_{k}\mathbf{H}_{k}))^{-1})^{T}) (\mathbf{I}-\mathbf{K}_{k}\mathbf{U}^{y}_{k}\mathbf{H}_{k})^{T}\mathbf{F}_{k}^{T}\mathbf{U}^{x}_{k},
\end{align*}
\normalsize
%\color{black}
because $\mathbf{U}^{x}_{k}$ and $\mathbf{F}_{k}$ are also assumed to be invertible. Again with the assumed bounds, we have $\|\mathbf{U}^{x}_{k}\mathbf{F}_{k}(\mathbf{I}-\mathbf{K}_{k}\mathbf{U}^{y}_{k}\mathbf{H}_{k})\|\leq\overline{\alpha}\overline{f}(1+\overline{k}\overline{\beta}\overline{h})$ which implies
\par\noindent\small
\begin{align*}
    (\mathbf{U}^{x}_{k}\mathbf{F}_{k}(\mathbf{I}-\mathbf{K}_{k}\mathbf{U}^{y}_{k}\mathbf{H}_{k}))^{-1}\hat{\mathbf{Q}}_{k}((\mathbf{U}^{x}_{k}\mathbf{F}_{k}(\mathbf{I}-\mathbf{K}_{k}\mathbf{U}^{y}_{k}\mathbf{H}_{k}))^{-1})^{T}\\
    \succeq\frac{\hat{q}}{(\overline{\alpha}\overline{f}(1+\overline{k}\overline{\beta}\overline{h}))^{2}}\mathbf{I}.
\end{align*}
\normalsize
Using this bound in the expression of $\bm{\Sigma}_{k+1|k}$ as in \cite{li2012stochastic_ukf}, we have
\par\noindent\small
\begin{align*}
(\mathbf{U}^{x}_{k}\mathbf{F}_{k}(\mathbf{I}-\mathbf{K}_{k}\mathbf{U}^{y}_{k}\mathbf{H}_{k}))^{T}\bm{\Sigma}_{k+1|k}^{-1}(\mathbf{U}^{x}_{k}\mathbf{F}_{k}(\mathbf{I}-\mathbf{K}_{k}\mathbf{U}^{y}_{k}\mathbf{H}_{k}))\\
\preceq(1-\lambda)\bm{\Sigma}_{k|k-1}^{-1},
\end{align*}
\normalsize
where $1-\lambda=\left(1+\frac{\hat{q}}{\overline{\sigma}(\overline{\alpha}\overline{f}(1+\overline{k}\overline{\beta}\overline{h}))^{2}}\right)^{-1}$ with $0<\lambda<1$. The last two expectation terms in \eqref{eqn: Vk expectation} can be bounded by $\mu=(\overline{r}p\overline{\alpha}^{2}\overline{f}^{2}\overline{k}^{2}/\underline{\sigma})+(\overline{q}n/\underline{\sigma})>0$ following similar steps as in \cite{li2012stochastic_ukf} such that
\par\noindent\small
\begin{align*}
\mathbb{E}\left[ V_{k+1}(\widetilde{\mathbf{x}}_{k+1|k})|\widetilde{\mathbf{x}}_{k|k-1}\right]-V_{k}(\widetilde{\mathbf{x}}_{k|k-1})\leq-\lambda V_{k}(\widetilde{\mathbf{x}}_{k|k-1})+\mu.
\end{align*}
\normalsize
Hence, the second condition of Lemma \ref{lemma:exponential boundedness} is also satisfied and the prediction error $\widetilde{\mathbf{x}}_{k|k-1}$ is exponentially bounded in mean-squared sense and bounded with probability one.

Furthermore, with the bounds assumed on various matrices, it is straightforward to show that
\par\noindent\small
\begin{align*}
\mathbb{E}\left[\|\widetilde{\mathbf{x}}_{k}\|^{2}_{2}\right]\leq(1+\overline{k}\overline{\beta}\overline{h})^{2}\mathbb{E}\left[\|\widetilde{\mathbf{x}}_{k|k-1}\|^{2}_{2}\right]+\overline{k}^{2}\overline{r}p.
\end{align*}
\normalsize
Finally, the exponential boundedness of $\widetilde{\mathbf{x}}_{k|k-1}$ leads to $\widetilde{\mathbf{x}}_{k}$ also being exponentially bounded in mean-squared sense as well as bounded with probability one.

\vspace{-8pt}
\section{Proof of Theorem~\ref{theorem: inverse EKF stable unknown matrix}}
\label{App-thm-inverse EKF stable unknown matrix}
We will show that the I-EKF's dynamics also satisfies the assumptions of Theorem \ref{theorem:Forward ekf stable unknown matrix}. For this, the following conditions \textbf{C1}-\textbf{C13} need to hold true for all $k\geq 0$ for some real positive constants $\overline{a},\overline{g},\overline{b},\overline{c},\overline{d},\hat{q},\overline{\epsilon},\hat{c},\hat{d},\underline{p},\overline{p}$. %\textcolor{red}{why can't you write these in sentences separated by a comma? Will save space.}
\begin{description}
    \item [C1]
    $\|\widetilde{\mathbf{F}}^{x}_{k}\|\leq\overline{a}$; 
    \item [C2]
    $\|\overline{\mathbf{U}}^{x}_{k}\|\leq\overline{b}$; 
    \item [C3]
    $\overline{\mathbf{U}}^{x}_{k}$ is non-singular; 
    \item [C4]
    $\widetilde{\mathbf{F}}^{x}_{k}$ is non-singular; 
    \item [C5]
    $\overline{\mathbf{Q}}_{k}\preceq\widetilde{q}\mathbf{I}$; 
    \item [C6]
    $\|\mathbf{G}_{k}\|\leq\overline{g}$; 
    \item [C7]
    $\|\overline{\mathbf{U}}^{a}_{k}\|\leq\overline{c}$; 
    \item [C8]
    $\|\overline{\mathbf{U}}^{xa}_{k}\|\leq\overline{d}$; 
    \item [C9]
    $\overline{\mathbf{R}}_{k}\preceq\overline{\epsilon}\mathbf{I}$;
    \item [C10]
    $\hat{c}\mathbf{I}\preceq\hat{\overline{\mathbf{Q}}}_{k}$; 
    \item [C11]
    $\hat{d}\mathbf{I}\preceq\hat{\overline{\mathbf{R}}}_{k}$; 
    \item [C12]
    $\underline{p}\mathbf{I}\preceq\overline{\bm{\Sigma}}_{k|k-1}\preceq\overline{p}\mathbf{I}$; and 
    \item [C13]
    the constants satisfy the inequality $\overline{p}\overline{d}\overline{g}^{2}\overline{c}^{2}<\hat{d}$.
\end{description}

The conditions \textbf{C6-C13} are assumed to hold true in Theorem \ref{theorem: inverse EKF stable unknown matrix}. Next, we prove that under the assumptions of Theorem \ref{theorem: inverse EKF stable unknown matrix}, \textbf{C1}-\textbf{C5} are also satisfied for the I-EKF's error dynamics such that Theorem \ref{theorem:Forward ekf stable unknown matrix} is applicable for the I-EKF as well. From the I-EKF's state transition \eqref{eqn: inverse ekf state transition}, the Jacobians $\widetilde{\mathbf{F}}^{x}_{k}=\mathbf{F}_{k}-\mathbf{K}_{k+1}\mathbf{H}_{k+1}\mathbf{F}_{k}$ and $\widetilde{\mathbf{F}}^{v}_{k}=\mathbf{K}_{k+1}$ such that $\overline{\mathbf{Q}}_{k}=\mathbf{K}_{k+1}\mathbf{R}_{k+1}\mathbf{K}_{k+1}^{T}$.

For \textbf{C1}, using $\|\mathbf{K}_{k+1}\|\leq\overline{k}$ (as proved in Theorem \ref{theorem:Forward ekf stable unknown matrix}) and the bounds on $\mathbf{F}_{k}$ and $\mathbf{H}_{k+1}$ from the assumptions of Theorem \ref{theorem:Forward ekf stable unknown matrix}, it is trivial to show that $\|\widetilde{\mathbf{F}}^{x}_{k}\|=\|\mathbf{F}_{k}-\mathbf{K}_{k+1}\mathbf{H}_{k+1}\mathbf{F}_{k}\|\leq\overline{f}+\overline{k}\overline{h}\overline{f}$.
%\par\noindent\small
%\begin{align*}
 %   \|\widetilde{\mathbf{F}}^{x}_{k}\|=\|\mathbf{F}_{k}-\mathbf{K}_{k+1}\mathbf{H}_{k+1}\mathbf{F}_{k}\|\leq\overline{f}+\overline{k}\overline{h}\overline{f}.
%\end{align*}
%\normalsize
Hence, \textbf{C1} is satisfied with $\overline{a}=\overline{f}+\overline{k}\overline{h}\overline{f}$.

For \textbf{C2}-\textbf{C4}, consider the unknown matrix $\overline{\mathbf{U}}^{x}_{k}$ introduced to account for the residuals in linearization of $\widetilde{f}_{k}(\cdot)$. Let $\hat{\widetilde{\mathbf{x}}}_{k+1|k}$ and $\hat{\widetilde{\mathbf{x}}}_{k}$ denote the state prediction error and state estimation error of I-EKF. Similar to forward EKF with the introduction of the unknown matrix, we have
\par\noindent\small
\begin{align}
    \hat{\widetilde{\mathbf{x}}}_{k+1|k}=\overline{\mathbf{U}}^{x}_{k}(\mathbf{F}_{k}-\mathbf{K}_{k+1}\mathbf{H}_{k+1}\mathbf{F}_{k})\hat{\widetilde{\mathbf{x}}}_{k}+\mathbf{K}_{k+1}\mathbf{v}_{k+1}\label{eqn:inverse EKF error unknown alpha}.
\end{align}
\normalsize
Also, $\hat{\widetilde{\mathbf{x}}}_{k+1|k}=f(\hat{\mathbf{x}}_{k})-f(\doublehat{\mathbf{x}}_{k})-\mathbf{K}_{k+1}(h(f(\hat{\mathbf{x}}_{k}))-h(f(\doublehat{\mathbf{x}}_{k})))+\mathbf{K}_{k+1}\mathbf{v}_{k+1}$.
%\par\noindent\small
%\begin{align*}
%\hat{\widetilde{\mathbf{x}}}_{k+1|k}&=f(\hat{\mathbf{x}}_{k})-f(\doublehat{\mathbf{x}}_{k})-\mathbf{K}_{k+1}(h(f(\hat{\mathbf{x}}_{k}))-h(f(\doublehat{\mathbf{x}}_{k})))\\
%&\hspace{0.5cm}+\mathbf{K}_{k+1}\mathbf{v}_{k+1}.
%\end{align*}
%\normalsize
Using the unknown matrices $\mathbf{U}^{x}_{k}$ and $\mathbf{U}^{y}_{k}$ introduced in the linearization of $f(\cdot)$ and $h(\cdot)$, respectively, we have
\par\noindent\small
\begin{align*}
    \hat{\widetilde{\mathbf{x}}}_{k+1|k}=(\mathbf{U}^{x}_{k}\mathbf{F}_{k}-\mathbf{K}_{k+1}\mathbf{U}^{y}_{k+1}\mathbf{H}_{k+1}\mathbf{U}^{x}_{k}\mathbf{F}_{k})\hat{\widetilde{\mathbf{x}}}_{k}+\mathbf{K}_{k+1}\mathbf{v}_{k+1}.
\end{align*}
\normalsize
Comparing with \eqref{eqn:inverse EKF error unknown alpha}, we have
\par\noindent\small
\begin{align}
    \overline{\mathbf{U}}^{x}_{k}(\mathbf{I}-\mathbf{K}_{k+1}\mathbf{H}_{k+1})\mathbf{F}_{k}=(\mathbf{I}-\mathbf{K}_{k+1}\mathbf{U}^{y}_{k+1}\mathbf{H}_{k+1})\mathbf{U}^{x}_{k}\mathbf{F}_{k}\label{eqn: unknown alpha for inverse EKF}.
\end{align}
\normalsize
With the additional assumption of $\underline{r}\mathbf{I}\preceq\mathbf{R}_{k}$ and using matrix inversion lemma as in proof of \cite[Lemma 3.1]{reif1999stochastic}, we have
\par\noindent\small
\begin{align*}
    (\mathbf{I}-\mathbf{K}_{k+1}\mathbf{H}_{k+1})\bm{\Sigma}_{k+1|k}=\left(\bm{\Sigma}_{k+1|k}^{-1}+\mathbf{H}_{k+1}^{T}\mathbf{R}_{k+1}^{-1}\mathbf{H}_{k+1}\right)^{-1}.
\end{align*}
\normalsize
Since $\bm{\Sigma}_{k+1|k}$ is invertible by the assumptions of Theorem \ref{theorem:Forward ekf stable unknown matrix}, $\mathbf{I}-\mathbf{K}_{k+1}\mathbf{H}_{k+1}$ is invertible for all $k\geq 0$ and
\par\noindent\small
\begin{align*}
    (\mathbf{I}-\mathbf{K}_{k+1}\mathbf{H}_{k+1})^{-1}=\mathbf{I}+\bm{\Sigma}_{k+1|k}\mathbf{H}_{k+1}^{T}\mathbf{R}_{k+1}^{-1}\mathbf{H}_{k+1}.
\end{align*}
\normalsize
With the bounds assumed on various matrices, we have $ \|(\mathbf{I}-\mathbf{K}_{k+1}\mathbf{H}_{k+1})^{-1}\|\leq 1+\frac{\overline{\sigma}\overline{h}^{2}}{\underline{r}}$.
%\par\noindent\small
%\begin{align*}
 %   \|(\mathbf{I}-\mathbf{K}_{k+1}\mathbf{H}_{k+1})^{-1}\|\leq 1+\frac{\overline{\sigma}\overline{h}^{2}}{\underline{r}}.
%\end{align*}
%\normalsize
Furthermore, using this bound and the invertibility of $\mathbf{I}-\mathbf{K}_{k+1}\mathbf{H}_{k+1}$ in \eqref{eqn: unknown alpha for inverse EKF}, it is straightforward to show that $\overline{\mathbf{U}}^{x}_{k}=(\mathbf{I}-\mathbf{K}_{k+1}\mathbf{U}^{y}_{k+1}\mathbf{H}_{k+1})\mathbf{U}^{x}_{k}(\mathbf{I}-\mathbf{K}_{k+1}\mathbf{H}_{k+1})^{-1}$ is non-singular (both $\mathbf{U}^{x}_{k}$ and $\mathbf{I}-\mathbf{K}_{k+1}\mathbf{U}^{y}_{k+1}\mathbf{H}_{k+1}$ are invertible under the assumptions of Theorem \ref{theorem:Forward ekf stable unknown matrix}) and satisfies $\|\overline{\mathbf{U}}^{x}_{k}\|\leq\overline{\alpha}(1+\overline{k}\overline{\beta}\overline{h})(1+(\overline{\sigma}\overline{h}^{2})/\underline{r})$.
%\par\noindent\small
%\begin{align*}
 %   \|\overline{\mathbf{U}}^{x}_{k}\|\leq\overline{\alpha}(1+\overline{k}\overline{\beta}\overline{h})(1+(\overline{\sigma}\overline{h}^{2})/\underline{r}).
%\end{align*}
%\normalsize
Also, since both $\mathbf{I}-\mathbf{K}_{k+1}\mathbf{H}_{k+1}$ and $\mathbf{F}_{k}$ are invertible, $\widetilde{\mathbf{F}}^{x}_{k}=\mathbf{F}_{k}(\mathbf{I}-\mathbf{K}_{k+1}\mathbf{H}_{k+1})$ is non-singular. Hence, \textbf{C2}-\textbf{C4} are also satisfied with $\overline{b}=\overline{\alpha}(1+\overline{k}\overline{\beta}\overline{h})(1+(\overline{\sigma}\overline{h}^{2})/\underline{r})$.

For \textbf{C5}, using the upper bound on $\mathbf{R}_{k}$ from assumptions of Theorem \ref{theorem:Forward ekf stable unknown matrix}, we have $\overline{\mathbf{Q}}_{k}\preceq\overline{r}\mathbf{K}_{k+1}\mathbf{K}_{k+1}^{T}$. Since, $\|\mathbf{K}_{k+1}\|\leq\overline{k}$, the maximum eigenvalue of $\mathbf{K}_{k+1}\mathbf{K}_{k+1}^{T}$ is bounded by $\overline{k}^{2}$ such that $\overline{\mathbf{Q}}_{k}\preceq\overline{k}^{2}\overline{r}\mathbf{I}$. Hence, \textbf{C5} is satisfied with $\title{q}=\overline{k}^{2}\overline{r}$.

\vspace{-8pt}
\section{Proof of Theorem~\ref{theorem: inverse ekf stable Reif}}
\label{App-thm-inverse ekf stable Reif}
We will show that the error dynamics of the I-EKF given by \eqref{eqn: inverse ekf error} satisfies the following conditions for all $k\geq 0$ for some real positive constants $\underline{c},\kappa_{\bar{\phi}}, \epsilon_{\bar{\phi}}$.
\begin{description}
    \item [C1]
    $\underline{c}\mathbf{I}\preceq\overline{\mathbf{Q}}_{k}$.
    \item [C2]
    $\widetilde{\mathbf{F}}^{x}_{k}$ is non-singular matrix for all $k\geq 0$.
    \item [C3]
    $\|\overline{\phi}_{k}(\hat{\mathbf{x}},\doublehat{\mathbf{x}})\|_{2}\leq\kappa_{\bar{\phi}}\|\hat{\mathbf{x}}-\doublehat{\mathbf{x}}\|^{2}_{2}$ for all $\|\hat{\mathbf{x}}-\doublehat{\mathbf{x}}\|_{2}\leq\epsilon_{\bar{\phi}}$ for some $\kappa_{\bar{\phi}}>0$ and $\epsilon_{\bar{\phi}}>0$.
\end{description}

All other conditions of Theorem \ref{theorem: ekf stable Reif} can be proved to hold true for the I-EKF's error dynamics under the assumptions of Theorem \ref{theorem: inverse ekf stable Reif} following similar approach as in proof of Theorem \ref{theorem: inverse EKF stable unknown matrix}, such that the estimation error given by \eqref{eqn: inverse ekf error} is exponentially bounded in mean-squared sense and bounded with probability one provided that the estimation error is bounded with $\overline{\epsilon}>0$ where $\overline{\epsilon}$ depends on the various bounds in the same manner as $\epsilon$ depends in the forward filter case.

For \textbf{C1}, using the bound on $\mathbf{R}_{k}$ from one of the assumptions of Theorem \ref{theorem: ekf stable Reif}, we have $\overline{\mathbf{Q}}_{k}=\mathbf{K}_{k}\mathbf{R}_{k}\mathbf{K}_{k}^{T}\succeq\underline{r}\mathbf{K}_{k}\mathbf{K}_{k}^{T}$.
%\par\noindent\small
%\begin{align*}
%\overline{\mathbf{Q}}_{k}=\mathbf{K}_{k}\mathbf{R}_{k}\mathbf{K}_{k}^{T}\succeq\underline{r}\mathbf{K}_{k}\mathbf{K}_{k}^{T}.
%\end{align*}
%\normalsize
Substituting for $\mathbf{K}_{k}$, we have
\par\noindent\small
\begin{align*}
\mathbf{K}_{k}\mathbf{K}_{k}^{T}=\mathbf{F}_{k}\bm{\Sigma}_{k}\mathbf{H}_{k}^{T}(\mathbf{H}_{k}\bm{\Sigma}_{k}\mathbf{H}_{k}^{T}+\mathbf{R}_{k})^{-2}\mathbf{H}_{k}\bm{\Sigma}_{k}\mathbf{F}_{k}^{T}.
\end{align*}
\normalsize
With the assumption that $\mathbf{H}_{k}$ is full column rank, $\mathbf{K}_{k}\mathbf{K}_{k}^{T}$ is p.d. as $\mathbf{F}_{k}$ is assumed to be non-singular in Theorem \ref{theorem: ekf stable Reif}. Hence, there exists a constant $\widetilde{q}>0$ which is the minimum eigenvalue of $\mathbf{K}_{k}\mathbf{K}_{k}^{T}$ such that $\mathbf{K}_{k}\mathbf{K}_{k}^{T}\succeq\widetilde{q}\mathbf{I}$ and $\overline{\mathbf{Q}}_{k}\succeq\underline{r}\widetilde{q}\mathbf{I}$. Hence, \textbf{C1} is satisfied with $\underline{c}=\underline{r}\widetilde{q}$.

For \textbf{C2}, $\widetilde{\mathbf{F}}^{x}_{k}=\mathbf{F}_{k}-\mathbf{K}_{k}\mathbf{H}_{k}$ is proved to be invertible for all $k\geq 0$ as an intermediate result in the proof of Theorem \ref{theorem: ekf stable Reif} in \cite[Lemma 3.1]{reif1999stochastic}.

For \textbf{C3}, using $\|\mathbf{K}_{k}\|\leq(\overline{f}\overline{\sigma}\overline{h}/\underline{r})$ (proved in \cite[Lemma 3.1]{reif1999stochastic}) and the bounds on functions $\phi(\cdot)$ and $\chi(\cdot)$ from the assumptions of Theorem \ref{theorem: ekf stable Reif}, we have $\|\overline{\phi}_{k}(\hat{\mathbf{x}},\doublehat{\mathbf{x}})\|_{2}\leq\|\phi(\hat{\mathbf{x}},\doublehat{\mathbf{x}})\|_{2}+\frac{\overline{f}\overline{\sigma}\overline{h}}{\underline{r}}\|\chi(\hat{\mathbf{x}},\doublehat{\mathbf{x}})\|_{2}\leq\left(\kappa_{\phi}+\frac{\overline{f}\overline{\sigma}\overline{h}}{\underline{r}}\kappa_{\chi}\right)\|\hat{\mathbf{x}}-\doublehat{\mathbf{x}}\|_{2}^{2}$,
%\par\noindent\small
%\begin{align*}
%\|\overline{\phi}_{k}(\hat{\mathbf{x}},\doublehat{\mathbf{x}})\|_{2}\leq\|\phi(\hat{\mathbf{x}},\doublehat{\mathbf{x}})\|_{2}+\frac{\overline{f}\overline{\sigma}\overline{h}}{\underline{r}}\|\chi(\hat{\mathbf{x}},\doublehat{\mathbf{x}})\|_{2}\leq\left(\kappa_{\phi}+\frac{\overline{f}\overline{\sigma}\overline{h}}{\underline{r}}\kappa_{\chi}\right)\|\hat{\mathbf{x}}-\doublehat{\mathbf{x}}\|_{2}^{2},
%\end{align*}
%\normalsize
for $\|\hat{\mathbf{x}}-\doublehat{\mathbf{x}}\|_{2}\leq \textrm{min}(\epsilon_{\phi},\epsilon_{\chi})$. Hence, \textbf{C3} is satisfied with $\kappa_{\bar{\phi}}=\kappa_{\phi}+(\overline{f}\overline{\sigma}\overline{h}/\underline{r})\kappa_{\chi}$ and $\epsilon_{\bar{\phi}}=\textrm{min}(\epsilon_{\phi},\epsilon_{\chi})$.

\section{Proof of Theorem~\ref{thm:consistency}}
\label{App-thm-inverse EKF consistency}
We prove the theorem by the principle of mathematical induction. Define the prediction and estimation errors as $\hat{\widetilde{\mathbf{x}}}_{k|k-1}\doteq\hat{\mathbf{x}}_{k}-\doublehat{\mathbf{x}}_{k|k-1}$ and $\hat{\widetilde{\mathbf{x}}}_{k}\doteq\hat{\mathbf{x}}_{k}-\doublehat{\mathbf{x}}_{k}$, respectively. Assume $\mathbb{E}[\hat{\widetilde{\mathbf{x}}}_{k}\hat{\widetilde{\mathbf{x}}}_{k}^{T}]\preceq\overline{\bm{\Sigma}}_{k}$. We show that the inequality also holds for $(k+1)$-th time step. Substituting \eqref{eqn:SLT state transition} in the I-EKF's recursions, we have $\doublehat{\mathbf{x}}_{k+1|k}=\mathbf{U}^{xv}_{k}\overline{\mathbf{F}}^{x}_{k}\doublehat{\mathbf{x}}_{k}$ and $\overline{\bm{\Sigma}}_{k+1|k}=\mathbf{U}^{xv}_{k}\overline{\mathbf{F}}^{x}_{k}\overline{\bm{\Sigma}}_{k}(\overline{\mathbf{F}}^{x}_{k})^{T}\mathbf{U}^{xv}_{k}+\mathbf{U}^{xv}_{k}\overline{\mathbf{F}}^{v}_{k}\mathbf{R}_{k+1}(\overline{\mathbf{F}}^{v}_{k})^{T}\mathbf{U}^{xv}_{k}$. Hence, $\hat{\widetilde{\mathbf{x}}}_{k+1|k}=\mathbf{U}^{xv}_{k}\overline{\mathbf{F}}^{x}_{k}\hat{\widetilde{\mathbf{x}}}_{k}+\mathbf{U}^{xv}_{k}\overline{\mathbf{F}}^{v}_{k}\mathbf{v}_{k+1}$ such that $\mathbb{E}[\hat{\widetilde{\mathbf{x}}}_{k+1|k}\hat{\widetilde{\mathbf{x}}}_{k+1|k}^{T}]=\mathbf{U}^{xv}_{k}\overline{\mathbf{F}}^{x}_{k}\mathbb{E}[\hat{\widetilde{\mathbf{x}}}_{k}\hat{\widetilde{\mathbf{x}}}_{k}^{T}](\overline{\mathbf{F}}^{x}_{k})^{T}\mathbf{U}^{xv}_{k}+\mathbf{U}^{xv}_{k}\overline{\mathbf{F}}^{v}_{k}\mathbf{R}_{k+1}(\overline{\mathbf{F}}^{v}_{k})^{T}\mathbf{U}^{xv}_{k}$. Since $\mathbb{E}[\hat{\widetilde{\mathbf{x}}}_{k}\hat{\widetilde{\mathbf{x}}}_{k}^{T}]\preceq\overline{\bm{\Sigma}}_{k}$, we have $\mathbb{E}[\hat{\widetilde{\mathbf{x}}}_{k+1|k}\hat{\widetilde{\mathbf{x}}}_{k+1|k}^{T}]\preceq\overline{\bm{\Sigma}}_{k+1|k}$. Similarly, using \eqref{eqn:SLT observation}, we predict observation $\mathbf{a}_{k+1}$ as $\hat{\mathbf{a}}_{k+1|k}=\mathbf{U}^{a}_{k+1}\overline{\mathbf{G}}_{k+1}\doublehat{\mathbf{x}}_{k+1|k}$ and $\overline{\mathbf{S}}_{k+1}=\mathbf{U}^{a}_{k+1}\overline{\mathbf{G}}_{k+1}\overline{\bm{\Sigma}}_{k+1|k}\overline{\mathbf{G}}_{k+1}^{T}\mathbf{U}^{a}_{k+1}+\overline{\mathbf{R}}_{k+1}$ with I-EKF's gain matrix $\overline{\mathbf{K}}_{k+1}=\overline{\bm{\Sigma}}_{k+1|k}\overline{\mathbf{G}}_{k+1}^{T}\mathbf{U}^{a}_{k+1}\overline{\mathbf{S}}_{k+1}^{-1}$. Again, $\hat{\widetilde{\mathbf{x}}}_{k+1}=(\mathbf{I}-\overline{\mathbf{K}}_{k+1}\mathbf{U}^{a}_{k+1}\overline{\mathbf{G}}_{k+1})\hat{\widetilde{\mathbf{x}}}_{k+1|k}-\overline{\mathbf{K}}_{k+1}\bm{\epsilon}_{k+1}$, which implies $\mathbb{E}[\hat{\widetilde{\mathbf{x}}}_{k+1}\hat{\widetilde{\mathbf{x}}}_{k+1}^{T}]=(\mathbf{I}-\overline{\mathbf{K}}_{k+1}\mathbf{U}^{a}_{k+1}\overline{\mathbf{G}}_{k+1})\mathbb{E}[\hat{\widetilde{\mathbf{x}}}_{k+1|k}\hat{\widetilde{\mathbf{x}}}_{k+1|k}^{T}](\mathbf{I}-\overline{\mathbf{K}}_{k+1}\mathbf{U}^{a}_{k+1}\overline{\mathbf{G}}_{k+1})^{T}+\overline{\mathbf{K}}_{k+1}\overline{\mathbf{R}}_{k+1}\overline{\mathbf{K}}_{k+1}^{T}$. Finally, using $\mathbb{E}[\hat{\widetilde{\mathbf{x}}}_{k+1|k}\hat{\widetilde{\mathbf{x}}}_{k+1|k}^{T}]\preceq\overline{\bm{\Sigma}}_{k+1|k}$, we have $\mathbb{E}[\hat{\widetilde{\mathbf{x}}}_{k+1}\hat{\widetilde{\mathbf{x}}}_{k+1}^{T}]\preceq\overline{\bm{\Sigma}}_{k+1}$.

%\clearpage
%\balance
\bibliographystyle{IEEEtran}
\bibliography{main}
\end{document}